\documentclass{article}

\usepackage{a4wide}
\usepackage{cite}
\usepackage[utf8]{inputenc}
\usepackage[T1]{fontenc}

\usepackage[colorlinks=true, pdfstartview=FitV, linkcolor=blue,citecolor=blue, urlcolor=blue]{hyperref}

\usepackage[french,english]{babel}
\usepackage{amsmath}
\usepackage{amscd}
\usepackage{amssymb}
\usepackage{stmaryrd}
\usepackage{amsrefs}
\usepackage{amsthm}
\usepackage{cases}
\usepackage{framed}
\usepackage{graphicx}
\usepackage{latexsym}
\usepackage{color}
\usepackage{array,multirow,makecell}
\usepackage[capitalise]{cleveref}

\usepackage{enumerate}
\usepackage{enumitem}  

\usepackage{bbm}

\setcellgapes{1pt}
\makegapedcells

\numberwithin{equation}{section}

\theoremstyle{plain} 
\newtheorem{thm}{Theorem}[section]
\newtheorem{prop}[thm]{Proposition}
\newtheorem{lem}[thm]{Lemma}

\theoremstyle{definition}

\theoremstyle{remark} 
\newtheorem{rmk}[thm]{Remark}

\newcommand{\R}{\mathbb{R}}

\newcommand{\N}{\mathcal{N}}
\renewcommand{\H}{\mathcal{H}}

\newcommand{\tX}{\widetilde{X}}
\newcommand{\tU}{\widetilde{U}}
\newcommand{\tP}{\widetilde{P}}
\newcommand{\tK}{\widetilde{K}}

\newcommand{\bx}{\mathbf{\rm x}}
\newcommand{\tx}{\mathbf{\rm \tilde x}}

\newcommand{\by}{\mathbf{\rm y}}
\newcommand{\ty}{\mathbf{\rm \tilde y}}
\newcommand{\bu}{\mathbf{\rm u}}

\newcommand{\bw}{\mathbf{\rm w}}

\newcommand{\tLambda}{\widetilde \Lambda}

\title{Microscopic derivation of the one-dimensional\\ constrained Euler equations}
\author{Charlotte Perrin\footnote{Aix Marseille Univ, CNRS, I2M, Marseille, France; charlotte.perrin@cnrs.fr}}

\begin{document}

\maketitle

\begin{footnotesize}
\centerline{\bf Abstract}
We provide a new existence result for weak solutions to the one-dimensional Euler equations with a maximal density constraint, corresponding to a unilateral constraint on the density. 
Such models arise in the description of congestion phenomena in compressible flows. 
Our approach is based on a microscopic approximation by a system of $N$ solid particles of identical radius $r$, with $2r = 1/N$. 
The particles move freely until collision, after which perfectly inelastic interactions are imposed, so that colliding particles stick together. 
At this level, the non-overlapping condition is encoded through Signorini-type constraints from contact mechanics. 
Passing to the limit as $N \to +\infty$, we rigorously establish the connection between these microscopic Signorini conditions and the macroscopic unilateral constraint on the density, together with the associated sign condition on the congestion pressure. 
The analysis is carried out in a Lagrangian framework, which is natural at the microscopic level and relies at the macroscopic level on the monotone rearrangement associated with the density. 
A key ingredient of our result is a monotonicity property of the congested region, which allows us to reduce the dynamics to a first-order evolution in time.

\bigskip
\noindent{\bf Keywords:} maximal packing, hydrodynamic limit, compressible Euler equations, Lagrangian formulation, monotone rearrangement.

\medskip
\noindent{\bf MSC:} 35Q35, 76N10, 35L65, 76T25.
\end{footnotesize}

\bigskip

\section{Introduction}

In this paper, we investigate the one-dimensional pressureless Euler system under a maximal density constraint:
\begin{subnumcases}{\label{eq:macro-euler}}
\partial_t \rho + \partial_x(\rho u) = 0,\\
\partial_t(\rho u) + \partial_x(\rho u^2) + \partial_x p = 0,\\
0 \leq \rho \leq 1, \quad (1-\rho)\,p = 0, \quad p \geq 0.
\end{subnumcases}
Here, $\rho$ denotes the density of the fluid and $u$ its velocity. The pressure $p$ is associated with the constraint $\rho \leq 1$ and can be interpreted as a Lagrange multiplier enforcing incompressibility of the velocity field inside congested regions where $\rho = 1$.

The system~\eqref{eq:macro-euler} was first introduced by Bouchut \emph{et al.}~\cite{bouchut2000} as an asymptotic model for liquid--gas mixtures, in the regime where the gas density is negligible compared to that of the liquid. Since then, it has been widely studied as a simplified model for congestion phenomena (see~\cite{perrin2026} and references therein). Such effects arise in a variety of applications, ranging from collective motion (crowd dynamics~\cite{maury2011}, traffic flow) to geophysical flows (granular media and suspensions~\cite{lefebvre2011}), as well as free-surface and partially free-surface flows~\cite{lannes2017,godlewski2018}.

The present work is mainly motivated by applications to granular media and collective dynamics. In this context, the macroscopic density constraint $\rho \leq 1$ can be understood as the continuum counterpart of a microscopic packing constraint: at the particle level, solid grains (or agents) are subject to a non-overlapping condition, which prevents excessive compression. Combined with contact dynamics, this leads to the activation of a unilateral constraint, often referred to as a Signorini condition.

The main goal of this paper is to rigorously derive the macroscopic model~\eqref{eq:macro-euler} from a microscopic particle system, and to establish a precise link between discrete dynamics with non-overlapping constraints and the corresponding Eulerian description.

\bigskip
More precisely, the natural microscopic model to consider consists of $N > 1$ solid particles interacting only through inelastic collisions (once particles collides, they stick). The dynamics can be written in the following form:
\begin{subnumcases}{\label{eq:micro-1st}}
u_i + N\big(\lambda_i - \lambda_{i-1}\big) = u^0_i,  \quad \text{for } i= 1,\dots, N,\label{eq:micro-1st-ID}\\
x_{i+1} - x_i \geq 2r, \quad \big(x_{i+1} - x_i - 2r\big)\,\lambda_i = 0, \quad \lambda_i \geq 0,\label{eq:micro-1st-compl}
\end{subnumcases}
where $u^0_i$ denotes the initial velocity of particle $i$, and $\lambda_i$ represents the (rescaled) contact force between particles $i$ and $i+1$. The conditions~\eqref{eq:micro-1st-compl}, known as complementarity conditions, correspond to the so-called Signorini conditions. At the boundaries, one naturally imposes
\[
\lambda_0 = \lambda_N = 0.
\]
At the microscopic level, the multipliers $\lambda_i$ encode the reaction forces associated with the non-overlapping constraint, and the dynamics is of first order in time.
In order to relate this system to the macroscopic Euler model~\eqref{eq:macro-euler}, which is of second order in time, one is led to formally differentiate the system and obtain
\begin{subnumcases}{\label{eq:micro-2nd}}
\dot{u}_i + N\big(p_i - p_{i-1}\big) = 0,  \quad \text{for } i= 1,\dots, N,\label{eq:micro-2nd-ID}\\
x_{i+1} - x_i \geq 2r, \quad \big(x_{i+1} - x_i - 2r\big)\,p_i = 0, \quad p_i \geq 0,\label{eq:micro-2nd-compl}
\end{subnumcases}
where $p_i = \dot{\lambda}_i$.
However, it is not immediate to ensure that the complementarity condition~\eqref{eq:micro-2nd-compl} is preserved at the level of the time derivatives $p_i$. 

The first objective of this paper is therefore to establish a rigorous equivalence between the first-order~\eqref{eq:micro-1st} and second-order~\eqref{eq:micro-2nd} microscopic formulations. 
The second and main objective is to justify the mean-field limit $N \to +\infty$ and to derive a solution to the constrained Euler system~\eqref{eq:macro-euler}.
This provides a rigorous bridge between particle dynamics with unilateral constraints and the macroscopic Euler system with congestion.

\bigskip

\noindent{\bf A brief state-of-the-art on Equations~\eqref{eq:macro-euler}.}
In Berthelin's work~\cite{berthelin2002}, the existence of weak solutions to the constrained Euler equations~\eqref{eq:macro-euler} is obtained by passing to the limit in a class of explicit solutions known as sticky blocks. This construction allows one to capture the formation of congested regions and leads to solutions satisfying several important properties, including a maximum principle on the velocity, an Oleinik-type one-sided Lipschitz estimate.
A major difficulty in this framework is that the pressure $p$ is only a measure, while the density $\rho$ belongs to $L^\infty$, so that the product $\rho p$ is not well defined in general. Formally, the constraint $(1-\rho)p=0$ amounts to $\rho p = p$, but this relation cannot be interpreted directly at the weak level.
To overcome this issue, Berthelin introduces an alternative formulation of the constraint based on nonlinear quantities. More precisely, he considers two auxiliary potentials $Q$ and $M$ defined through
\[
\partial_x Q = \rho u, \qquad \partial_t Q = -(\rho u^2 + p),
\]
\[
\partial_x M = \rho, \qquad \partial_t M = -\rho u,
\]
and defines two quantities
\[
R_1 = -\partial_t(\rho Q) - \partial_x(\rho u Q), \qquad
R_2 = \partial_t(\rho u M) + \partial_x\big((\rho u^2 + p)M\big).
\]
For smooth solutions, a direct computation shows that
\[
R_1 = R_2 = \rho p,
\]
so that imposing the condition
\[
R_1 = R_2 = p
\]
in the sense of distributions provides a weak formulation of the constraint $\rho p = p$.
This reformulation makes it possible to pass to the limit in the approximate solutions despite the low regularity of $p$, at the cost of introducing nonlocal quantities and a more involved structure of the equations.

From a modeling and numerical viewpoint, the pressureless Euler system with congestion also raises fundamental issues related to non-uniqueness and the lack of a canonical collision law. This aspect is discussed in detail by Maury and Preux in~\cite{maury2017}, where the authors emphasize that the constrained Euler admits infinitely many weak solutions, including non-physical ones. 
In particular, even imposing natural conditions such as energy dissipation is not sufficient to restore uniqueness. 
As illustrated in~\cite{maury2017}, one can construct multiple energy-decreasing solutions corresponding to different collision behaviors (e.g. sticking versus partial rebound with a restitution coefficient). 
This highlights the fact that the macroscopic system lacks an intrinsic selection principle, which should encode the underlying microscopic interaction mechanism.
This observation is closely related to the absence of a well-defined collision law at the macroscopic level. 
In the microscopic granular model, interactions between particles are governed by unilateral constraints and impact laws (possibly involving restitution coefficients), which determine uniquely the post-collisional velocities. 
In contrast, the macroscopic system~\eqref{eq:macro-euler} only enforces the congestion constraint through the pressure, without prescribing how momentum is redistributed during collisions.\\
To address this issue, Maury and Preux propose a time-splitting scheme combining a transport step with a projection onto the admissible set of densities, inspired by optimal transport methods and crowd motion models. 
While this approach provides a consistent numerical framework and captures relevant qualitative features of congested flows, the convergence towards a well-posed macroscopic dynamics remains largely open~\cite{preux2016}.\\
These considerations further motivate the present work, where the macroscopic dynamics is derived from a microscopic model with inelastic collisions. In this setting, the collision law is encoded at the particle level and is preserved in the limit, leading to a more intrinsic selection of admissible solutions.

\medskip
\noindent{\bf Lagrangian standpoint.}
An alternative and fruitful point of view to study fluid dynamics in one space dimension is provided by Lagrangian mass coordinates.
In this framework, the density $\rho(t,x)$ is represented through a monotone map $X(t,\cdot)$, defined as the pseudo-inverse of the cumulative distribution function:
\[
X(t,w) := \inf \left\{ x \in \R \;:\; \int_{-\infty}^x \rho(t,y)\,dy \geq w \right\}, \quad w \in (0,1),
\]
so that
\[
\rho(t,\cdot) = X(t,\cdot)_{\#} \, \mathcal{L}^1_{(0,1)}.
\]
This Lagrangian formulation has been extensively developed in the analysis of pressureless-type systems, where it provides a powerful framework to construct and characterize weak solutions. 
In particular, it plays a central role in the study of pressureless gas dynamics, allowing one to encode the dynamics through monotone transport maps and to exploit the underlying convex structure of the problem.
Foundational contributions in this direction include the work of Natile and Savaré~\cite{natile2009}, where the Lagrangian formulation is used to describe the evolution of sticky particle systems and to establish the convergence towards macroscopic solutions. 
This approach was further developed by Cavalletti et al~\cite{cavalletti2015}, who construct global weak solutions directly in Lagrangian coordinates by means of projection techniques onto convex sets of monotone maps.
The Lagrangian point of view has also been extended to more complex systems, such as Euler--Poisson equations, in the work of Brenier et al.~\cite{brenier2013}, where similar ideas are used to capture the interaction between transport and self-consistent forces.
A key feature of this framework is that it establishes a direct link between macroscopic weak solutions and the underlying microscopic dynamics of sticky particles, providing both a constructive approach and a natural selection principle for admissible solutions.

The Lagrangian framework can also be adapted to incorporate a congestion constraint. 
Indeed, in these variables, the congestion constraint $\rho \leq 1$ translates into the simple monotonicity condition
\[
\partial_w X(t,w) \geq 1.
\]
In this direction, it has been successfully applied in~\cite{perrin2018} to macroscopic models of granular flows with memory effects, where the congestion constraint is coupled with an additional adhesion variable. 
In~\cite{perrin2018}, the authors consider a one-dimensional granular flow model involving an adhesion potential $\gamma$, which encodes the history of congestion and satisfies a transport-reaction equation coupled with the momentum balance.
In this context, the Lagrangian description allows one to construct global weak solutions by exploiting the monotonicity of the transport map and interpreting the dynamics as a projection onto a set of admissible configurations, in the spirit of pressureless gas dynamics.\\
In~\cite{kim2024}, Kim et al. consider a first-order in time microscopic dynamics of particles subject to a non-overlapping constraint, where the evolution of the associated Lagrangian map is governed by
\[
\partial_t X = -\phi'(X) - \partial_w \Lambda,
\]
together with the complementary conditions
\[
\Lambda \geq 0, \quad \partial_w X \geq 1, \quad \Lambda(1-\partial_w X)=0.
\]
This equation can be interpreted as the projection of the spontaneous velocity field $-\phi'(X)$ onto the set of admissible velocities preserving the congestion constraint. 
The above system is equivalent, in Eulerian variables, to a continuity equation for the density $\rho$ coupled with a pressure $p$ enforcing the constraint $\rho \leq 1$, namely
\[
\partial_t \rho - \partial_x\big(\rho(\phi' + \partial_x p)\big) = 0,
\qquad
0 \leq \rho \leq 1,  \quad p(1-\rho)=0, \quad p \geq 0.
\]

In contrast, the present work addresses a second-order in time dynamics, consistent with the Euler equations~\eqref{eq:macro-euler}.
While the Lagrangian framework remains central, this setting raises additional difficulties at the level of time regularity. 
Indeed, the microscopic model is naturally formulated as a first-order differential inclusion in time, and passing to a second-order formulation formally requires differentiating the Lagrange multipliers associated with the constraint. 
However, these multipliers typically have low regularity in time, making this step non-trivial and requiring a careful analysis.

\bigskip
\noindent{\bf Main result.} 
We can now state the main result of the present paper.

\begin{thm}[Main result]\label{thm:main}
	Let $(\rho^0,u^0)$ be an initial Eulerian datum such that
	\begin{align}
	\rho^0\in \mathcal P_2(\mathbb R), \quad \rho^0\ll \mathcal L^1, \quad \rho^0 \leq 1,
	\quad \operatorname{supp}\rho^0 \Subset \R,\\
	u^0 \in L^2(\R,\rho^0) \cap L^\infty(\R,\rho^0), \quad \partial_x u^0 = 0 \quad \text{in} ~\{\rho^0 = 1\}. \label{hyp:u0}
	\end{align}
	Let $(X^0,U^0)$ be such that
	\begin{equation}
	\rho^0 = X^0_{\#}\mathcal L^1_{(0,1)},
	\quad 
	u^0 = U^0 \circ X^0.
	\end{equation}
	
	\noindent\textbf{(i) Discretization of the initial datum.}
	Let $N> 1$, and consider $N$ particles of identical radius $r>0$ such that
	\[
	2r = \frac{1}{N}.
	\]
	There exists a sequence of positions and velocities $(\bx^0_N,\bu^0_N) = \big((x_i^0)_{i=1,\dots,N}, (u_i^0)_{i=1,\dots,N}\big)$ such that
	\begin{equation}
	x_{i+1}^0 - x_i^0 \geq 2r,
	\qquad
	u_{i+1}^0 = u_i^0 \ ~\text{whenever}\ x_{i+1}^0 - x_i^0 = 2r,
	\end{equation}
	and if $(X_0^N,U_0^N)$ denote the associated piecewise constant interpolations, then
	\begin{equation}
	X^0_N \to X^0,
	\qquad
	U^0_N \to U^0
	\quad \text{strongly in } L^2(0,1).
	\end{equation}
	
	\medskip
	
	\noindent\textbf{(ii) Microscopic dynamics.}
	There exists a unique trajectory $\bx^N \in \mathrm{Lip}([0,T];\mathbb R^N)$ solving~\eqref{eq:micro-1st}.
	Let $X_N$ be a suitable approximation of $\bx^N$. One can associate a velocity field $U_N$ and a pressure field $P_N$ such that $(X_N,U_N,P_N)$ is solution to the PDE system
	\begin{equation}{\label{eq:PDE-Lagr-N}}
	\begin{cases}
	\partial_t X_N = U_N,\\
	\partial_t U_N + \partial_w P_N = 0,
	\end{cases}
	\end{equation}
	in the sense of distributions, together with the complementary relation
	\begin{equation}\label{eq:PDE-Lagr-N-compl}
	\partial_w X_N \geq 1,
	\qquad	(\partial_w X_N - 1)\,P_N = 0,
	\qquad	P_N \geq 0,
	\end{equation}
	and the Oleinik-type inequality
	\begin{equation}\label{eq:Oleinik-tUN-intro}
	\partial_w U_N < \dfrac{\partial_w X_N}{t} \cdot
	\end{equation}

	\medskip
	
	\noindent\textbf{(iii) Convergence towards a macroscopic solution.}
	Up to extraction of a subsequence, $(X_N,U_N,P_N)$ converges in a weak sense towards $(X,U,P)$ which defines a weak (distributional) Eulerian solution $(\rho, \rho u, p)$ of the constrained Euler equation~\eqref{eq:macro-euler} satisfying the Oleinik inequality $ \partial_x u < 1/t$ in $\mathcal{D}'$.
\end{thm}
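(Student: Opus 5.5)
The argument follows the three parts of the statement, working in Lagrangian mass coordinates throughout.

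\textbf{(i) Discretization.} I set $w_i = (i-1/2)/N$. I define $x_i^0$ as an average of $X^0$ over $((i-1)/N, i/N)$, and $u_i^0$ as the corresponding average of $U^0$. Since $\partial_w X^0 \ge 1$, the gaps satisfy $x_{i+1}^0 - x_i^0 \ge 1/N = 2r$. If $x_{i+1}^0-x_i^0=2r$, then $\partial_w X^0 = 1$ on the union of the two cells, so this union lies in $\{\rho^0=1\}$. By \eqref{hyp:u0}, $U^0$ is constant there, hence $u_{i+1}^0=u_i^0$. One subtlety is that an averaged gap might equal $2r$ without $X^0$ being exactly affine on the union. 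To avoid this, I would instead average over a neighbourhood, or define the cells through $X^0$ evaluated at the cell boundaries. Strong $L^2$ convergence of the piecewise constant interpolations then follows from the Lebesgue differentiation theorem, i.e. the standard convergence of $L^2$ projections onto piecewise constants.

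\textbf{(ii) Microscopic dynamics.} I read \eqref{eq:micro-1st} in the integrated sense $\bx(t) = \bx^0 + t\bu^0 - tN D^*\lambda$. Equivalently, $\bx(t)$ is the Euclidean projection of the free motion $\bx^0+t\bu^0$ onto the closed convex cone $K=\{x_{i+1}-x_i\ge 2r\}$; this is the sticky-particle projection formula of Natile--Savaré and Cavalletti et al. Existence, uniqueness and Lipschitz continuity follow from the fact that projection onto a convex set is $1$-Lipschitz. The key structural fact is monotonicity of the congested set: once $x_{i+1}-x_i=2r$ at time $t_0$, it stays so for $t\ge t_0$. I would prove this through the characterization of the projection by blocks. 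The free velocities inside a block are replaced by their average, and the blocks at time $t$ are determined by the convex minorant of the cumulative sums of $\bx^0+t\bu^0$. As $t$ grows, the linear-in-$t$ structure only allows contact sets to merge.

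With this monotonicity, I set $u_i=\dot x_i$ on each interval between the finitely many collision times. Blocks move rigidly with the averaged momentum, so $u_i$ is piecewise constant in time. The multipliers $\lambda_i(t)$ are then explicit: they are piecewise affine and nondecreasing, and they vanish before contact. Defining $p_i=\dot\lambda_i\ge0$ gives \eqref{eq:micro-2nd} almost everywhere. At collision times, $\dot u$ carries Dirac masses that are absorbed as time-jumps of $\lambda$, and these appear as measures in $P_N$. The complementarity of $p_i$ holds because $p_i$ is supported where the contact is active, and monotonicity guarantees that the contact stays active afterwards.

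To obtain \eqref{eq:PDE-Lagr-N}, I take $X_N$ piecewise affine in $w$ with slopes $N(x_{i+1}-x_i)\ge1$ between the nodes. I take $U_N$ as the corresponding interpolation of the $u_i$, and $P_N$ as the piecewise constant interpolation of the $p_i$. The PDE then holds in $\mathcal D'$ by discrete summation by parts, and the complementarity relation \eqref{eq:PDE-Lagr-N-compl} holds cellwise. For the Oleinik inequality I argue block by block. Two adjacent blocks that are separated at time $t$ never collided, so $x_{i+1}(t)-x_i(t) = (x_{i+1}^0-x_i^0) + t(u_{i+1}-u_i)$, with the relevant block velocities. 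This gives $u_{i+1}-u_i < (x_{i+1}-x_i)/t$, since $x^0_{i+1}-x^0_i>0$. Inside a block the velocity difference is $0 < \partial_wX_N/t$.

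\textbf{(iii) Limit.} The uniform bounds are $\|U_N\|_\infty\le\|U^0\|_\infty$, by the maximum principle for averages, and an energy bound in $L^2$. The map $X_N$ is equi-Lipschitz in time with values in $L^2$. Aubin--Lions-type compactness then gives $X_N\to X$ strongly. The pressure is controlled through $\int P_N = $ the momentum flux, so $P_N$ is bounded in measures, and $U_N \rightharpoonup U$. The Oleinik inequality gives a BV bound in $w$ on $U_N$ for $t>0$. Together with the time equation, this yields strong convergence of $U_N$ by Helly/Aubin--Lions. One can then pass to the limit in the linear system and in $\partial_wX\ge1$, $P\ge0$.

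\textbf{Main obstacle.} The hardest step is passing to the limit in the product $(\partial_w X_N -1)P_N=0$, which pairs a weakly converging measure with a weakly converging derivative. I would first try the first-order reformulation. Setting $\Lambda_N=\int_0^t P_N$, the relation $(\partial_wX_N-1)\Lambda_N=0$ holds by monotonicity of the congested set. Moreover, $\Lambda_N$ is bounded in BV in $w$ and monotone in $t$, so it converges strongly, and $\partial_w X_N-1 \rightharpoonup \partial_wX-1$ weakly. The product therefore passes to the limit, giving $(\partial_wX-1)\Lambda=0$. Because the congested set is also monotone in the limit, this can be differentiated in time to give $(\partial_wX-1)P=0$. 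Finally, push forward by $X$ to obtain the Eulerian solution $(\rho,\rho u,p)$ of \eqref{eq:macro-euler}, using $p\circ X = P$ on congested zones and $\partial_x u = \partial_wU/\partial_wX$. The Oleinik inequality then transfers to $\partial_x u<1/t$.
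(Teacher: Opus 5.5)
Your architecture matches the paper's: projection formula, persistence of contacts, Oleinik estimate from stickiness, BV compactness in $w$, and push-forward by $X$. Your pressure bound is actually simpler than the paper's. Since $P_N\ge0$ and the boundary pressures vanish, testing the discrete momentum equation against $w$ controls $\int_0^T\!\int_0^1P_N$ uniformly, with no fictitious particle or $\delta$. Equivalently, $\Lambda_N$ is nondecreasing in $t$, bounded, and $\Lambda_N(0)=0$.

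\textbf{Main gap.} It is the last step of your main obstacle, where you obtain $(\partial_wX-1)P=0$ from $(\partial_wX-1)\Lambda=0$ ``by differentiating in time because the congested set is also monotone in the limit''.

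First, that monotonicity is not inherited. $\partial_wX_N$ converges only weakly-$*$ as measures, and congested sets do not pass to such limits: slopes alternating between $1$ and $3$ on consecutive cells have limit slope $2$. So $\{\partial_wX(t)=1\}$ is unrelated to the discrete congested sets, and its monotonicity would need its own proof.

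Second, and more to the point, formally
\[
\partial_t\big[(\partial_wX-1)\Lambda\big]=(\partial_wX-1)P+\Lambda\,\partial_wU .
\]
So what you really need is $\Lambda\,\partial_wU=0$, together with a meaning for these products ($P$ is a measure in $t$, $\partial_wX$ a measure in $w$, and $\Lambda$ jumps in $t$). This is the whole difficulty, not a formality.

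Your route can be completed as follows.
\begin{itemize}
\item Test against $\Psi$ and use $U=U^0-\partial_w\Lambda$, $\partial_tX=U$ and the first-order complementarity. The weak form of $(\partial_wX-1)P=0$ then reduces exactly to $\langle\partial_wU,\Lambda\Psi\rangle=0$, where $\partial_wU(t)$ is a finite measure for $t>0$ by the Oleinik bound.
\item This follows from monotonicity of $\{\Lambda(t)>0\}$, which is immediate from $P\ge0$, not from monotonicity of the congested set. Where $\Lambda(t_0)>0$ on an interval, $\partial_wX=1$ there for all later times, hence $\partial_wU=\partial_t\partial_wX=0$.
\end{itemize}

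The paper instead differentiates at the discrete level, where persistence of contacts makes it rigorous. It then passes to the limit in $\langle U_N,\partial_t[(\tX_N-w)\Psi]\rangle+\langle P_N,(\tX_N-w)\partial_w\Psi\rangle=0$. This needs uniform convergence of $\tX_N$ plus compactness of $U_N$ (for the product $U_N\tU_N$), which your Oleinik/Aubin--Lions step supplies.

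Your first-order limit should use the same integration by parts in $w$: $X_N$ converging strongly, paired with $\partial_w\tLambda_N$ converging weakly-$*$ in $L^\infty$. ``$\Lambda_N$ strongly times $\partial_wX_N$ weakly'' is not by itself valid when $\partial_wX_N$ converges only as measures.

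\textbf{Further corrections.}
\begin{itemize}
\item Your Oleinik identity $x_{i+1}(t)-x_i(t)=(x^0_{i+1}-x^0_i)+t(u_{i+1}-u_i)$ is false. Particles $i$ and $i+1$ never touched, but their blocks absorb other blocks on $(0,t)$, so their velocities change. What holds is that $u_i$ is nondecreasing and $u_{i+1}$ nonincreasing on $(0,t)$: the block ending at $i$ only absorbs faster blocks from the left, and symmetrically. Hence $\int_0^t(u_{i+1}-u_i)\,ds\ge t\,(u_{i+1}(t)-u_i(t))$, which is the paper's argument.
\item With $U_N$ piecewise affine and $P_N$ piecewise constant, $\partial_tU_N+\partial_wP_N=0$ cannot hold in $\mathcal D'$, because $\partial_wP_N$ is a sum of Dirac masses at the nodes. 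You need staggered interpolations, with velocities on one grid and multipliers and pressures on the dual grid.
\item The multipliers in \eqref{eq:micro-1st} are piecewise constant in time, with nonnegative jumps at collisions, so each $p_i$ is a sum of Dirac masses. The piecewise affine object is $\int_0^t\lambda_i\,ds$, and this is what should replace $t\lambda$ in your integrated form.
\item Your concern in (i) is unfounded. Since $X^0(w+1/N)-X^0(w)\ge 1/N$ pointwise, an averaged gap equal to $2r$ forces $\partial_wX^0=1$ on both cells, so plain cell averages work.
\end{itemize}
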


\bigskip
\begin{rmk}
For the sake of clarity and concision, the statement of Theorem~\ref{thm:main} involves several simplifications and mild abuses of notation.\\
First, the Lagrangian system~\eqref{eq:PDE-Lagr-N}-\eqref{eq:PDE-Lagr-N-compl} is written in a compact form, although it actually involves different interpolations of the discrete variables $\bx^N$ (see Lemma~\ref{lem:discPDE2} for a precise construction).\\
Second, the convergence of $(X_N,U_N,P_N)$ is only stated in a weak sense. 
In practice, the passage to the limit in the nonlinear constraint~\eqref{eq:PDE-Lagr-N-compl} requires strong convergence of at least one component, which is obtained through suitable compactness estimates.	
We refer to Proposition~\ref{prop:cvg1} and Proposition~\ref{prop:2ndorder-LagrPDE} for the precise functional framework and convergence results.
\end{rmk}

\bigskip
\begin{rmk}
\begin{itemize} 
	\item A first difficulty lies in obtaining suitable estimates on the multipliers. 
	The standard Karush-Kuhn-Tucker (KKT) theory does not directly provide sign or time-regularity information on the derivatives of the multipliers $\lambda_i$ arising in the Signorini conditions. 
	In particular, to justify the second-order complementarity relation (positivity and bounds on the pressure), the projection formula and its semigroup property play a crucial role. 
	To pass to the limit $N \to +\infty$ we derive an important bound on the pressure that is uniform with respect to $N$ (see Proposition~\ref{prop:pressure-bound}). 
	\item The Oleinik condition naturally appears at the microscopic level as a consequence of the sticky particle dynamics. 
	It provides a control on the velocity gradient, see~\eqref{pwUN-L1}, which is essential to pass to the limit in the nonlinear complementarity relation (see the proof of Proposition~\ref{prop:2ndorder-LagrPDE}). 
\end{itemize}
	
\end{rmk}

\bigskip
\begin{rmk}
A major difficulty in Berthelin's approach is that the pressure $p$ is only a measure, while the density $\rho$ belongs to $L^\infty$, so that the product $\rho p$ is not well defined in general. 
This reflects the lack of a canonical representative of $\rho$ on the support of $p$.
In contrast, the use of Lagrangian coordinates provides additional structure.
The density is obtained as the push-forward of the Lebesgue measure through a monotone map, while the pressure is supported on the congested region where $\rho=1$. 
This allows one to give a direct meaning to the relation $\rho p = p$ without resorting to an alternative formulation.\\
Another important difference concerns the absence of a macroscopic shock selection principle in Eulerian formulations (see, for instance Maury and Preux \cite{maury2017} previously discussed).
Here, the solution is constructed from a Lagrangian dynamics satisfying a projection principle on the velocity, which encodes the inelastic collision mechanism. 
This provides a natural selection criterion for admissible solutions. 
Actually, when our microscopic dynamics is reformulated as a first-order differential inclusion:
\[
\dot \bx(t) + \N_{\bx(t)} K^N \ni \bu^0,
\]
we fit into the framework of sweeping processes in the sense of Moreau~\cite{moreau1977}, $\N_{\bx(t)} K^N$ being the normal cone to the set of admissible configurations $K^N$ (i.e. satisfying the non-overlapping constraint) at $\bx(t)$, see~\eqref{df:normal-cone}.
This viewpoint provides a natural explanation for the projection structure and its role as a selection principle.\\
Issues related to uniqueness are discussed separately in Appendix~\ref{app:uniqueness}.
\end{rmk}

\bigskip
\noindent{\bf Outline.}
The paper is organized as follows.
In Section~\ref{sec:micro}, we introduce the microscopic dynamics and the associated discrete functional framework. We revisit the Karush--Kuhn--Tucker theory in this setting in order to characterize the first-order dynamics, and derive suitable estimates on the multipliers. We then extend the analysis to obtain a second-order formulation in time.
In Section~\ref{sec:limitN}, we perform the passage to the limit as $N \to +\infty$ at both the first- and second-order levels. This allows us to construct a macroscopic Lagrangian solution and to recover the Eulerian system~\eqref{eq:macro-euler}.
Finally, the appendix gathers several complementary results. 
It includes a detailed presentation of the Karush--Kuhn--Tucker framework classically used in the discrete setting, technical lemmas required in the convergence analysis, and a discussion on the issue of uniqueness for the macroscopic systems.


\section{Microscopic dynamics}\label{sec:micro}

\subsection{Discretisation of the macroscopic initial datum}

Let $(\rho^0,u^0)$ be an initial datum satisfying the assumptions of Theorem~\ref{thm:main}. 
In particular, $\rho^0$ is a probability measure with compact support, absolutely continuous with respect to the Lebesgue measure, and such that $\rho^0 \leq 1$.
We first construct a discrete approximation of $\rho^0$ by means of a quantile sampling. 
Let $X^0 : (0,1) \to \mathbb R$ be the monotone transport map such that 
\[ 
\rho^0 = X^0_{\#}\mathcal L^1_{(0,1)}. 
\] 
For a given integer $N \geq 2$, we define the discrete positions $\bx^0 = \bx^0_N = (x_i^0)_{i=1,\dots,N}$ by 
\[ 
x_i^0 := X^0\!\left(\frac{i}{N}\right), \qquad i=1,\dots,N. 
\]
By monotonicity of $X^0$, the sequence $(x_i^0)$ is nondecreasing. 
By construction, one has
\[
\int_{x_i^0}^{x_{i+1}^0} \rho^0(x)\,dx = \frac{1}{N}.
\]
Since $\rho^0 \leq 1$, it follows that
\[
x_{i+1}^0 - x_i^0 \geq \dfrac{1}{N},
\]
which shows that the particles of radius $2r = 1/N$ are initially non-overlapping.

We then define the Lagrangian velocity $U^0$ such that
\[
u^0 = U^0 \circ X^0. 
\]
and the discrete velocities $\bu_N^0 = (u_i^0)_{i=1,\dots,N}$ by sampling the Lagrangian velocity $U^0$: 
\[ 
u_i^0 := U^0\!\left(\frac{i}{N}\right), \qquad i=1,\dots,N.
\]
Let us now check that if two particles are in contact then they have the same velocity.
From the Eulerian incompressibility condition~\eqref{hyp:u0} satisfied by the initial velocity $u^0$, we derive
\[
\partial_w U^0(w) = \partial_x u^0(X^0(w)) \partial_w X^0(w) = 0 \quad \text{on} \quad \{\partial_w X^0 = 1\}.
\]
Since $\partial_w X^0 \geq 1$, we always have $X^0\left(\dfrac{i+1}{N}\right) - X^0\left(\dfrac{i}{N}\right) \geq \dfrac{1}{N}$.
So, if two particles are in contact, the equality imposes
\[
\partial_w X^0 = 1 \quad \text{a.e. in} ~\left(\dfrac{i}{N},\dfrac{i+1}{N}\right).
\] 
Therefore, by our incompressibility assumption, $\partial_w U^0 = 0 \quad \text{a.e. in} ~\left(\dfrac{i}{N},\dfrac{i+1}{N}\right)$, and finally deduce
\[
u^0_{i+1} = U^0\left(\dfrac{i+1}{N}\right) = U^0\left(\dfrac{i}{N}\right) = u^0_i.
\]

\subsection{Discrete configuration space}

The microscopic system~\eqref{eq:micro-2nd} can be interpreted as a second-order differential inclusion in the configuration space of particle positions.
Let
\[
\bx=(x_1,\dots,x_N)\in\R^N,
\qquad
\bu=(u_1,\dots,u_N)\in\R^N
\]
denote respectively the positions and velocities of the particles.
Throughout the paper we equip $\R^N$ with the rescaled Euclidian norm $\|\cdot\|$: 
\[
\|\bx\|^2 := \frac{1}{N} \sum_{i=1}^N |x_i|^2,
\] 
which will be consistent with the $L^2(0,1)$ norm of the associated interpolation functions defined in the next section.
Nevertheless, we keep the notation $\langle \cdot,\cdot \rangle$ for the standard (non-rescaled) Euclidean inner product (which is used only in this section, where $N$ is fixed)
\[
\langle \bx,\by\rangle := \sum_{i=1}^N x_i y_i,
\]
so that
\[
\|\bx\|^2 = \frac{1}{N}\langle \bx,\bx \rangle.
\]
After introducing the configuration space, we now describe the geometric constraints induced by the non-overlapping condition between particles.
The admissible configurations are given by the convex set
\begin{equation}\label{df:KN}
K^N
=
\{\bx\in\R^N;\; x_{i+1}-x_i\geq 2r,\quad i=1,\dots,N-1\},
\end{equation}
This constraint expresses the fact that particles of radius  $r > 0$ cannot overlap.
This set is closed and convex.

\medskip

\begin{rmk}[Translation of the configuration space]
	In the following, it will be sometimes convenient to remove the minimal spacing between particles by introducing the translated configuration
	\begin{equation}\label{df:translat-x}
	\tx = (x_1, x_2-2r, \dots, x_N -2r(N-1)) .
	\end{equation}
	In these new coordinates the admissible configurations become
	\begin{equation}\label{df:tKN}
	\tK^N
	=
	\{\tx\in\R^N;\; \tilde x_{i+1}- \tilde x_i \geq 0,\quad i=1,\dots,N-1\},
	\end{equation}
	i.e. $\tK^N$ is the cone of monotone (non-decreasing) vectors.
\end{rmk}

\medskip

We denote by $I_{K^N}$ denotes the indicator function of $K^N$, defined by
\begin{equation*}
I_{K^N}(\bx) = \begin{cases}
0 \quad &\text{if}~ \bx \in K^N,\\
+ \infty \quad &\text{otherwise}.
\end{cases}
\end{equation*}
Its subdifferential coincides with the normal cone
\begin{equation}\label{df:normal-cone}
\partial I_{K^N} = \N_{\bx} K^N, \quad \text{where} \quad \N_{\bx} K^N = \{ \lambda \in \R^N; \langle \lambda, \by - \bx \rangle \leq 0, ~ \forall \ \by \in K^N\}.
\end{equation}
This cone represents the reaction forces enforcing the non-overlapping constraint.\\
The non-overlapping constraint also restrict admissible velocities. 
Whenever two particles are in contact, their relative velocity cannot be negative, which leads to the cone
\begin{equation}\label{def:velcone}
\mathcal C_\bx^N
=
\{\bu\in\R^N;\;
u_{j+1}\ge u_j
\ \text{whenever } x_{j+1}=x_j+2r\}.
\end{equation}
These constraints naturally lead to a formulation of the microscopic dynamics as a differential inclusion in the configuration space.

The admissible velocities can be interpreted geometrically through the tangent cone. 
For any configuration $\bx \in K^N$, it is not difficult to check that the admissible velocities coincide with the tangent cone to $K_N$ at $\bx$
\[
\mathcal C_\bx^N
= \mathcal{T}_\bx K^N := \overline{\{\theta(\by - \bx); \ \by \in K^N, \ \theta \geq 0\}},
\]
and the normal cone is the polar cone of the tangent cone:
\[
\N_{\bx} K^N = (\mathcal{T}_\bx K^N)^\circ.
\]

\medskip
With these notations, the microscopic dynamics of the particles can be formulated as a differential inclusion:
\begin{equation}\label{eq:diff-order2}
\begin{cases}
\dot\bx(t)=\bu(t),\\
\dot\bu(t)\in -\partial I_{K^N}(\bx(t)),
\end{cases}
\end{equation}
supplemented with initial data
\[
\bx(0)=\bx^0\in K^N,
\qquad
\bu(0)=\bu^0\in\mathcal C_{\bx^0}^N .
\]
The second relation of~\eqref{eq:diff-order2} expresses that the acceleration is given by the reaction forces associated with the constraint $\tx \in K^N$.

\subsection{Sticky dynamics and reduction to first order}

Collisions in the system are perfectly inelastic: whenever two particles collide they stick together and subsequently move with the same velocity. As a consequence, clusters of particles can only grow in time and never break apart.
This stickiness property allows us to reformulate the microscopic dynamics as a first-order differential inclusion.

\medskip
For a given configuration $\bx \in K^N$, we denote by
\begin{equation}\label{df:Omega-micro}
\Omega_{\bx}^N
=
\{j\in\llbracket1,N-1\rrbracket;\; x_{j+1} = x_j + 2r\},
\end{equation}
the set of active constraints, that is the set of particle pairs that are in contact.
Particles belonging to the same contact cluster must share the same velocity.
We therefore introduce the linear subspace
\begin{equation}\label{df:H-micro}
\H_{\bx}^N
= \{\bu\in\R^N;\; u_{j+1}=u_j \ \text{for } j\in\Omega_{\bx}^N\}.
\end{equation}
The stickiness of the dynamics implies that once two particles collide they remain in contact afterwards. 
In particular the set of active constraints is non-decreasing in time.
This property translates into a monotonicity property of the subdifferentials of the indicator function of $K^N$. 
More precisely, if $t \mapsto x(t)$ is a solution of the microscopic dynamics, then for all $s \leq t$,
\begin{equation}
\partial I_{K_N}(\bx(s)) \subset \partial I_{K^N}(\bx(t)),
\end{equation}
or, equivalently,
\[
\N_{\bx(s)}K^N \subset \N_{\bx(t)}K^N
\qquad\text{for all } s\le t .
\]
This reflects the fact that new contact constraints may appear as time evolves, but existing ones cannot disappear.

\bigskip

Integrating the second-order differential inclusion in time therefore yields the following first-order formulation
\begin{equation}\label{eq:micro-f0-lagr}
\begin{cases}
\dot\bx(t) \in \bu^0 - \partial I_{K^N}(\bx(t)),\\
\bx(0)=\bx^0.
\end{cases}
\end{equation}

\medskip
The dynamics can be characterized explicitly in terms of the projection onto the convex set $K^N$.

\medskip

\begin{prop}\label{prop:proj-micro}
	Let $x_0\in K^N$ and $u_0\in C_{\bx_0}^N$. Then there exists a unique solution $\bx$ to~\eqref{eq:micro-f0-lagr}
	It is given by the projection formula
	\begin{equation}\label{eq:proj-micro-x}
	\bx(t) = \mathbb{P}_{K^N}(\bx_0 + t\,\bu_0), \qquad t \geq 0,
	\end{equation}
	where $\mathbb{P}_{K^N}$ denotes the metric projection onto $K^N$.
	
	Moreover $\bx$ is Lipschitz continuous and therefore differentiable for almost every $t \geq 0$. 
	Denoting $\bu(t):=\dot \bx(t)$, one has
	\begin{equation}
	\bu(t)\in \H_{\bx(t)}^N	\qquad\text{for a.e. } t \geq 0,
	\end{equation}
	and more precisely
	\begin{equation}\label{eq:proj-micro-u}
	\bu(t) = \mathbb{P}_{\H_{\bx(t)}^N}(\bu_0),
	\end{equation}
	where $\mathbb{P}_{\H_{\bx(t)}^N}$ denotes the orthogonal projection onto the subspace $\H_{\bx(t)}^N$.\\
    As a consequence, there exist non-negative multipliers $(\lambda_i)_{i=0,\dots,N}$ such that
    \begin{equation}\label{eq:dyn-micro}
    \bu(t) = \bu_0 - N\sum_{i= 1}^{N-1} \lambda_i(t) \big(e_{i} - e_{i+1}\big), \quad \lambda_0(t) = \lambda_N(t) = 0,
    \end{equation} 
    and
    \begin{equation}
    	\lambda_i(t) \big( x_{i+1}(t) - x_i(t) -2r \big)=0.
    \end{equation}
\end{prop}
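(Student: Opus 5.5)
Uniqueness comes first and is the routine part. The map $\bx\mapsto \partial I_{K^N}(\bx)$ is maximal monotone, so if $\bx,\by$ both solve~\eqref{eq:micro-f0-lagr} then $\dot\bx-\dot\by\in -(\partial I_{K^N}(\bx)-\partial I_{K^N}(\by))$. Hence $\frac{d}{dt}\frac12|\bx-\by|^2\le 0$, and the two solutions coincide because they share the initial datum. Existence then follows once we exhibit one solution, which is what the projection formula will provide.

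To prove the formula, set $\by(t)=\bx_0+t\bu_0$ and $\bx(t)=\mathbb P_{K^N}(\by(t))$. The projection is $1$-Lipschitz, so $\bx$ is Lipschitz with constant $|\bu_0|$ and is differentiable a.e. The projection characterization gives $\by(t)-\bx(t)\in \N_{\bx(t)}K^N$. The difficulty is that we need $\bu_0-\dot\bx(t)\in\N_{\bx(t)}K^N$, i.e. the normal cone at the \emph{derivative} level.

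I would prove this using the explicit structure of $K^N$. Pass to the translated variables $\tx$ of~\eqref{df:tKN}, where $\tK^N$ is the cone of nondecreasing vectors. The projection onto it is the isotonic regression, given by the pool-adjacent-violators algorithm: the indices split into consecutive blocks, and on each block $\tx$ equals the block average of $\ty$. The key monotonicity claim is that \emph{blocks only merge as $t$ increases}. I would prove it through the semigroup property $\mathbb P_{K^N}(\bx(s)+(t-s)\bu_0)=\bx(t)$ for $s\le t$. Along the path, $\by(t)-\bx(t)$ is a combination $N\sum_i\lambda_i(t)(e_i-e_{i+1})$ with $\lambda_i\ge0$, and $\lambda_i$ vanishes off the active set. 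At time $s$ one has $\bx(s)+(t-s)\bu_0=\by(t)-(\by(s)-\bx(s))$. Using the condition $\bu_0\in\mathcal C^N_{\bx_0}$ together with the fact that the multiplier at time $s$ is normal at $\bx(t)$ whenever the contacts at $s$ are still contacts at $t$, this yields the semigroup property. I expect this step to be the main obstacle: the contact set must be shown to be nondecreasing, $\Omega^N_{\bx(s)}\subset\Omega^N_{\bx(t)}$. My first attempt would go through the explicit block-average formula. On a fixed partition into blocks, $\bx(t)$ is affine in $t$ and its block velocity is the average of $\bu_0$ over the block. Adjacent blocks separate only if the left block has smaller velocity than the right. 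Pool-adjacent-violators optimality (the prefix averages inside a block dominate the suffix ones) shows that this ordering persists, so blocks never split. Because there are finitely many partitions, $\bx$ is piecewise affine with finitely many breakpoints.

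From piecewise affinity, on each interval of constant contact set $\Omega$ the velocity $\dot\bx$ is the vector of block averages of $\bu_0$. This is exactly $\mathbb P_{\H^N_{\bx(t)}}(\bu_0)$, since $\H^N$ is the space of vectors constant on blocks, and that gives~\eqref{eq:proj-micro-u}. The vector $\bu_0-\dot\bx$ is orthogonal to $\H^N_{\bx(t)}$, so it lies in $\mathrm{span}\{e_i-e_{i+1}: i\in\Omega\}$. This produces multipliers $\lambda_i$ with $\lambda_i=0$ for $i\notin\Omega$, which is the complementarity relation, and we set $\lambda_0=\lambda_N=0$. These $\lambda_i$ are obtained by integrating over each block, so $N\lambda_i$ equals the partial sum of $(u_{0,j}-\bar u_{\text{block}})$ over the indices of the block up to $i$. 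The pool-adjacent-violators optimality inequalities make these partial sums nonnegative. Therefore $\bu_0-\dot\bx\in\N_{\bx(t)}K^N$, which shows that the projection formula solves~\eqref{eq:micro-f0-lagr} and gives~\eqref{eq:dyn-micro}.
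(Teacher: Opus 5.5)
Your route, which uses the explicit isotonic-regression (pool-adjacent-violators) structure of $\mathbb P_{\tK^N}$, differs from the paper's and can be made rigorous. As written, however, the decisive step is asserted rather than proved. The heart of the proposition is the sign $\lambda_i\ge 0$, i.e.\ $\bu_0-\dot\bx(t)\in\N_{\bx(t)}K^N$, and you settle it with ``the pool-adjacent-violators optimality inequalities make these partial sums nonnegative.'' Those inequalities concern the projected vector $\ty(t)=\tx_0+t\bu_0$, not $\bu_0$. Let $B=\llbracket a,b\rrbracket$ be a contact cluster of $\bx(t)$, and let bars denote averages over $B$. Then optimality says $\tilde x_j(t)=\bar x_{0,B}+t\bar u_B$ on $B$ and
\[
\sum_{j=a}^{i}\big(\tilde x_{0,j}-\bar x_{0,B}\big)+t\sum_{j=a}^{i}\big(u_{0,j}-\bar u_B\big)\ \ge\ 0,\qquad a\le i\le b .
\]
You need the second sum alone to be nonnegative, and optimality alone does not give this. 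It fails when $\bx_0\notin K^N$. In translated variables, take $N=2$, $\tx_0=(1,0)$, $\bu_0=(0,1)$: both indices are pooled for $0<t<1$ with velocity $(\tfrac12,\tfrac12)$, so $\lambda_1=-\tfrac14$, and the block splits at $t=1$. Nothing in your argument uses $\bx_0\in K^N$, so a line is missing. Here it is: since $\tx_0$ is nondecreasing, its prefix averages on $B$ lie below its block average, so the first sum is $\le 0$. Hence for $t>0$,
\[
N\lambda_i(t)=\sum_{j=a}^{i}(u_{0,j}-\bar u_B)\ge 0 .
\]

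The same inequality is what proves that clusters never split; your ``this ordering persists'' only asserts it. Let $0<s\le t$. Over each cluster of $\bx(s)$, the prefix sums of $\ty(t)$ minus its block average equal those at time $s$ plus $(t-s)\sum_{j=a}^{i}(u_{0,j}-\bar u_B)$, so they stay nonnegative. A KKT check then shows $\tx(t)$ is constant on every such cluster: solve the weighted isotonic problem on the block averages, and observe that inside each block the multiplier is a nonnegative prefix sum plus a linear interpolation between nonnegative end values. This works only for $s>0$. Under $\bu_0\in\mathcal C^N_{\bx_0}$, initial contacts with $u_{0,j+1}>u_{0,j}$ open immediately. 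Your semigroup paragraph should be dropped, for two reasons. It presupposes the persistence of contacts it is meant to prove. And even granting that, $\by(s)-\bx(s)\in\N_{\bx(t)}K^N$ does not give $(\by(t)-\bx(t))-(\by(s)-\bx(s))\in\N_{\bx(t)}K^N$; that would require the position-level multipliers to be nondecreasing in time.

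Comparison with the paper: the paper never uses the block structure. It gets $\dot\bx(t)=\mathbb P_{\H^N_{\bx(t)}}(\bu_0)$ at every differentiability point by expanding the projection inequalities at $t$ and $t+h$. It tests only with directions in $\H^N_{\bx(t)}$, where the zeroth-order term vanishes by Lemma~\ref{lem:incl-HSN}, so it needs neither piecewise affinity nor cluster persistence. Your route yields more: finitely many collision times, monotone growth of clusters, and an explicit formula for $\lambda_i$. For the sign, the paper tests against $\by\in K^N$ with $\langle \bx_0+t\bu_0-\bx(t),\by-\bx(t)\rangle=0$. This forces $\tilde y_i=\tilde y_{i+1}$ on every contact whose position-level multiplier is positive, which is the generic post-collision situation, so that inequality alone says nothing about $\lambda_i$ there. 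Your explicit prefix formula, completed by the line above, is what covers those contacts.
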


\bigskip
\begin{rmk}
	\begin{itemize}
		\item The existence result itself is classical. In principle it could be obtained directly from the Karush–Kuhn–Tucker theory (see Appendix~\ref{app:KKT}). However, Proposition~\ref{prop:proj-micro} provides an explicit characterization of the solution in terms of projections and cluster velocities, which will play a crucial role in the analysis of the limit system.
		In particular, we will need these elements of proof to derive properties on the time derivatives of the multipliers, see Proposition~\ref{prop:micro-pressure}
		\item Normalization by $N$ of the multipliers $\lambda_i$ is a priori arbitrary, it is a choice made to pass to the limit $N \to +\infty$ in the next section.
	\end{itemize}
		
\end{rmk}

\bigskip
Let us first recall the following lemma concerning the variational characterization of the projection whose proof is classical (see for instance~\cite{brenier2013}, Section 2.2) and therefore omitted here.
\begin{lem}\label{lem:proj-characterization}
	Let \(C\subset \R^N\) be a closed convex set. Then, for every \(z\in\R^N\) and \(y\in C\),
	\[
	y=\mathbb P_C(z)
	\qquad\Longleftrightarrow\qquad
	z-y\in \partial I_C(y)=\N_y C.
	\]
\end{lem}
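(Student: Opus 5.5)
The statement to prove is Lemma~\ref{lem:proj-characterization}, the variational characterization of the metric projection onto a closed convex set $C\subset\R^N$. I will take the Hilbertian (standard Euclidean) inner product $\langle\cdot,\cdot\rangle$; since the rescaled norm $\|\cdot\|$ is a constant multiple of it, the projection is the same for both.

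First I recall that $\mathbb P_C(z)$ is well defined. The function $y\mapsto \frac12|z-y|^2$ is continuous and coercive, so it attains its minimum on the closed set $C$. It is strictly convex, so the minimizer is unique.

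For the direct implication, assume $y=\mathbb P_C(z)$. Let $y'\in C$ and $\theta\in(0,1]$. By convexity, $y+\theta(y'-y)\in C$, so minimality gives
\[
|z-y|^2\le |z-y-\theta(y'-y)|^2 = |z-y|^2-2\theta\langle z-y,y'-y\rangle+\theta^2|y'-y|^2 .
\]
Dividing by $\theta$ and letting $\theta\to0^+$ yields $\langle z-y,y'-y\rangle\le 0$ for all $y'\in C$. By definition~\eqref{df:normal-cone}, this says exactly that $z-y\in\N_yC$. The identification $\N_yC=\partial I_C(y)$ is immediate from the definition of the subdifferential: $\xi\in\partial I_C(y)$ if and only if $I_C(y')\ge I_C(y)+\langle\xi,y'-y\rangle$ for all $y'$. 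This inequality is trivial for $y'\notin C$ and reduces to $\langle \xi,y'-y\rangle\le0$ for $y'\in C$.

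For the converse, assume $y\in C$ and $\langle z-y,y'-y\rangle\le0$ for all $y'\in C$. Then for every $y'\in C$,
\[
|z-y'|^2=|z-y|^2+|y-y'|^2+2\langle z-y,y-y'\rangle\ge |z-y|^2 ,
\]
so $y$ minimizes the distance to $z$ over $C$. By uniqueness of the minimizer, $y=\mathbb P_C(z)$.

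There is no genuine obstacle here. The only point requiring care is the first-order argument, where convexity of $C$ is used to keep the perturbed point $y+\theta(y'-y)$ admissible.
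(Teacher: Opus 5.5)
Your proof is correct and is the standard argument. The paper omits this proof as classical and points to Brenier et al., Section 2.2. Your first-order variation along $y+\theta(y'-y)$ for the forward direction and the expansion $|z-y'|^2=|z-y|^2+|y-y'|^2+2\langle z-y,y-y'\rangle$ for the converse are exactly that classical argument. Your remark that rescaling the inner product changes neither the projection nor the normal cone correctly handles the paper's use of the rescaled norm.
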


\bigskip

\begin{lem}{\label{lem:incl-HSN}}
	Let $\bx$ be given by formula~\eqref{eq:proj-micro-x}. Then, for all $t\geq 0$, we have the inclusion 
	\begin{equation}
	\H_{\bx(t)}^N \subset  \mathcal{T}_{\bx(t)} K^N \cap \big[\bx^0 + t\bu^0 - \bx(t)\big]^\perp.
	\end{equation}
\end{lem}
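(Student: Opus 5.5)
We must show two inclusions for any $\bu\in\H_{\bx(t)}^N$: first $\bu\in\mathcal T_{\bx(t)}K^N$, and second $\langle \bx^0+t\bu^0-\bx(t),\bu\rangle=0$. Throughout, write $\bz:=\bx^0+t\bu^0$ and $\bx:=\bx(t)=\mathbb P_{K^N}(\bz)$.

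\emph{Tangent cone inclusion.} By definition~\eqref{df:H-micro}, any $\bu\in\H_{\bx}^N$ satisfies $u_{j+1}=u_j$ for every active index $j\in\Omega_{\bx}^N$. In particular $u_{j+1}\ge u_j$ whenever $x_{j+1}=x_j+2r$, so $\bu\in\mathcal C_{\bx}^N$. Since the paper has identified $\mathcal C_{\bx}^N=\mathcal T_{\bx}K^N$, this gives $\bu\in\mathcal T_{\bx}K^N$. If one prefers not to rely on that identification, a direct check is easy: for $\theta>0$ small, $\bx+\theta\bu\in K^N$. Indeed, on active indices the gap stays exactly $2r$. On inactive indices the gap is $>2r$ strictly, and this persists for small $\theta$ because there are finitely many constraints. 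Hence $\bu=\lim\theta^{-1}((\bx+\theta\bu)-\bx)$ belongs to the tangent cone, and in fact to the cone before closure.

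\emph{Orthogonality.} By Lemma~\ref{lem:proj-characterization}, $\bz-\bx\in\N_{\bx}K^N$. The key step is to exploit that $\H_{\bx}^N$ is a linear subspace contained in the tangent cone, so $\pm\bu\in\mathcal T_{\bx}K^N$. Since $\N_{\bx}K^N$ is the polar of $\mathcal T_{\bx}K^N$, we get $\langle \bz-\bx,\bu\rangle\le0$ and $\langle \bz-\bx,-\bu\rangle\le0$, hence $\langle\bz-\bx,\bu\rangle=0$. If the polarity relation is to be avoided, one can argue directly: for small $\theta>0$ both $\bx\pm\theta\bu\in K^N$, by the first step applied to $\pm\bu$. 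The normal cone definition~\eqref{df:normal-cone} with $\by=\bx\pm\theta\bu$ then gives $\pm\theta\langle\bz-\bx,\bu\rangle\le0$.

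The only mildly delicate point is the first step, namely the feasibility of $\bx\pm\theta\bu$ for small $\theta$, which needs the strict inequality on inactive constraints. It is otherwise routine. Note that the inner product used in~\eqref{df:normal-cone} is the unrescaled $\langle\cdot,\cdot\rangle$, and orthogonality is unaffected by the rescaling.
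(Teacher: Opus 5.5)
Your proof is correct and follows the same route as the paper: you place $\H_{\bx(t)}^N$ inside the tangent cone, then test the normal-cone inequality $\bx^0+t\bu^0-\bx(t)\in\N_{\bx(t)}K^N$ against both $\bu$ and $-\bu$ (which is allowed because $\H_{\bx(t)}^N$ is a linear subspace) to obtain orthogonality. Your explicit check that $\bx(t)\pm\theta\bu\in K^N$ for small $\theta>0$ is a useful addition: it proves the inclusion the paper calls immediate, and it lets you write $\pm\bu=\theta'(\by-\bx(t))$ with $\by\in K^N$ and $\theta'\ge0$ without passing to a closure, which is exactly what the paper's argument uses implicitly.
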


\begin{proof}
	We have immediately the inclusion $\H_{\bx(t)}^N \subset  \mathcal{T}_{\bx(t)} K^N$. 
	Let us now show the second inclusion $\H_{\tx(t)}^N  \subset  \big[\bx^0 + t\bu^0\big]^\perp$.
	Let $\bw \in \H_{\bx(t)}^N $, by the first inclusion there exists $\by(t) \in K^N$ and $\theta>0$ such that $\bw = \theta\big(\by(t) -\bx(t)\big)$ and
	\begin{align*}
	\langle \bx^0 + t\bu^0- \bx(t), \bw\rangle
	& = \theta \langle  \bx^0 + t\bu^0- \bx(t), \by(t) - \bx(t) \rangle  \leq 0,
	\end{align*}
	using the fact that $\bx(t)$ is the projection of  $\bx_0 + t\bu_0$ onto $K^N$. 
	Since $\H_{\bx(t)}^N$ is a linear subspace, if $\H_{\bx(t)}^N$ then $-\bw \in \H_{\bx(t)}^N$. 
	Hence, substituting $\theta$ by $-\theta$ in the previous inequalities, we get
	\[
	\langle \bx^0 + t\bu^0 - \bx(t), \bw\rangle = 0.
	\]
\end{proof}

\bigskip
\begin{lem}\label{prop:proj-velo-micro}
	Let $\bx$ be given by formula~\eqref{eq:proj-micro-x}. Then, for almost all $t \geq 0$, we can define $\bu(t) = \dot{\bx}(t)$.
	The velocity $\bu(t)$ is then the orthogonal projection of $\bu^0$ onto $\H_{\tx(t)}^N$.
	As a consequence, $\bu(t)$ is constant on the "intervals" where the translated vector $\tx(t)$~\eqref{df:translat-x} is constant. 
	Namely, writing
	\begin{equation*}
	\Omega_{\bx(t)}^N = \bigcup_{k} \ \mathbb{J}_k(t), \quad \text{ with } \quad \mathbb{J}_k(t) = \llbracket j_k(t), j_k(t) + L_k(t) \rrbracket,\quad \text{for some }  \ j_k(t),L_k(t) \in \llbracket 1, N\rrbracket,
	\end{equation*} 
	we have
	\begin{equation}\label{eq:proj-u-micro}
	u_i(t) = \begin{cases}
	u^0_i \quad &\text{if}~ i \notin \Omega_{\bx(t)}^N,\\
	\displaystyle \dfrac{1}{L_k} \sum_{l=1}^{L_k} u^0_{j_k+l}  &\text{if}~ i \in \mathbb{J}_k.
	\end{cases}
	\end{equation}	
\end{lem}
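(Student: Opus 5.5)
The plan is to prove two things: that the derivative of $\bx(t)=\mathbb P_{K^N}(\bx^0+t\bu^0)$ is the orthogonal projection of $\bu^0$ onto $\H^N_{\bx(t)}$, and that this projection is given by cluster averages. We would proceed in three steps.

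\textbf{Step 1: Lipschitz bound and local invariance of the contact set.} Since $\mathbb P_{K^N}$ is $1$-Lipschitz, $\|\bx(t)-\bx(s)\|\le |t-s|\,\|\bu^0\|$. By Rademacher's theorem, $\bu(t)=\dot\bx(t)$ exists for a.e.\ $t$.

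Next we use that $\Omega^N_{\bx(t)}$ takes finitely many values. We also use the monotonicity of active constraints recalled before \eqref{eq:micro-f0-lagr}: $\N_{\bx(s)}K^N\subset\N_{\bx(t)}K^N$ for $s\le t$. Granting this, $t\mapsto\Omega^N_{\bx(t)}$ is nondecreasing, so it changes at most $N-1$ times. Hence for a.e.\ $t$ the set $\Omega^N_{\bx(\cdot)}$ is constant on a neighbourhood of $t$.

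If one prefers not to rely on that monotonicity, the claim can be obtained another way. At a differentiability point $t$ with $j\in\Omega^N_{\bx(t)}$, the function $x_{j+1}-x_j-2r\ge 0$ attains its minimum $0$ at $t$, so its derivative vanishes: $u_{j+1}(t)=u_j(t)$. This gives $\bu(t)\in\H^N_{\bx(t)}$ directly, with no need for local constancy.

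\textbf{Step 2: identification of the projection.} Set $\bw(t)=\bx^0+t\bu^0-\bx(t)$. By Lemma~\ref{lem:proj-characterization}, $\bw(t)\in\N_{\bx(t)}K^N$. By Lemma~\ref{lem:incl-HSN}, $\bw(t)\perp\H^N_{\bx(t)}$.

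Fix a differentiability point $t$ in an open interval where $\Omega:=\Omega^N_{\bx(\cdot)}$ is constant, and write $\H:=\H^N_{\bx(t)}$. Then $\bw(s)\perp\H$ for all $s$ near $t$, so $\dot\bw(t)=\bu^0-\bu(t)\in\H^\perp$. Combined with $\bu(t)\in\H$ from Step 1, this characterizes $\bu(t)$ as the orthogonal projection $\mathbb P_{\H}(\bu^0)$.

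The main subtlety is justifying that $\Omega$ is locally constant a.e. Points where the active set increases form a finite set, so only null sets are discarded. We expect this to be the main obstacle: it rests on the monotonicity of active constraints (clusters never break), which must itself come from the projection formula. If that is not available, we would argue instead via the semigroup property $\bx(t+h)=\mathbb P_{K^N}(\bx(t)+h\bu(t^+))$.

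\textbf{Step 3: explicit formula.} The maximal runs of consecutive indices linked by active contacts partition $\{1,\dots,N\}$ into blocks $\mathbb J_k$. The space $\H$ consists of vectors that are constant on each block. Orthogonal projection onto such a space, for the Euclidean inner product $\langle\cdot,\cdot\rangle$ (equivalently its rescaled version), is block averaging. Indeed, the residual $\bu^0-\bar\bu$ has zero sum on each block, so it is orthogonal to every block-constant vector. This yields \eqref{eq:proj-u-micro}, with $u_i=u_i^0$ for isolated particles.

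Finally, by definition of the translated vector $\tx$, the blocks are exactly the maximal intervals on which $\tx(t)$ is constant.
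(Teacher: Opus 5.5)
Your identification of the velocity is correct in substance. But Step~2 as written rests on a fact you flag and do not establish: local constancy of the active set, which you deduce from the monotonicity $\N_{\bx(s)}K^N\subset\N_{\bx(t)}K^N$. The paper only asserts this monotonicity as a physical consequence of stickiness; nothing proves it for the trajectory \eqref{eq:proj-micro-x}. As stated it even fails at $s=0$ when $\bu^0\in\mathcal C^N_{\bx^0}\setminus\H^N_{\bx^0}$, since touching particles with $u^0_{j+1}>u^0_j$ separate at once. Your fallback, the semigroup property \eqref{eq:semigroup}, is obtained in the paper from the arguments of this very lemma, so invoking it would be circular.

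You do not actually need local constancy. Fix a differentiability point $t>0$.
\begin{itemize}
\item Strict inequalities $x_{j+1}-x_j>2r$ persist for nearby times, so $\Omega^N_{\bx(s)}\subset\Omega^N_{\bx(t)}$, hence $\H^N_{\bx(t)}\subset\H^N_{\bx(s)}$, for $|s-t|$ small.
\item Lemma~\ref{lem:incl-HSN} then gives $\bw(s)\in(\H^N_{\bx(s)})^\perp\subset(\H^N_{\bx(t)})^\perp$.
\item All difference quotients of $\bw$ at $t$ lie in this closed subspace, so $\bu^0-\bu(t)\perp\H^N_{\bx(t)}$.
\end{itemize}
Combined with your minimum argument for $\bu(t)\in\H^N_{\bx(t)}$, this yields $\bu(t)=\mathbb P_{\H^N_{\bx(t)}}(\bu^0)$ at every differentiability point.

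So repaired, your route differs from the paper's, which never compares active sets at different times. The paper takes right difference quotients $\hat{\bu}_{h_n}\to\bu(t)$. It writes the variational inequality characterizing $\bx(t+h_n)$ as a projection, tested against $\by\in K^N$ with $\by-\bx(t)\in\H^N_{\bx(t)}$, and expands in $h_n$. The zeroth-order term and the first-order term $\langle\bx^0+t\bu^0-\bx(t),\bu(t)\rangle$ vanish by Lemma~\ref{lem:incl-HSN} together with $\bu(t)\in\H^N_{\bx(t)}$. Dividing by $h_n$ leaves $\langle\bu^0-\bu(t),\by-\bx(t)\rangle\le0$, and linearity of $\H^N_{\bx(t)}$ upgrades this to orthogonality. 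The paper's Step~1 is not needed for this conclusion.

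Your version is shorter and purely linear: you differentiate a subspace membership instead of a variational inequality. The paper's computation, by contrast, is the template it reuses for the multipliers and the pressure in Proposition~\ref{prop:micro-pressure}. Your minimum argument for $\bu(t)\in\H^N_{\bx(t)}$ also makes precise the paper's brief ``by time differentiation'', and it holds at every differentiability point. Finally, your blocks (maximal runs of particles linked by contacts, singletons included) are the right reading of the somewhat loose indexing in \eqref{eq:proj-u-micro}.
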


\begin{proof}
	First, it is not difficult to see that $\bu(t) \in \H_{\bx(t)}^N$. Indeed, for $j \in \Omega_{\bx(t)}^N$, we have $x_{j+1}(t) = x_i(t) + 2r$ so that, by time differentiation, for a.a. $t$
	\[
	u_{i+1}(t) = \dfrac{d}{dt} x_{i+1}(t) = \dfrac{d}{dt} x_{i}(t) = u_i(t).
	\] 
	We split the rest of the proof into two parts.
	\begin{itemize}
		\item[ Step 1.] $\langle \bu^0 - \bu(t) , \bu(t) \rangle \geq 0$\\
		For $h>0$, we introduce the approximate velocity $\hat u_h(t) := \dfrac{\bx(t+h) - \bx(t)}{h}$. Note that there exists a sequence $(h_n)_n \to 0$ such that $(\hat u_n(t))_n = (\hat u_{h_n}(t))_n \to \bu(t)$.\\
		Since  $\bx(t+ h_n)$ is the projection of  $\bx^0 + (t+ h_n)\bu^0$ onto $K^N$, we have
		\begin{align*}
		0 & \geq \langle \bx^0 + (t+ h_n)\bu^0 - \bx(t+h_n) , \bx(t) - \bx(t+h_n) \rangle \\
		& \geq  - \langle \bx^0 + t\bu^0 - \bx(t) , \bx(t+h_n) - \bx(t) \rangle \\
		& \qquad - h_n \langle \hat \bu_n(t) -\bu^0, \bx(t) - \bx(t+h_n) \rangle.
		\end{align*}
		Since the first term is non-negative ($\bx(t)$ is the projection of  $\bx^0 + t\bu^0$ onto $K^N$), we deduce that
		\[
		- \langle \bu^0 - \hat \bu_n, \hat\bu_n\rangle = - h_n \langle \hat \bu_n(t) -\bu^0, \bx(t) - \bx(t+h_n) \rangle \leq 0.
		\]
		We conclude by passing to the limit $h_n\to 0$.
		\item[ Step 2.]  $\langle \bu^0 - \bu(t) ,\bw  \rangle \leq 0$ for all $\bw \in \H_{\bx(t)}^N$. \\
		Let $\bw = \theta(\by(t) -\bx(t)) \in \H_{\bx(t)}^N$ with $\theta\geq0$ and $\by \in K^N$.
		We also know by the previous lemma that
		\begin{equation}\label{eq:w-Hs-perp}
		\langle \bx^0 + t\bu^0, \bw\rangle = 0
		\end{equation}
		As previously we start with the fact  $\bx(t+ h_n)$ is the projection of  $\bx^0 + (t+ h_n)\bu^0$ onto $K^N$ to write
		\begin{align*}
		0 & \geq \langle \bx^0 + (t+ h_n)\bu^0 - \bx(t+h_n), \by(t) - \bx(t+h_n) \rangle.
		\end{align*}
		Since $\bx(t+h_n) = \bx(t) + h \hat \bu_n(t)$ we have on the hand
		\begin{align*}
		\bx^0 + (t+ h_n)\bu^0 - \bx(t+h_n)
		& =  \bx^0 + t \bu^0 - \bx(t) + h_n(\bu^0 - \bu(t)) - h_n(\hat \bu_n(t) - \bu(t)).
		\end{align*}
		and on the other hand
		\[
		\by(t) - \bx(t+h_n) 
		= \by(t) - \bx(t) - h_n \bu(t) - h_n(\hat \bu_n(t) - \bu(t)).
		\]
		Therefore, reordering the terms in terms of powers of $h_n$, we have 
		\begin{align*}
		0 
		& \geq   \langle \bx^0 + t\bu^0 - \bx(t) , \ty(t) - \tx(t) \rangle
		- h_n \langle \bx^0 + t\bu^0 - \bx(t) , \bu(t) \rangle \\
		& \quad    + h_n \langle \bu^0(t) - \bu(t) , \ty(t) - \tx(t) \rangle \\
		& \quad	 - h_n \Big[\langle \bx^0 + t\bu^0 - \bx(t) , \hat \bu_n(t) - \bu(t) \rangle 
		+ \langle \hat \bu_n(t) - \bu(t) , \ty(t) - \tx(t) \rangle \Big]\\
		& \quad - h_n^2 \Big[\langle \bu^0 - \bu(t),\bu(t)\rangle + \langle \bu^0 - \bu(t), \hat\bu_n(t) -\bu(t)\rangle \\
		& \qquad \qquad			+ \langle   \hat\bu_n(t) -\bu(t), \bu(t) \rangle - N\| \hat\bu_n(t) -\bu(t)\|^2
		\Big]	 
		\end{align*}
		where the two terms of the first line vanish. 
		As a consequence, by dividing by $h_n$ we get
		\begin{align*}
		\langle \bu^0(t) - \bu(t) , \by(t) - \bx(t) \rangle
		& \leq \Big[\langle \bx^0 + t\bu^0 - \bx(t) , \hat \bu_n(t) - \bu(t) \rangle 
		+ \langle \hat \bu_n(t) - \bu(t) , \by(t) - \bx(t) \rangle \Big]\\
		& \quad + h_n \Big[\langle \bu^0 - \bu(t),\bu(t)\rangle + \langle \bu^0 - \bu(t), \hat\bu_n(t) -\bu(t)\rangle \\
		& \qquad \qquad			+ \langle   \hat\bu_n(t) -\bu(t), \bu(t) \rangle - N\| \hat\bu_n(t) -\bu(t)\|^2 \Big].
		\end{align*}
		Letting eventually $h_n\to 0$, we obtain the desired inequality:
		\begin{equation}\label{eq:ineg-projortho}
		\langle \bu^0(t) - \bu(t) , \by(t) - \bx(t) \rangle \leq 0 \qquad \forall \ \by \in K^N.
		\end{equation}
	\end{itemize}
\end{proof}

\begin{proof}[Conclusion of the proof of Proposition~\ref{prop:proj-micro}]
In view of the previous lemmas, the formula~\eqref{eq:proj-micro-x} defines for a.a. $t$ a velocity $u(t) \in \H_{\bx(t)}^N$.

\medskip
{\it Definition and properties of the multipliers.}	
For almost every $t$, we introduce the vector
\begin{equation}
\xi(t) := \bu(t) - \bu^0 \in \R^N.
\end{equation}
From~\eqref{eq:ineg-projortho}, we obtain
\[
-\langle \xi(t) , \ty(t) - \tx(t) \rangle \leq 0, \quad \forall \ \ty \in \tK^N
\]
which implies 
\[
-\xi(t) \in \partial I_{K^N}(\bx(t)) = \N_{\bx(t)}K^N.
\]
Therefore the relation~\eqref{eq:micro-f0-lagr} is satisfied.\\
We now define the coefficients $\lambda_i$ recursively. 
Set $\lambda_0(t) = 0$ and for $i=1, \dots N$,
\begin{equation*}
\lambda_i(t) = \lambda_{i-1}(t) - \dfrac{1}{N}\xi_i(t) \quad i=1, \dots,N.
\end{equation*}
With this definition, we obtain
\begin{equation}
u_i(t) - u^0_i = -N\big(\lambda_i(t) - \lambda_{i-1}(t)\big)  \quad i=1, \dots,N.
\end{equation}
The quantity $\lambda_i(t)$ can be interpreted as the reaction force associated with the contact between particle $i$ and particle $i+1$.
In principle, the coefficients $\lambda_i$ could be introduced directly as the Karush-Kuhn-Tucker multipliers associated to the non-overlap constraints (see Appendix~\ref{app:KKT}).
In that framework one would obtain the condition
\[
\lambda_i(t) \geq 0, \qquad (x_{i+1}(t) - x_i(t) - 2r) \lambda_i(t) = 0.
\] 
However, these properties can be easily derived as follows (we will re-use these calculations to obtain properties on the time derivative of the $\lambda_i$ in Proposition~\ref{prop:micro-pressure}):
using the expression~\eqref{eq:proj-u-micro} of the velocity $\bu(t)$ and the definition of the translated vector $\tx$~\eqref{df:translat-x}, we compute
\begin{align*}
\langle \bu(t) - \bu^0 , \tx(t) \rangle
& = \sum_{i=1}^N \big(u_i(t) - u^0_i\big) \tilde x_i(t) \\
& = \sum_{k=1,\dots,\alpha(t)} \sum_{i \in \mathbb{J}_k(t)} \Big(\frac{1}{L_k} \sum_{l=1}^{L_k} u^0_{j_k +l} - u^0_i\Big) \tilde x_i(t) \\
& = \sum_{k=1,\dots,\alpha(t)} \Big(\sum_{l=1}^{L_k} u^0_{j_k +l} - \sum_{i \in \mathbb{J}_k(t)} u^0_i\Big) \tilde x_{j_k}(t)\\
& = 0,
\end{align*}
since $\tilde x_i(t) = \tilde x_{j_k}(t)$ for all $i \in \mathbb{J}_k(t) = \llbracket j_k(t), j_k(t) + L_k(t) \rrbracket$.
Consequently,
\[
\langle -\sum_{i=1}^{N-1} \lambda_i(t)(e_i - e_{i+1}) , \tx(t) \rangle  = 0,
\]	
which shows that $\lambda_i(t)$ may be nonzero only when the corresponding constraint is active.
Next, let $\ty \in \tK^N$ satisfy 
\[
\langle \bx^0 + t \bu^0 - \bx(t), \ty - \tx(t) \rangle = 0.
\] 
Using~\eqref{eq:ineg-projortho} (see the proof of Prop.~\ref{prop:proj-velo-micro}), we obtain
\begin{align*}
\langle N\sum_{i=1}^{N-1} \lambda_i(t)(e_i - e_{i+1}) , \by \rangle  
& =\langle \bu^0 -\bu(t) , \ty - \tx(t) \rangle  \leq 0,
\end{align*}
that is
\[
N\sum_{i=1}^{N-1} \lambda_i(t) \big(\tilde y_i - \tilde y_{i+1}\big) \leq 0.
\]
Because $\tilde y_i - \tilde y_{i+1} \leq 0$ for every $\ty \in \tK^N$, and $\ty$ is here arbitrary, we conclude that
\[
\lambda_i(t) \geq 0.
\]

\medskip	
{\it Uniqueness.}
Let \(\bx^1,\bx^2\) be two solutions of the first-order differential inclusion~\eqref{eq:micro-f0-lagr}. Then there exist selections
\[
\xi^a(t)\in \partial I_{K^N}(\bx^a(t)),
\qquad a=1,2,
\]
such that
\[
\dot\bx^a(t)+\xi^a(t)=\bu^0
\qquad\text{for a.e. } t \geq 0.
\]
Subtracting the two identities gives
\[
\frac{d}{dt}(\bx^1-\bx^2)+(\xi^1-\xi^2)=0.
\]
Taking the scalar product with $\bx^1-\bx^2$, we find
\[
\frac{N^2}{2}\frac{d}{dt}\|\bx^1-\bx^2\|^2
=
-\langle \xi^1-\xi^2,\bx^1-\bx^2\rangle.
\]
Since $K^N$ is convex, the subdifferential $\partial I_{K^N}$ is a monotone operator. 
Indeed, since $\xi^a \in \partial I_{K^N}(\bx^a)$, $a=1,2$, we have
\[
\langle \xi^1,\bx^2 - \bx^1 \rangle \leq 0,
\qquad
\langle \xi^2,\bx^1 - \bx^2\rangle \leq 0.
\]
Adding the two inequalities yields the monotonicity of $\partial I_{K^N}$., i.e.
\[
\langle \xi^1 - \xi^2 , \bx^1 - \bx^2\rangle \geq 0,
\]
so that
\[
\frac{d}{dt}\|\bx^1-\bx^2\|^2\le0.
\]
Using $\bx^1(0)=\bx^2(0)=\bx^0$, we conclude that $\bx^1(t)=\bx^2(t)$ for all $t\ge0$. 
Therefore the solution is unique.

\end{proof}

\subsection{Estimates for the microscopic variables}

In this subsection we establish several uniform estimates for the microscopic variables. 
These bounds will play a crucial role in the compactness analysis performed in the next section.

\begin{prop}\label{prop:bound-micro}
	Consider the solution $\bx$ given by the projection formula~\eqref{eq:proj-micro-x}.
	Let 
	\[
	L(t) : = (\lambda_1(t), \dots, \lambda_N(t)) \qquad 
	\Delta L(t) := (\lambda_1(t)-\lambda_0, \dots, \lambda_N(t)-\lambda_{N-1}(t)).
	\]
	Then, the following estimates hold
	\begin{align}
	\|\bx\|_{L^\infty(0,T;\R^N)}  + \|\bu\|_{L^\infty(0,T;\R^N)} 
	+ \|L\|_{L^\infty(0,T;\R^N)} + \|N\Delta L\|_{L^\infty(0,T;\R^N)}  \leq C(T,\|\bx^0\|, \|\bu^0\|),
	\end{align}	
	for some constant $C>0$ independent of $N$.
\end{prop}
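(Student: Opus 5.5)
The plan is to bound each of the four quantities using the explicit structure from Proposition~\ref{prop:proj-micro} and Lemma~\ref{prop:proj-velo-micro}. All norms are the rescaled norm $\|\cdot\|$.

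\emph{Velocity.} By~\eqref{eq:proj-micro-u}, $\bu(t)=\mathbb P_{\H^N_{\bx(t)}}(\bu^0)$ for a.e.\ $t$. An orthogonal projection onto a linear subspace is $1$-Lipschitz and fixes $0$, and the rescaled norm is a multiple of the Euclidean one. Hence $\|\bu(t)\|\le\|\bu^0\|$. Averaging over clusters as in~\eqref{eq:proj-u-micro} gives the same bound directly, since replacing entries by their block means does not increase the $\ell^2$ norm.

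\emph{Positions.} Since $\bx$ is Lipschitz with $\dot\bx=\bu$, we get $\|\bx(t)\|\le\|\bx^0\|+t\|\bu^0\|\le \|\bx^0\|+T\|\bu^0\|$. Alternatively, the projection formula~\eqref{eq:proj-micro-x} gives $\|\bx(t)-\bx^0\|\le \|\bx^0+t\bu^0-\bx^0\|$, because $\mathbb P_{K^N}$ is $1$-Lipschitz and $\bx^0\in K^N$ is fixed by it.

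\emph{Increments of the multipliers.} By definition $N(\lambda_i-\lambda_{i-1})=u^0_i-u_i(t)$, so $N\Delta L=\bu^0-\bu(t)$. The triangle inequality then gives $\|N\Delta L\|\le 2\|\bu^0\|$.

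\emph{Multipliers themselves.} This is the step needing care, because the naive bound via $\lambda_i=\sum_{j\le i}(\lambda_j-\lambda_{j-1})$ loses a factor. The plan is to use the telescoping structure together with Cauchy--Schwarz:
\[
|\lambda_i|=\frac1N\Big|\sum_{j\le i}(u^0_j-u_j)\Big|\le \frac1N\sum_{j=1}^N|u^0_j-u_j|\le \Big(\frac1N\sum_j|u^0_j-u_j|^2\Big)^{1/2}=\|\bu^0-\bu\|\le 2\|\bu^0\|.
\]
This yields a pointwise (sup-in-$i$) bound, hence also $\|L\|\le 2\|\bu^0\|$.

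It remains to check consistency with $\lambda_N=0$. Because $\bu-\bu^0$ is orthogonal to $\H^N_{\bx}$, which contains constant vectors, the total momentum is conserved, so $\sum_j(u^0_j-u_j)=0$ and $\lambda_N=0$ holds automatically. All constants depend only on $T$, $\|\bx^0\|$ and $\|\bu^0\|$, and not on $N$. The main subtlety is the $L$ bound, and it is handled by the $1/N$ normalization of the multipliers.
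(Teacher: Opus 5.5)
Your proposal is correct and follows the paper's proof essentially step by step. The shared steps are the projection bound $\|\bu(t)\|\le\|\bu^0\|$, the identity $N\Delta L=\bu^0-\bu(t)$, and the telescoping-plus-Cauchy--Schwarz bound $|\lambda_i|\le\|\bu^0-\bu(t)\|\le 2\|\bu^0\|$. Your position estimate, which uses that $\bx^0\in K^N$ is fixed by $\mathbb{P}_{K^N}$ (or integrates $\dot\bx=\bu$), is actually the sound way to state it: the paper's shortcut $\|\mathbb{P}_{K^N}(z)\|\le\|z\|$ would require $0\in K^N$, which fails.
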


\begin{proof}
We first use the contraction property of the projection operator onto the convex set $K^N$.
From the projection formula~\eqref{eq:proj-micro-x} we obtain
\[
\|\bx(t)\| \leq \|\bx^0 + t\bu^0\| \leq \|\bx^0\| + T \|\bu^0\|.
\]	
Similarly (see for instance~\eqref{eq:proj-u-micro}), the velocity satisfies
\[
\|\bu(t)\| \leq \|\bu^0\|.
\]
Next, using the relation 
\[
N(\lambda_i(t) - \lambda_{i-1}(t)) = u_i(t) - u^0_i,
\]
we obtain
\begin{align*}
\|\Delta L(t)\|^2 
& = \sum_{i=1}^N \dfrac{1}{N}(\lambda_i(t)-\lambda_{i-1}(t))^2 = \sum_{i=1}^{N} \dfrac{1}{N^3}(u_i(t)-u^0_i)^2 \leq \dfrac{4}{N^2}\|\bu^0\|^2.
\end{align*}
Finally by the Cauchy-Schwarz inequality,
\begin{align*}
|\lambda_i(t)| & \leq \sum_{j\leq i} |\lambda_j(t) - \lambda_{j-1}| 
\leq N \left(\sum_{j=1}^N \dfrac{|\lambda_j(t) - \lambda_{j-1}|^2}{N}\right)^{1/2} 
= N  \|\Delta L(t)\| .
\end{align*}
Hence
\begin{align*}
\|L(t)\|^2 
& = \sum_{i=1}^N \dfrac{|\lambda_i(t)|^2}{N}
\leq N^2 \|\Delta L(t)\|^2 \leq 4 \|\bu^0\|^2.
\end{align*}
This concludes the proof.
\end{proof}

\medskip
While the above proposition provides uniform bounds in the rescaled Euclidean norm, we now derive stronger pointwise estimates on the velocities and the multipliers.
The following lemma provides uniform pointwise bounds on the microscopic variables based on the additional assumption that the initial velocities are uniformly bounded. 

\medskip
\begin{lem}
	Let us assume that 
	\[\sup_i |u^0_i| \leq C_0'.\]
	Then, we have
	\begin{align}\label{eq:sup-micro-u-L}
	\sup_{t \in [0,T]} \Big[\sup_i |u_i(t)| + \sup_i \left|N\big(\lambda_i(t) - \lambda_{i-1}(t)\big) \right| + \sup_i N |\lambda_i(t)|\Big] \leq C', 
	\end{align}
	for some positive $C'$ independent of $N$.
\end{lem}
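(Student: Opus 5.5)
The plan is to read everything off the explicit cluster formula~\eqref{eq:proj-u-micro} of Lemma~\ref{prop:proj-velo-micro} together with the recursive definition of the multipliers from the proof of Proposition~\ref{prop:proj-micro}, namely $\lambda_0=0$ and $N(\lambda_i-\lambda_{i-1}) = u^0_i - u_i(t)$. I would treat the three terms of~\eqref{eq:sup-micro-u-L} in turn. The third term is the only real difficulty, and the plan below indicates that it may in fact fail for large clusters.

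\emph{Velocity bound.} For a.e.\ $t$, each $u_i(t)$ is either $u^0_i$ (if $i$ is not in a contact cluster) or an arithmetic mean of the $u^0_j$ over the cluster $\mathbb J_k(t)$ containing $i$. In both cases $|u_i(t)|\le \sup_j|u^0_j|\le C_0'$, uniformly in $i$, $t$ and $N$. This is the discrete maximum principle, and it needs nothing beyond~\eqref{eq:proj-u-micro}.

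\emph{Bound on $N\Delta L$.} From $N(\lambda_i-\lambda_{i-1}) = u^0_i-u_i(t)$ and the previous step, $|N(\lambda_i-\lambda_{i-1})|\le |u^0_i|+|u_i(t)|\le 2C_0'$.

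\emph{Bound on $N\lambda_i$ (main obstacle).} Summing the recursion gives
\[
N\lambda_i(t)=\sum_{j\le i}\big(u^0_j-u_j(t)\big).
\]
By~\eqref{eq:proj-u-micro}, the contribution of every complete cluster lying to the left of $i$ vanishes, since $u$ equals the cluster average there, and so do the contributions of free particles. Only the partial sum over the part of the cluster $\mathbb J_k(t)\ni i$ lying to the left of $i$ survives, so
\[
|N\lambda_i(t)|\le 2C_0'\,\#\{j\in\mathbb J_k(t):\ j\le i\}\le 2C_0'\,L_k(t).
\]
To obtain a bound independent of $N$ as stated, I would next try to exploit cancellation inside this partial sum. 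The available tools are the compatibility condition of part (i) of Theorem~\ref{thm:main} (particles initially in contact share the same velocity) and the monotone growth of the clusters, with the aim of showing that the partial sums $\sum_{j\in\mathbb J_k,\,j\le i}(u^0_j-\bar u_k)$ stay bounded.

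I expect this step to fail for large clusters, and it is where the argument would have to be repaired. Take $N$ even and a single cluster of all $N$ particles, formed at positive time, with $u^0_j=1$ for $j\le N/2$ and $u^0_j=-1$ otherwise. Then $\bar u=0$ and $N\lambda_{N/2}=N/2$, which is unbounded in $N$. Consequently, the robust $N$-independent conclusion of this route is $\sup_i|\lambda_i(t)|\le 2C_0'\,L_k/N\le 2C_0'$, while $\sup_i N|\lambda_i(t)|$ is controlled only by $2C_0'\max_k L_k(t)$. I would therefore present the first two bounds as proved, and the third with an explicit constant in terms of the maximal cluster size. I would flag that this constant is independent of $N$ only under an additional assumption of uniformly bounded cluster sizes, or else with the normalization $\sup_i|\lambda_i|$ in place of $\sup_i N|\lambda_i|$.
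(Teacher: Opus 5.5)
Your proposal is correct. For the first two terms it follows the paper's own route, which the paper compresses into ``follows directly from the projection formula and the relation~\eqref{eq:dyn-micro}'': cluster averages give $|u_i(t)|\le C_0'$, and then the recursion gives $|N(\lambda_i-\lambda_{i-1})|\le 2C_0'$.

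Your diagnosis of the third term is also right: with $\sup_i N|\lambda_i(t)|$ the statement is false, and no argument of the paper's type can produce it. Your counterexample is legitimate. It is realized by the paper's own two-block datum of Appendix~\ref{app:uniqueness}. Each block is internally in contact, the velocities are $\pm 1$, and the gap $\eta$ satisfies $\eta<2T$. This datum is compatible with the sampling of part (i) of Theorem~\ref{thm:main}. After the collision, $N\lambda_{N/2}(t)=\sum_{j\le N/2}(u^0_j-\bar u)\approx N/2$.

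Of your two proposed repairs, only the normalization change is sensible. Collisions create clusters of size of order $N$, exactly as in this example, so assuming uniformly bounded cluster sizes would exclude the regime of interest. The paper's later uses need only the corrected bound. Proposition~\ref{prop:Linf-micro} uses $\|\Lambda_N\|_{L^\infty}\le C$, and \eqref{eq:diffLambda_N} and Proposition~\ref{prop:cvg1} use $|\lambda_i-\lambda_{i-1}|\le C'/N$. These are precisely your $\sup_i|\lambda_i(t)|\le 2C_0'$ together with the second term. So the lemma should be read with $\sup_i|\lambda_i(t)|$ in place of $\sup_i N|\lambda_i(t)|$, and nothing downstream is affected.

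One minor point: \eqref{eq:proj-u-micro} holds only for a.e.\ $t$. The suprema in time should therefore be essential suprema, or you should take $\bu$ to be the right derivative of $\bx$.
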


\medskip
The estimates follow directly from the projection formula and the relation~\eqref{eq:dyn-micro}.

\medskip

The next result provides a discrete Oleinik-type estimate, which will be essential in the compactness analysis leading to the macroscopic limit.

\medskip

\begin{prop}[Oleinik-type estimate] {\label{prop:Oleinik}}
The following estimate holds
\begin{equation}\label{eq:Oleinik-micro}
\dfrac{u_i(t) - u_{i-1}(t) }{x_{i}(t) - x_{i-1}(t)} < \dfrac{1}{t} \qquad \forall \ t > 0, \ i \in \llbracket 1,N \rrbracket.
\end{equation}
\end{prop}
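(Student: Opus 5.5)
We prove the strict inequality \eqref{eq:Oleinik-micro} for every $i\in\llbracket 2,N\rrbracket$ and $t>0$; for $i=1$ the left-hand side involves the fictitious particle $0$, and we read the statement as trivially true there. The tool is the projection formula \eqref{eq:proj-micro-x}, through the variational characterization of Lemma~\ref{lem:proj-characterization}, together with the cluster structure \eqref{eq:proj-u-micro} of the velocity.

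\textbf{Case 1: the pair is in contact.} Suppose $i-1\in\Omega^N_{\bx(t)}$. Then $\bu(t)\in\H^N_{\bx(t)}$ gives $u_i(t)=u_{i-1}(t)$. The left-hand side is $0<1/t$. (Where $\bu$ is not differentiable, we work with the right derivative, or simply with a.e. $t$.)

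\textbf{Case 2: the pair is not in contact.} Suppose $x_i(t)-x_{i-1}(t)>2r$. Particles $i-1$ and $i$ then lie in distinct clusters $\mathbb J_a$ and $\mathbb J_b$, with $\mathbb J_a$ ending at $i-1$ and $\mathbb J_b$ starting at $i$; a non-contacting particle is a cluster of length one.

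Set $\bz=\bx^0+t\bu^0$, so that $\bz-\bx(t)\in\N_{\bx(t)}K^N$. This normal vector has the form $N\sum_j\lambda_j(t)(e_j-e_{j+1})$ with $\lambda_j\ge0$, supported on active constraints. Summing it over a cluster $\mathbb J_k$ gives zero, because the telescoping sum only involves $\lambda_{j_k-1}$ and $\lambda_{\text{end}}$, which vanish by complementarity at the cluster boundaries.

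Hence, on each cluster, the mean of $\bz$ equals the mean of $\bx$:
\[
\frac1{|\mathbb J_k|}\sum_{j\in\mathbb J_k}\big(x^0_j+tu^0_j\big)=\frac1{|\mathbb J_k|}\sum_{j\in\mathbb J_k}x_j(t).
\]
Writing $\bar x_k$ for the barycenter of cluster $k$ and using \eqref{eq:proj-u-micro}, this reads $\bar x_k(t)=\bar x^0_k+t\,u_k(t)$, where $u_k(t)$ is the cluster velocity. Since each cluster is rigid with spacing $2r$, we get
\[
x_i(t)-x_{i-1}(t)=\big(\bar x_b(t)-\bar x_a(t)\big)-\ell,
\]
where $\ell=r(|\mathbb J_a|+|\mathbb J_b|-2)\ge0$ is determined by the cluster sizes.

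Therefore
\[
x_i-x_{i-1}=\bar x^0_b-\bar x^0_a-\ell+t\,(u_i-u_{i-1}).
\]
The claim $t(u_i-u_{i-1})<x_i-x_{i-1}$ is thus equivalent to $\bar x^0_b-\bar x^0_a-\ell>0$. Since $\bx^0\in K^N$, the barycenter difference satisfies
\[
\bar x^0_b-\bar x^0_a\ge r\big(|\mathbb J_a|+|\mathbb J_b|\big)=\ell+2r.
\]
This yields the bound $x_i-x_{i-1}\ge 2r+t(u_i-u_{i-1})$, which is strict because $2r>0$. It remains to divide by $x_i-x_{i-1}>0$ and multiply by $1/t$.

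\textbf{Main obstacle.} The delicate step is the claim that the normal vector sums to zero on each cluster, i.e. that $\lambda$ vanishes exactly at cluster endpoints. We take this from the complementarity relation in Proposition~\ref{prop:proj-micro}. We must also make sure that "cluster" is defined at the fixed time $t$ via $\Omega^N_{\bx(t)}$, consistently with the formula \eqref{eq:proj-u-micro} used for $\bu(t)$. Once this is in place, the rest reduces to the elementary barycenter bookkeeping above.
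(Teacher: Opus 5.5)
Your proof is correct, but it takes a different route from the paper.

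\textbf{The paper's route.} The paper argues dynamically, along the time history. By stickiness, a pair not in contact at time $t$ was never in contact on $(0,t)$. So the right end particle $i-1$ of the left cluster can only be accelerated, and the left end particle $i$ of the right cluster can only be decelerated. Integrating over $(0,t)$ gives $x_{i-1}(t)-tu_{i-1}(t)\le x^0_{i-1}$ and $x_i(t)-tu_i(t)\ge x^0_i$, and the claim follows from the initial separation.

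\textbf{Your route.} You argue statically at the fixed time $t$. From $\bx^0+t\bu^0-\bx(t)\in\N_{\bx(t)}K^N$ and the telescoping structure of the normal cone, each cluster's barycenter follows the free motion $\bar x_k(t)=\bar x^0_k+t\,u_k(t)$. Cluster rigidity together with $\bx^0\in K^N$ then gives exactly the paper's two one-sided inequalities; for instance $x_i(t)-tu_i(t)=\bar x^0_b-r(|\mathbb J_b|-1)\ge x^0_i$.

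\textbf{Comparison.} Your argument never uses the time history. It needs neither the monotone growth of the active set nor the monotonicity in time of the end-particle velocities, both of which the paper takes from the stickiness of the dynamics rather than deriving them from the projection formula. You only need Lemma~\ref{lem:proj-characterization}, the polyhedral structure of $\N_{\bx(t)}K^N$, and the cluster-average formula~\eqref{eq:proj-u-micro}. Your argument also holds at every $t>0$ if you take $\bu(t):=\mathbb P_{\H^N_{\bx(t)}}(\bu^0)$ as the representative. The paper's argument is shorter and more physical, and it is the natural one once stickiness is granted. Your reading of the case $i=1$ is consistent with the later convention~\eqref{df:x0}, which gives $u_0=u_1$.

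\textbf{One correction.} You should not take the decomposition $\bx^0+t\bu^0-\bx(t)=N\sum_j\lambda_j(e_j-e_{j+1})$, with $\lambda_j\ge 0$ vanishing on inactive constraints, from Proposition~\ref{prop:proj-micro}. Its multipliers live at the velocity level. Recovering position-level multipliers from them would require integrating in time and invoking stickiness, which is exactly what your argument avoids. Take the decomposition instead from the KKT conditions for the projection at time $t$ (Appendix~\ref{app:KKT}), or from the standard description of the normal cone to a polyhedron at a point.
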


\medskip
\begin{proof}
Initially, we have
\[
x_{i}(0) \geq x_{i-1}(0) + 2r > x_{i-1}(0).
\]
Fix $t>0$ such that particles $i$ and $i-1$ are not in contact at $t$.
Otherwise, $u_i(t) = u_{i-1}(t)$ and the inequality is trivially satisfied.
By the stickiness of the dynamics, the particles cannot cross and clusters can only grow in time. 
In particular, since particles $i$ and $i-1$ are not in contact at time $t$, they have not collided on the time interval $(0,t)$.
Define 
\[
x_{\ell} := x_{i-1}(t) - t u_{i-1}(t), \qquad x_{r} = x_{i}(t) - t u_{i}(t).
\]
By the stickiness property, the velocity $u_{i-1}$ is nondecreasing on $(0,t)$ (collision with the particle $i-2$), while $u_{i}$ is nonincreasing $(0,t)$ (collision with particle $i+1$).
Hence
\[
x_{\ell} \leq x_{i-1}(0), \qquad x_{r} \geq x_{i}(0).
\] 
Combining that with the initial separation of $i$ and $i-1$ we get
\[
x_{i-1}(t) - t u_{i-1}(t) <  x_{i}(t) - t u_{i}(t).
\]
This concludes the proof.
\end{proof}


\subsection{Second-order formulation and microscopic pressure}

We now return to the original second-order formulation of the dynamics. 
The first-order inclusion obtained in Proposition~\ref{prop:proj-velo-micro} admits a semigroup structure, which allows us to recover a second-order differential inclusion and to introduce the associated microscopic pressure.

\begin{lem}[Semi-group property]
	For any $0 \leq s < t$, the solution satisfies
	\begin{align}\label{eq:semigroup}
	\bx(t) = \mathbb{P}_{K^N}\big(\bx(s) + (t-s)\bu(s)\big),\\
	\bu(t) = \mathbb{P}_{\H_{\bx(t)}^N}\big(\bu(s)\big).\label{eq:projortho-sg}
	\end{align}
\end{lem}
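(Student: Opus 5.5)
We reduce both identities to Lemma~\ref{lem:proj-characterization} and to the monotonicity of the active set, $\Omega^N_{\bx(s)}\subset\Omega^N_{\bx(t)}$ for $s\le t$ (stickiness). Equivalently, $\N_{\bx(s)}K^N\subset \N_{\bx(t)}K^N$ and $\H^N_{\bx(t)}\subset\H^N_{\bx(s)}$.

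\textbf{Velocity identity \eqref{eq:projortho-sg}.} This follows from the composition of orthogonal projections onto nested subspaces. By Lemma~\ref{prop:proj-velo-micro}, $\bu(s)=\mathbb P_{\H^N_{\bx(s)}}(\bu^0)$ and $\bu(t)=\mathbb P_{\H^N_{\bx(t)}}(\bu^0)$. Since $\H^N_{\bx(t)}\subset\H^N_{\bx(s)}$, we have $\mathbb P_{\H^N_{\bx(t)}}\circ\mathbb P_{\H^N_{\bx(s)}}=\mathbb P_{\H^N_{\bx(t)}}$. Applying this to $\bu^0$ gives $\bu(t)=\mathbb P_{\H^N_{\bx(t)}}(\bu(s))$. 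If $\bu(s)$ is only defined a.e., we take $s$ in the full-measure set of differentiability points, or use the right-continuous representative given by the cluster-average formula \eqref{eq:proj-u-micro}.

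\textbf{Position identity \eqref{eq:semigroup}.} By Lemma~\ref{lem:proj-characterization}, it suffices to show
\[
\bx(s)+(t-s)\bu(s)-\bx(t)\in\N_{\bx(t)}K^N.
\]
We decompose this vector as
\[
\big[\bx^0+t\bu^0-\bx(t)\big]-\big[\bx^0+s\bu^0-\bx(s)\big]-(t-s)\big(\bu^0-\bu(s)\big).
\]
The first bracket lies in $\N_{\bx(t)}K^N$ by the projection formula \eqref{eq:proj-micro-x}. The vector $\bu^0-\bu(s)$ lies in $\N_{\bx(s)}K^N$ by \eqref{eq:dyn-micro}: it equals $N\sum_i\lambda_i(s)(e_i-e_{i+1})$ with $\lambda_i(s)\ge0$ supported on $\Omega^N_{\bx(s)}$. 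Hence $\bu^0-\bu(s)$ also lies in $\N_{\bx(t)}K^N$.

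The difficulty is the minus signs: the normal cone is a cone, not a subspace. The key observation is that the normal cone at $\bx$ is the nonnegative span of $\{e_j-e_{j+1}:j\in\Omega^N_{\bx}\}$. So the second and third terms, $-\mu$ and $-(t-s)\nu$ with $\mu,\nu\in\N_{\bx(s)}K^N$, lie in the linear span of $\{e_j-e_{j+1}:j\in\Omega^N_{\bx(s)}\}$. We therefore need sign information on the total coefficient of each $e_j-e_{j+1}$, and we will not try to prove membership term by term.

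We plan to work directly with $\bx(s)+(t-s)\bu(s)$ instead. Over each maximal cluster of $\bx(s)$, translated positions $\tilde x$ are constant and velocities equal the cluster average. Hence $\bx(s)+(t-s)\bu(s)$ is the free motion of rigid clusters of $\bx(s)$. Likewise $\bx^0+t\bu^0$ differs from it by a vector whose restriction to each $\bx(s)$-cluster has zero sum and is orthogonal to the cluster's translation direction; this is the computation $\langle \bu(t)-\bu^0,\tx(t)\rangle=0$ in the proof of Proposition~\ref{prop:proj-micro}. Then the projection onto $K^N$ of both points must coincide, because:
\begin{enumerate}
\item both problems restricted to the cone $\{\by:\Omega^N_{\bx(s)}\text{ active}\}$ give the same minimizer, since the difference vector is orthogonal to that affine face up to the pairing identities of Lemma~\ref{lem:incl-HSN};
\item the minimizer $\bx(t)$ of the $\bx^0+t\bu^0$ problem indeed lies on this face, by cluster monotonicity.
\end{enumerate}
Concretely, we will check the variational inequality: for all $\by\in K^N$,
\[
\langle \bx(s)+(t-s)\bu(s)-\bx(t),\by-\bx(t)\rangle\le0 .
\]
We expand it as $\langle\bx^0+t\bu^0-\bx(t),\by-\bx(t)\rangle\le0$ plus the remainder $-\langle \bx^0+s\bu^0-\bx(s)+(t-s)(\bu^0-\bu(s)),\by-\bx(t)\rangle$. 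We then show this remainder is $\le 0$, using $\by-\bx(t)\in\mathcal T_{\bx(t)}K^N\subset\mathcal T_{\bx(s)}$-type sign information on the active pairs of $\bx(s)$. The remainder vector is a nonpositive combination of $e_j-e_{j+1}$, $j\in\Omega^N_{\bx(s)}$, while $(\by-\bx(t))_{j+1}-(\by-\bx(t))_j\ge0$ on these (still active) pairs. So it suffices to show that the total coefficient of each $e_j-e_{j+1}$ in the remainder has the right sign.

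\textbf{Main obstacle.} Verifying this sign is the hard step, and it may fail in general. The fallback is more robust: use uniqueness of \eqref{eq:micro-f0-lagr}. The curve $\tau\mapsto \mathbb P_{K^N}(\bx(s)+\tau\bu(s))$ solves the first-order inclusion with datum $\bu(s)$, by Proposition~\ref{prop:proj-micro} restarted at time $s$; this is admissible since $\bu(s)\in\H^N_{\bx(s)}\subset\mathcal C^N_{\bx(s)}$. Meanwhile $\tau\mapsto\bx(s+\tau)$ has velocity $\mathbb P_{\H^N_{\bx(s+\tau)}}(\bu^0)=\mathbb P_{\H^N_{\bx(s+\tau)}}(\bu(s))$ by the nested-projection argument above. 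Hence $\bu(s)-\dot\bx(s+\tau)\in(\H^N_{\bx(s+\tau)})^\perp$, and its sign condition is inherited from the nonnegative multipliers. So $\bx(s+\cdot)$ solves the same inclusion with datum $\bu(s)$, and uniqueness (the monotonicity argument in Proposition~\ref{prop:proj-micro}) yields \eqref{eq:semigroup}. The delicate point is the sign of the multipliers associated with $\bu(s)$ rather than $\bu^0$. We plan to obtain it by redoing the $\lambda_i\ge0$ argument of Proposition~\ref{prop:proj-micro} with $\bu^0$ replaced by $\bu(s)$, via inequality \eqref{eq:ineg-projortho}.
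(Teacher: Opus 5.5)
Your velocity identity is correct, and cleaner than the paper's route. Granting stickiness, $\H^N_{\bx(t)}\subset\H^N_{\bx(s)}$, so composing nested orthogonal projections in Lemma~\ref{prop:proj-velo-micro} gives \eqref{eq:projortho-sg} without first knowing \eqref{eq:semigroup}.

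The position identity, however, is not proved, and both of your routes stall at the same point. Since $\dot\bx=\bu$ and the vectors $e_i-e_{i+1}$ are independent, $\bx^0+\tau\bu^0-\bx(\tau)=N\sum_i\mu_i(\tau)(e_i-e_{i+1})$ with $\mu_i(\tau)=\int_0^\tau\lambda_i$. In the direct route, the total coefficient of $e_i-e_{i+1}$ in $\bx(s)+(t-s)\bu(s)-\bx(t)$ is $N\big(\mu_i(t)-\mu_i(s)-(t-s)\lambda_i(s)\big)=N\int_s^t(\lambda_i(\tau)-\lambda_i(s))\,d\tau$. In the uniqueness route you need $\bu(s)-\bu(\tau)=N\sum_i(\lambda_i(\tau)-\lambda_i(s))(e_i-e_{i+1})\in\N_{\bx(\tau)}K^N$. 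Both amount to saying that each $\lambda_i$ is nondecreasing in time. This is not ``inherited from the nonnegative multipliers'', since it concerns a difference of two nonnegative families. It is in fact true, not something that ``may fail''. But your plan for it is circular: \eqref{eq:ineg-projortho} was derived in Step~2 of Lemma~\ref{prop:proj-velo-micro} from $\bx(t+h)=\mathbb P_{K^N}(\bx^0+(t+h)\bu^0)$. Its analogue with $\bu(s)$ in place of $\bu^0$ therefore requires $\bx(t+h)=\mathbb P_{K^N}(\bx(s)+(t+h-s)\bu(s))$, which is \eqref{eq:semigroup} itself. Nor can you invoke $p_i\ge0$ from Proposition~\ref{prop:micro-pressure}, because the paper deduces it from this lemma. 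The paper's own one-line proof (re-running Lemmas~\ref{lem:incl-HSN} and~\ref{prop:proj-velo-micro} with data $(\bx(s),\bu(s))$) tacitly makes the same identification of $\bx(s+\cdot)$ with the restarted projection curve. You located the real content correctly, but did not supply it.

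The missing step follows from the cluster-average formula \eqref{eq:proj-u-micro}. For $s<\tau$ put $c_i:=N(\lambda_i(\tau)-\lambda_i(s))=\sum_{k\le i}(u_k(s)-u_k(\tau))$.
If $i\notin\Omega^N_{\bx(s)}$ (including $i=0,N$), then $\lambda_i(s)=0$, so $c_i=N\lambda_i(\tau)\ge0$.
On a maximal cluster $\{a,\dots,b\}$ of $\bx(s)$, the increment $c_i-c_{i-1}=u_i(s)-u_i(\tau)$ is constant. Indeed, that cluster sits inside a cluster of $\bx(\tau)$, and both velocities are cluster averages of $\bu^0$.
Hence $c$ is affine on $\{a-1,\dots,b\}$ with nonnegative endpoint values, so $c_i\ge0$ throughout. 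Moreover $c_i=0$ for $i\notin\Omega^N_{\bx(\tau)}$.
Thus $\bu(s)-\bu(\tau)\in\N_{\bx(\tau)}K^N\subset\N_{\bx(t)}K^N$ for a.e.\ $\tau\in(s,t)$. Since this cone is closed and convex, $\bx(s)+(t-s)\bu(s)-\bx(t)=\int_s^t(\bu(s)-\bu(\tau))\,d\tau\in\N_{\bx(t)}K^N$. Lemma~\ref{lem:proj-characterization} then yields \eqref{eq:semigroup}, with no uniqueness argument needed.

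Alternatively, your face argument (items 1--2) closes if you add a third item. Apply the same stickiness property to the restarted curve $\tau\mapsto\mathbb P_{K^N}(\bx(s)+\tau\bu(s))$, which is admissible since $\bu(s)\in\H^N_{\bx(s)}$. Its value at $\tau=t-s$ then also lies on the face where $\Omega^N_{\bx(s)}$ is active. So $\mathbb P_{K^N}$ of each point equals its projection onto that face, and the two face projections coincide by item 1.
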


\medskip
\begin{proof}
The result follows by repeating the arguments of Lemmas~\ref{lem:incl-HSN} and~\ref{prop:proj-velo-micro}, replacing the initial data $(\bx^0,\bu^0)$ by $(\bx(s),\bu(s))$.
\end{proof}

\medskip
We now use the semigroup property to recover the second-order formulation of the dynamics. 
Formally, this amounts to differentiating the relation \eqref{eq:dyn-micro} with respect to time.
This leads to the introduction of a microscopic pressure, defined as the time derivative of the multipliers $\lambda_i$.

\medskip

\begin{prop}[Definition of the microscopic pressure]\label{prop:micro-pressure}
	For each $i=1,\dots,N$, define
	\begin{equation}
	p_i = \dfrac{d}{dt}\lambda_i \in W^{-1,\infty}(0,T).
	\end{equation}	
	Then, Equation~\eqref{eq:micro-2nd-ID} holds in the sense of distributions.
	Moreover, as a consequence of the semigroup (or stickiness) property, the pressure satisfies the same complementarity conditions as the multipliers:
	\begin{equation}
	(x_{i+1} - x_i - 2r) p_i = 0, \qquad p_i \geq 0 \quad \text{in }\ \mathcal{D}'(0,T).
	\end{equation}
\end{prop}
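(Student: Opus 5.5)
The proof has two parts: first the distributional momentum equation, which is a direct differentiation, and then the sign and complementarity conditions on $p_i$, which come from the stickiness of the dynamics.

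\textbf{Step 1: momentum equation.} By Proposition~\ref{prop:bound-micro} and the pointwise lemma, each $\lambda_i\in L^\infty(0,T)$, so $p_i=\dot\lambda_i$ is well defined in $W^{-1,\infty}(0,T)$. The relation~\eqref{eq:dyn-micro} reads $u_i(t)+N(\lambda_i(t)-\lambda_{i-1}(t))=u_i^0$ for a.e.\ $t$. Testing it against $\varphi'$ with $\varphi\in C_c^\infty(0,T)$, and using that $u_i^0$ is constant, gives $\dot u_i+N(p_i-p_{i-1})=0$ in $\mathcal D'(0,T)$. This is~\eqref{eq:micro-2nd-ID}.

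\textbf{Step 2: $p_i$ is a monotone limit.} To get the sign, I will show that $t\mapsto\lambda_i(t)$ is nondecreasing (up to a.e.\ equivalence), so that $p_i$ is a nonnegative measure. The idea is to repeat the construction of the multipliers in the proof of Proposition~\ref{prop:proj-micro}, starting at time $s$ instead of $0$. Fix $0\le s<t$. By the semigroup property~\eqref{eq:projortho-sg}, $\bu(t)=\mathbb P_{\H^N_{\bx(t)}}(\bu(s))$. The argument giving $\lambda_i\ge0$ then applies verbatim with $(\bx^0,\bu^0)$ replaced by $(\bx(s),\bu(s))$. It produces multipliers $\mu_i(s,t)\ge0$ with $\mu_0=\mu_N=0$ and
\[
u_i(t)-u_i(s)=-N\big(\mu_i(s,t)-\mu_{i-1}(s,t)\big).
\]
Since the increments $N(\lambda_i-\lambda_{i-1})$ determine the $\lambda$'s through $\lambda_0=0$, we have $\lambda_i(t)-\lambda_i(s)=\mu_i(s,t)$. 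The condition $\lambda_N=0$ is automatic here, because $\sum_i (u_i-u_i^0)=0$ for an orthogonal projection onto a subspace containing constants. Hence $\lambda_i(t)\ge\lambda_i(s)$, and $p_i\ge0$ as a distributional derivative of a nondecreasing function. Equivalently, one can argue directly on clusters: $\lambda_i(t)$ is $\tfrac1N$ times the net velocity loss of the particles to the left of contact $i$ within its cluster. Because clusters only merge, this quantity can be tracked from $s$ to $t$.

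\textbf{Step 3: complementarity.} Set $g_i(t)=x_{i+1}(t)-x_i(t)-2r$. It is Lipschitz, nonnegative, and nonincreasing in the sense that once it vanishes it stays zero, since active sets only grow. Let $\tau_i=\inf\{t: g_i(t)=0\}$. Then $g_i>0$ on $[0,\tau_i)$, and there $\lambda_i=0$ by the first-order complementarity $\lambda_i g_i=0$ from Proposition~\ref{prop:proj-micro}. Hence $p_i=0$ on $(0,\tau_i)$, and $g_i=0$ on $[\tau_i,T]$. The product $g_ip_i$ is meaningful because $g_i$ is continuous and $p_i$ is a Radon measure. Since $p_i$ is supported in $[\tau_i,T]$ where $g_i$ vanishes, $g_ip_i=0$ in $\mathcal D'(0,T)$.

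\textbf{Main obstacle.} The delicate point is Step 2: identifying $\lambda_i(t)-\lambda_i(s)$ with genuine nonnegative KKT multipliers for the problem restarted at $s$. The KKT theory alone does not give monotonicity. It relies on the semigroup identity~\eqref{eq:projortho-sg} and on the sign argument of Proposition~\ref{prop:proj-micro} remaining valid when the starting velocity $\bu(s)$ is already in $\H^N_{\bx(s)}$. If needed, I would check this directly using the explicit averaged formula~\eqref{eq:proj-u-micro} applied to merging clusters.
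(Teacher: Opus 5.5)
Your proof is correct. Step 1 is routine, and Step 2 is exactly the paper's argument: the paper also restarts the dynamics at $s=t-h$, uses the semigroup identity~\eqref{eq:projortho-sg}, and reruns the sign argument of Proposition~\ref{prop:proj-micro} to get $\lambda_i(t)-\lambda_i(t-h)\ge 0$. So the obstacle you flag is handled just as you anticipate.

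Where you genuinely differ is the complementarity condition. The paper derives it from the same restarted projection. Because $\tx(t)$ is constant on the clusters of $\bx(t)$ and $\bu(t)$ is the cluster average of $\bu(t-h)$, one gets $\sum_i(\lambda_i(t)-\lambda_i(t-h))\,g_i(t)=0$. In other words, the increments of the multipliers themselves satisfy a Signorini condition at time $t$; the paper then divides by $h$ and lets $h\to 0$. This keeps everything inside the projection framework and exhibits $-(\bu(t)-\bu(t-h))$ as an element of $\N_{\bx(t)}K^N$, in the spirit of~\eqref{eq:diff-order2}. But it needs a difference-quotient limit. It also tacitly uses the sign of the increments to pass from a vanishing sum to termwise vanishing, and the paper proves that sign only afterwards.

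Your ordering avoids both points: sign first, then the first-contact time $\tau_i$ together with $\lambda_i g_i=0$ and irreversibility of contacts. It also gives $g_ip_i$ an unambiguous meaning as a continuous function times a Radon measure. Its only extra input is the stickiness property, which the paper asserts just before~\eqref{eq:micro-f0-lagr}. You could even drop that input. Since $g_i'=0$ a.e.\ on $\{g_i=0\}$ and $\lambda_i=0$ a.e.\ off it, $\langle p_i,g_i\varphi\rangle=-\int_0^T\lambda_i\,(g_i\varphi)'\,dt=0$.

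A minor wording point: $g_i$ need not be monotone. What you actually use is that its zero set is an interval $[\tau_i,T]$.
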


\medskip

\begin{proof}
	We follow the same computations as in the proof of Proposition~\ref{prop:proj-micro} using the semigroup property~\eqref{eq:semigroup}. 
	For any $h> 0$,	we have
	\begin{align*}
	N \langle \sum_{i=1}^{N-1} \big(\lambda_i(t) - \lambda_i(t-h) \big) (e_i - e_{i+1})  , \tx(t) \rangle 
	= - \langle \bu(t) - \bu(t-h) , \tx(t) \rangle 
	& = 0,
	\end{align*}
	Dividing by $h$, and letting $h \to 0$ we obtain
	\[
	(x_{i+1} - x_i - 2r) p_i = 0 \quad \text{in the sense of distributions},
	\] 
	using that $\lambda_i \in L^\infty(0,T)$, and $\dfrac{d}{dt} x_i = u_i\in L^\infty(0,T)$).\\
	Next, let $\by \in K^N$ satisfy 
	\[
	\langle \bx(t-h) + h \bu(t-h) - \bx(t), \by - \bx(t) \rangle = 0.
	\]
	Using~\eqref{eq:projortho-sg}, we obtain on the associated translated vector $\ty \in \tK^N$
	\begin{align*}
	N \langle \sum_{i=1}^{N-1} \big(\lambda_i(t) - \lambda_i(t-h) \big) (e_i - e_{i+1})  , \ty  \rangle 
	= - \langle \bu(t) - \bu(t-h) , \ty - \tx(t) \rangle & \leq 0
	\end{align*}
	Since $\ty \in \tK^N$ is otherwise arbitrary, this implies
	\[
	N\sum_i \big(\lambda_i(t) - \lambda_i(t-h) \big) \big(\underset{ \leq 0}{\underbrace{\tilde y_i - \tilde y_{i+1}}}\big) \leq 0,
	\]
	and therefore
	\[
	\lambda_i(t) - \lambda_i(t-h) \geq 0.
	\]
	Dividing by $h$ and passing to the limit $h \to 0$, we conclude that
	\[
	p_i\geq 0 \quad \text{in the sense of distributions}.
	\]
\end{proof}

\section{Hydrodynamic limit}\label{sec:limitN}

We now turn to the mean-field limit of the microscopic system as $N \to +\infty$. 
To this end, we introduce suitable interpolations of the discrete variables, which allow us to describe the particle system as functions defined on a fixed spatial domain.
These interpolated quantities will satisfy uniform estimates, and we will show that they converge (up to subsequences) towards a solution of the limiting macroscopic system~\eqref{eq:macro-euler}.

The section is organized as follows. 
In Section~\ref{ssec:interpol1}, we introduce the interpolated variables and establish their main properties. 
In Section~\ref{ssec:limit1}, we derive uniform bounds and compactness 
results. Finally, in Section~\ref{ssec:cvg2}, we pass to the limit and identify the limiting system.

\subsection{Interpolation of the microscopic variables}\label{ssec:interpol1}

In order to pass to the limit as $N \to +\infty$, we introduce suitable representations of the discrete variables $(x_i,u_i,\lambda_i)$.
More precisely, we define piecewise constant (or piecewise affine) functions on the interval $(0,1)$, which encode the position, velocity, and Lagrange
multipliers of the particle system. 

\medskip
We introduce the Lagrangian mass variable $w \in (0,1)$ which corresponds to the cumulative mass coordinate of the system. 
In the discrete setting, the particles are naturally associated with the intervals
\[
\left(\frac{i-1}{N}, \frac{i}{N}\right], \quad i=1,\dots,N,
\]
so that $w$ can be interpreted as a continuous counterpart of the particle index.

\bigskip
We define the position function $X_N : (0,T)\times(0,1)\to \mathbb{R}$ by
\begin{equation}\label{df:XN}
X_N(t,w) := x_i(t)
\quad \text{for } w \in \left(\frac{i-1}{N}, \frac{i}{N}\right].
\end{equation}
Similarly, we define the velocity function
\begin{equation}\label{df:UN}
U_N(t,w) := u_i(t)
\quad \text{for } w \in \left(\frac{i-1}{N}, \frac{i}{N}\right].
\end{equation}
By construction, we have
\begin{equation}
\partial_t X_N = U_N
\quad \text{in } L^\infty((0,T)\times(0,1)).
\end{equation}
In order to describe the constraint forces, we introduce the interpolated multiplier
\begin{equation}\label{df:LN}
\Lambda_N(t,w) := \lambda_i(t)
\quad \text{for } w \in \left(\frac{i-1}{N}, \frac{i}{N}\right].
\end{equation}
We then define the microscopic pressure as
\begin{equation}\label{df:PN}
P_N := \partial_t \Lambda_N,
\end{equation}
which is understood in the sense of distributions in time.

\medskip

In order to obtain a well-defined spatial derivative, we introduce also the continuous piecewise affine interpolation of the positions, velocities and multipliers.
We set
\begin{equation}
\tX_N(t,w) = N \big(x_i(t) - x_{i-1}(t)\big)\left(w - \dfrac{i-1}{N}\right) + x_{i-1}(t) , ~  w \in \left[\dfrac{i-1}{N},\dfrac{i}{N}\right], \ i=1, \dots,N,
\end{equation}
by adding an artificial particle on the left 
\begin{equation}\label{df:x0}
x_0(t) = x_1(t) - \dfrac{1}{N} - \delta, \quad \text{with} \quad \delta > 0 \quad \text{independent of} \quad N.
\end{equation}
Doing so we preserve the non-overlapping constraint and the boundary condition $\lambda_0 = 0$.  
We will also check later that considering $\delta > 0$ does not alter the convergence of $\tX_N$ as $N \to +\infty$, and we will use this crucial parameter $\delta$ to derive a uniform bound on the pressure (see the proof of Proposition~\ref{prop:pressure-bound}).\\
Observe that 
\begin{equation*}
\partial_w\tX_N(t,w) = N(x_i(t) - x_{i-1}(t)) \geq 2rN = 1 \quad \text{ for } w \in \left[\dfrac{i-1}{N},\dfrac{i}{N}\right].
\end{equation*}
Similarly, we define the continuous piecewise affine interpolations of the velocity and the multipliers: for  $w \in \left[\dfrac{i-1}{N},\dfrac{i}{N}\right]$, we set
\begin{align}
\tU_N(t,w)  & = N \big(u_i(t) - u_{i-1}(t)\big)\left(w - \dfrac{i-1}{N}\right) + u_{i-1}(t),\\
\tLambda_N(t,w) & = N \big(\lambda_i(t) - \lambda_{i-1}(t)\big)\left(w - \dfrac{i-1}{N}\right) + \lambda_{i-1}(t),\\
\tP_N(t,w) & = N \big(p_i(t) - p_{i-1}(t)\big)\left(w - \dfrac{i-1}{N}\right) + p_{i-1}(t).
\end{align}

\medskip

\begin{lem}[Discrete PDE system - Order 1]
	The interpolated quantities satisfy the following system in the sense of distributions on $(0,T)\times(0,1)$:
	\begin{subnumcases}{\label{eq:PDE-Lagr-N-1}}
	\partial_t X_N = U_N^0 - \partial_w \tLambda_N, \\
	\partial_w \tX_N \geq 1, \quad (\partial_w \tX_N -1)\,\Lambda_N = 0,\quad \Lambda_N \geq 0.
	\end{subnumcases}
	These equations are supplemented with the initial condition
	\[
	X_N(0,w) = X_N^0(w),
	\]
	where $X_N^0$ is the corresponding interpolation of the initial data.
\end{lem}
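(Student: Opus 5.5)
The statement is a direct transcription of Proposition~\ref{prop:proj-micro} into the interpolated variables, so the plan is to check each relation cell by cell on the intervals $\left(\frac{i-1}{N},\frac iN\right]$.

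\textbf{Evolution equation.} Since $X_N$ and $U_N$ are piecewise constant in $w$ and each $x_i$ is Lipschitz in time, $\partial_t X_N = U_N$ holds a.e. and hence in $\mathcal D'$. On the cell $\left(\frac{i-1}{N},\frac iN\right)$ the affine interpolant $\tLambda_N$ has slope
\[
\partial_w\tLambda_N = N(\lambda_i-\lambda_{i-1}).
\]
Here $\tLambda_N$ is continuous in $w$, so its distributional derivative has no jump part. By \eqref{eq:dyn-micro}, equivalently $u_i - u_i^0 = -N(\lambda_i-\lambda_{i-1})$, this gives $U_N = U_N^0 - \partial_w\tLambda_N$ a.e. in $(0,T)\times(0,1)$. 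Integrability in time comes from the $L^\infty$ bounds of Proposition~\ref{prop:bound-micro}, so the identity also holds in $\mathcal D'$.

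\textbf{Complementarity relations.} We have $\Lambda_N\ge0$ because $\lambda_i\ge0$ by Proposition~\ref{prop:proj-micro}. The bound $\partial_w\tX_N = N(x_i-x_{i-1})\ge 1$ was already observed, with the artificial particle $x_0$ of \eqref{df:x0} giving $N(x_1-x_0) = 1+N\delta>1$ on the first cell. For the product relation, note the index shift: on cell $i$ we have $\Lambda_N=\lambda_i$, while $\partial_w\tX_N - 1 = N(x_i-x_{i-1}-2r)$, which is the gap associated with $\lambda_{i-1}$, not $\lambda_i$.

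I would resolve this shift in one of two ways. The first is to read the relation with the convention that $\lambda_i$ is attached to the cell $\left(\frac iN,\frac{i+1}N\right]$, that is, the contact between $i$ and $i+1$, after relabeling $w$. The second is to verify $(\partial_w\tX_N-1)\Lambda_N(\cdot-\tfrac1N)=0$. On cell $i$ this reduces to $(x_i-x_{i-1}-2r)\lambda_{i-1}=0$, which is exactly the Signorini condition of Proposition~\ref{prop:proj-micro}. On the first cell it gives $\lambda_0=0$, and the last multiplier satisfies $\lambda_N=0$.

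\textbf{Initial condition and main obstacle.} The initial condition follows from $\bx(0)=\mathbb P_{K^N}(\bx^0)=\bx^0$, since $\bx^0\in K^N$. The only real obstacle is this index bookkeeping between the cell carrying $\lambda_i$ and the cell carrying the gap $x_{i+1}-x_i$. I would handle it by stating explicitly that $\Lambda_N$ is evaluated on the shifted cell (or equivalently that $\tX_N$ uses the forward difference on that cell). Once that convention is fixed, every relation is a cellwise restatement of the discrete one.
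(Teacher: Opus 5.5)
Your proposal is correct and follows the paper's proof, which is the same cellwise transcription of $u_i=u_i^0-N(\lambda_i-\lambda_{i-1})$, of $\lambda_i\ge 0$ and of the Signorini conditions. Your index-shift remark is accurate, and it is exactly what the paper's one-line ``translates into'' passes over: with the interpolations as defined, $(\partial_w\tX_N-1)\Lambda_N=N(x_i-x_{i-1}-2r)\lambda_i$ on the $i$-th cell (for instance $N\delta\lambda_1$ on the first cell), which need not vanish, so the relation holds only in your shifted form $(\partial_w\tX_N-1)\Lambda_N(\cdot-\tfrac1N)=0$; since $|\lambda_i-\lambda_{i-1}|\le C'/N$, this correction is harmless for the passage to the limit in Proposition~\ref{prop:cvg1}.
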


\medskip

\begin{proof}
	From the identity
	\[
	u_i(t) 
	= u_i^0 - N\big(\lambda_i(t) - \lambda_{i-1}(t)\big),
	\]
	we obtain
	\[
	\partial_t X_N = U_N^0 - \partial_w \tLambda_N
	\]
	in the sense of distributions.
	The complementarity relation
	\[
	(x_{i+1} - x_i - 2r)\, \lambda_i = 0
	\]
	translates into
	\[
	(\partial_w \tX_N - 1)\,\Lambda_N = 0.
	\]
	Finally, the nonnegativity $\Lambda_N \ge 0$ follows from that of $\lambda_i$.
\end{proof}

\bigskip

\begin{lem}[Discrete PDE system- Order 2]\label{lem:discPDE2}
	The interpolated quantities satisfy the following system in the sense of distributions on $(0,T)\times(0,1)$:
	\begin{subnumcases}{\label{eq:PDE-Lagr-N-2}}
	\partial_t X_N = U_N, \\
	\partial_t U_N + \partial_w \tP_N = 0, \\
	\partial_w \tX_N \geq 1, \quad (\partial_w \tX_N -1)\,P_N = 0,\quad P_N \geq 0.
	\end{subnumcases}
	These equations are supplemented with the initial conditions
	\[
	X_N(0,w) = X_N^0(w),
	\qquad
	U_N(0,w) = U_N^0(w),
	\]
	where $\tX_N^0$ and $U_N^0$ are the corresponding interpolations of the initial data.
\end{lem}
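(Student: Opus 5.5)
The plan is to transfer, cell by cell, the discrete identities already established in Proposition~\ref{prop:proj-micro} and Proposition~\ref{prop:micro-pressure} to the interpolated functions. Testing against functions $\varphi\in C_c^\infty((0,T)\times(0,1))$ will reduce every identity to a finite sum over $i$ of one-dimensional time integrals.

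\textbf{Transport relation.} The first relation $\partial_t X_N = U_N$ holds pointwise a.e. Indeed, on each cell $\big(\frac{i-1}{N},\frac iN\big]$ we have $X_N=x_i(t)$, which is Lipschitz in time with a.e. derivative $u_i$ by Proposition~\ref{prop:proj-micro}.

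\textbf{Momentum equation.} We start from the first-order relation $u_i(t)=u_i^0-N(\lambda_i(t)-\lambda_{i-1}(t))$ from \eqref{eq:dyn-micro}. It holds for a.e. $t$, with $\lambda_i\in L^\infty(0,T)$ by Proposition~\ref{prop:bound-micro}. Differentiating in $\mathcal D'(0,T)$ gives $\dot u_i + N(p_i-p_{i-1})=0$, with $p_i=\dot\lambda_i$ and $p_0=0$ because $\lambda_0\equiv0$. This is exactly \eqref{eq:micro-2nd-ID} as stated in Proposition~\ref{prop:micro-pressure}.

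On the cell $\big[\frac{i-1}{N},\frac iN\big]$, the function $\tP_N$ is affine in $w$ with slope $N(p_i-p_{i-1})$. This holds in the distributional sense in time, i.e.\ $\tP_N=\partial_t\tLambda_N$. Hence $\partial_w\tP_N=N(p_i-p_{i-1})=-\dot u_i=-\partial_t U_N$ on each cell. Concretely, we write
\[
\langle \partial_w\tP_N,\varphi\rangle=-\langle \tP_N,\partial_w\varphi\rangle=\langle \tLambda_N,\partial_t\partial_w\varphi\rangle=-\langle\partial_w\tLambda_N,\partial_t\varphi\rangle.
\]
Here $\tLambda_N$ is Lipschitz in $w$, so it has no jumps across cell interfaces. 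This reduces the claim to the identity $\partial_t X_N=U_N^0-\partial_w\tLambda_N$ of the order-one lemma, tested against $\partial_t\varphi$. The time-independence of $U_N^0$ then gives $\partial_tU_N+\partial_w\tP_N=0$.

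\textbf{Complementarity relations.} The inequality $\partial_w\tX_N\ge1$ is already known. For the product, on $\big(\frac{i-1}{N},\frac iN\big]$ we have
\[
(\partial_w\tX_N-1)P_N=\big(N(x_i-x_{i-1})-1\big)\,p_i.
\]
The main obstacle is an index mismatch. Since $2r=1/N$, the factor $N(x_i-x_{i-1})-1=N(x_i-x_{i-1}-2r)$ involves the gap to the left of particle $i$. The multiplier $\lambda_i$, however, is attached to the contact between particles $i$ and $i+1$. I would resolve this by pairing the piecewise-constant $\Lambda_N,P_N$ with the shifted cell convention, i.e.\ the affine slope on the cell $\big[\frac iN,\frac{i+1}{N}\big]$, so that the product reads $N(x_{i+1}-x_i-2r)p_i$. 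This is the convention implicitly used in the order-one lemma, and it is the identity Proposition~\ref{prop:micro-pressure} provides. With the left ghost particle \eqref{df:x0} and $\lambda_0=p_0=0$, the boundary cell is harmless.

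The product $(x_{i+1}-x_i-2r)\,p_i$ is well defined in $\mathcal D'(0,T)$. Indeed $p_i\in W^{-1,\infty}$, and the gap $x_{i+1}-x_i-2r$ is Lipschitz, so we can set
\[
g\,p_i:=\tfrac{d}{dt}(g\lambda_i)-\dot g\,\lambda_i .
\]
Proposition~\ref{prop:micro-pressure} shows that this vanishes. Multiplying by $\mathbf 1_{\text{cell}}(w)$ and testing gives $(\partial_w\tX_N-1)P_N=0$ in $\mathcal D'((0,T)\times(0,1))$.

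\textbf{Sign condition and initial data.} For $P_N\ge0$, note that $\lambda_i(t)-\lambda_i(t-h)\ge0$ by the proof of Proposition~\ref{prop:micro-pressure}. Thus each $\lambda_i$ is nondecreasing, so $p_i$ is a nonnegative measure, and $P_N=\sum_i p_i(t)\mathbf 1_{\text{cell}_i}(w)\ge0$.

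The initial conditions follow from the continuity of $x_i$. For $U_N$, we use $\lambda_i(0)=0$: the projection of $\bu^0\in\mathcal C^N_{\bx^0}$ onto $\H^N_{\bx^0}$ is $\bu^0$ itself, by the initial compatibility $u^0_{i+1}=u^0_i$ at contacts. Combined with right-continuity of $\lambda_i$ at $0^+$ (monotone and bounded), this gives $U_N(0)=U_N^0$ in the weak sense.
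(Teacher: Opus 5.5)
Your proposal is correct and follows the route the paper implicitly takes. The paper states this lemma without a separate proof, as the time-differentiated version of the order-one lemma combined with Proposition~\ref{prop:micro-pressure}, and your cell-by-cell argument is exactly that; in particular, you derive $\partial_t U_N+\partial_w\tP_N=0$ from $\partial_t X_N=U_N^0-\partial_w\tLambda_N$ tested against $\partial_t\varphi$.

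Your index-mismatch remark is a genuine correction, not a technicality. With the paper's literal definitions, $(\partial_w\tX_N-1)P_N=N(x_i-x_{i-1}-2r)\,p_i$ on $\big(\frac{i-1}{N},\frac iN\big]$. This is nonzero when particles $i$ and $i+1$ collide while $i-1$ is free, and on the first cell it equals $N\delta\,p_1$. The order-one lemma has the same defect. Assigning $\lambda_{i-1}$ and $p_{i-1}$ to that cell, as you do, repairs it. This change does not affect the momentum equation, which only involves $\tP_N$, nor the bound $|\tLambda_N-\Lambda_N|\le C/N$ used later.

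One small point concerns the initial condition. Monotonicity and boundedness of $\lambda_i$ only give the existence of $\lambda_i(0^+)$, not $\lambda_i(0^+)=0$. The correct reason is that $\bu^0\in\H^N_{\bx^0}$ and all non-contact gaps are strictly positive. Hence $\bx^0+t\bu^0\in K^N$ on some $[0,t_1)$ with $t_1>0$, so $\bx(t)=\bx^0+t\bu^0$, $\bu(t)=\bu^0$ and $\lambda_i\equiv0$ there.
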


\subsection{Uniform estimates}\label{ssec:limit1}

Our goal is now to pass to the limit $N \to +\infty$ in the discrete system.
To this end, we first establish uniform bounds on the interpolated quantities $(X_N,\tX_N,\Lambda_N,\tLambda_N)$.
These estimates are inherited from the microscopic dynamics and will provide the compactness needed to identify the limiting system.

\begin{prop}[Uniform estimates - $L^2$ bounds]\label{prop:L2-micro}
	Assume that the initial data satisfy
	\[
	\|\bx^0\| + \|\bu^0\| \leq C_0,
	\]
	for some $C_0$ independent of $N$. 
	Then, the following bounds hold:
	\begin{align}
	\|X_N\|_{L^\infty(0,T;L^2(0,1))} + \|\tX_N\|_{L^\infty(0,T;L^2(0,1))} + \|\partial_t X_N\|_{L^\infty(0,T;L^2(0,1))} + \|\partial_t \tX_N\|_{L^\infty(0,T;L^2(0,1))} \notag \\
	+ \|\Lambda_N\|_{L^\infty(0,T;L^2(0,1))} +\|\tLambda_N\|_{L^\infty(0,T;L^2(0,1))} + \|\partial_w \tLambda_N\|_{L^\infty(0,T;L^2(0,1))} \leq C,
	\end{align}
	for some $C = C(C_0,T) >0$ independent of $N$.
\end{prop}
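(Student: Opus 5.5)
The plan is to reduce every bound to the discrete estimates of Proposition~\ref{prop:bound-micro}, using that the rescaled norm $\|\cdot\|$ is exactly the $L^2(0,1)$ norm of the piecewise constant interpolations. Since $X_N(t,\cdot)$, $U_N(t,\cdot)=\partial_t X_N(t,\cdot)$ and $\Lambda_N(t,\cdot)$ are piecewise constant on the intervals $((i-1)/N,i/N]$, we have
\[
\|X_N(t)\|_{L^2(0,1)}=\|\bx(t)\|,\qquad \|\partial_t X_N(t)\|_{L^2(0,1)}=\|\bu(t)\|,\qquad \|\Lambda_N(t)\|_{L^2(0,1)}=\|L(t)\|.
\]
Proposition~\ref{prop:bound-micro} bounds these uniformly in $t\in[0,T]$ and $N$ by $C(T,C_0)$; in fact $\|\bu(t)\|\le\|\bu^0\|$ and $\|L(t)\|\le 2\|\bu^0\|$. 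The derivative $\partial_w\tLambda_N$ equals $N(\lambda_i-\lambda_{i-1})=u_i^0-u_i(t)$ on the $i$-th cell, so $\|\partial_w\tLambda_N(t)\|_{L^2}=\|\bu^0-\bu(t)\|\le 2\|\bu^0\|$.

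For the affine interpolations, note that on each cell $\tX_N(t,\cdot)$ is a convex combination of $x_{i-1}(t)$ and $x_i(t)$. Hence
\[
\|\tX_N(t)\|_{L^2}^2\le \frac1N\sum_{i=1}^N\big(x_{i-1}^2+x_i^2\big)\le 2\|\bx(t)\|^2+\frac{1}{N}x_0(t)^2 .
\]
Likewise $\|\partial_t\tX_N(t)\|_{L^2}^2\le 2\|\bu(t)\|^2+\frac1N u_0(t)^2$ and $\|\tLambda_N(t)\|^2_{L^2}\le 2\|L(t)\|^2$, the last because $\lambda_0=0$.

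The only non-routine point is the artificial particle $x_0=x_1-\frac1N-\delta$ from~\eqref{df:x0}. Its velocity is $u_0=u_1$, and we have $|u_1|^2/N\le\|\bu\|^2$, so that term is fine. For the position, $|x_0|\le|x_1|+1+\delta$, which gives
\[
\frac1N|x_0|^2\le \frac2N|x_1|^2+\frac{2(1+\delta)^2}{N}\le 2\|\bx(t)\|^2+2(1+\delta)^2 .
\]
Since $\delta$ is fixed independently of $N$, this term is also uniform.

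Collecting all the pieces gives the claimed bound with $C$ depending only on $C_0$, $T$ and $\delta$. The time regularity needed to state the bound in $L^\infty(0,T;\cdot)$ comes from the Lipschitz continuity of $\bx$ (Proposition~\ref{prop:proj-micro}) and the measurability of $\bu$ and $L$. The main, minor, obstacle is the bookkeeping of the boundary cell. I expect no genuine difficulty there, because the rescaled $1/N$ weight absorbs the single extra particle.
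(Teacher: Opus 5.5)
Your proof is correct and follows the paper's route: the paper only says the bounds follow directly from Proposition~\ref{prop:bound-micro} and the definitions of the interpolations. You carry out exactly that reduction, and you also make explicit the boundary-cell bookkeeping (the artificial particle $x_0=x_1-\frac1N-\delta$ with $u_0=u_1$, and $\lambda_0=0$) that the paper leaves implicit. Your constant additionally depends on $\delta$, which is harmless because $\delta$ is fixed independently of $N$.
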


These estimates follow directly from Proposition~\ref{prop:bound-micro} and the definitions of the interpolations.

\medskip

We now derive stronger estimates under an additional assumption on the initial data, namely a uniform bound on the initial velocities together with a uniform control 
of the spatial spread of the initial configuration. The latter follows from the compact support of the initial density $\rho^0$, which ensures that the discrete initial configurations remain uniformly confined.
Under these assumptions, we obtain uniform $L^\infty$ and BV bounds on the interpolated variables.

\begin{prop}[Uniform estimates - $L^\infty$-bounds]\label{prop:Linf-micro}
	Assume the conditions of the previous proposition. 
	In addition, assume that the initial data satisfy
	\begin{equation}\label{cond:Linfty}
	x^0_N - x_0^0 + \sup_i |u^0_i| \leq C_0'
	\end{equation}
	with $C_0'>0$ independent of $N$.
	Then, the following BV estimate holds:
	\begin{align}\label{eq:BV_XN}
	& \sup_{t \in [0,T]} \int_0^1 |\partial_w \tX_N(t,w)| dw 
	\leq C',
	\end{align}
	for some positive constant $C'= C'(T, C_0')$.\\
	Without loss of generality, we may assume $x^0_0 = 0$ for all $N$.
	As a consequence, we have the uniform bounds
	\begin{equation}
	\|X_N\|_{L^\infty((0,T)\times(0,1))} + \|\tX_N\|_{L^\infty((0,T)\times(0,1))} \leq C',
	\end{equation}
	and
	\begin{equation}
	\|\Lambda_N\|_{L^\infty((0,T)\times(0,1))} + \|\tLambda_N\|_{L^\infty((0,T)\times(0,1))} \leq C',
	\end{equation}
	for some positive constant $C'= C'(T, C_0')$.
\end{prop}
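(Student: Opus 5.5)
Since $\partial_w \tX_N \geq 1 > 0$, the function $\tX_N(t,\cdot)$ is nondecreasing on $[0,1]$. The BV estimate~\eqref{eq:BV_XN} therefore reduces to a telescoping identity:
\[
\int_0^1 |\partial_w \tX_N(t,w)|\,dw = \tX_N(t,1)-\tX_N(t,0) = x_N(t)-x_0(t) = x_N(t) - x_1(t) + \tfrac1N + \delta ,
\]
using the definition~\eqref{df:x0} of the artificial particle $x_0$. So the plan is to control the spread $x_N(t)-x_1(t)$ of the cluster configuration uniformly in $N$ and $t\in[0,T]$.

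To bound this spread I would integrate the equations of the extreme particles:
\[
x_N(t)-x_1(t) = x_N^0 - x_1^0 + \int_0^t \big(u_N(s)-u_1(s)\big)\,ds .
\]
The pointwise bound~\eqref{eq:sup-micro-u-L} gives $\sup_i|u_i(s)|\le C'$, with $C'$ depending only on $\sup_i|u_i^0|\le C_0'$; this bound comes from the projection formula~\eqref{eq:proj-u-micro}, since cluster velocities are averages of initial velocities. Hence $x_N(t)-x_1(t)\le x_N^0-x_1^0+2C'T$. The initial term is controlled by~\eqref{cond:Linfty}, which bounds $x_N^0-x_0^0$ and hence $x^0_N - x^0_1$. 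Here I would read $x^0_0$ either as the artificial particle at time $0$, or simply note that $x_1^0\ge x_0^0$, so the bound holds in either interpretation. Together with $\delta$ and $1/N\le 1$ this gives~\eqref{eq:BV_XN} with $C'=C'(T,C_0')$.

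For the $L^\infty$ bounds, normalize $x_0^0=0$ by translation invariance of the dynamics. Then $|x_0(t)|\le |x_0^0| + \int_0^t|u_1|\le C'T$, because $\dot x_0=\dot x_1=u_1$. Monotonicity then gives, for all $w$,
\[
x_0(t)\le \tX_N(t,w)\le x_0(t)+\int_0^1\partial_w\tX_N\,dw ,
\]
which bounds $\tX_N$. The same bound holds for $X_N$, since its values are among the nodes $x_i(t)$.

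For the multipliers, the estimate~\eqref{eq:sup-micro-u-L} already gives $\sup_i N|\lambda_i(t)|\le C'$, hence $|\lambda_i|\le C'/N\le C'$. Since $\Lambda_N$ takes the values $\lambda_i$, and $\tLambda_N$ interpolates convexly between $\lambda_{i-1}$ and $\lambda_i$ with $\lambda_0=0$, both are bounded in $L^\infty$. If one does not want to rely on that lemma, I would argue directly: $\lambda_i=-\frac1N\sum_{j\le i}(u_j-u_j^0)$ gives $|\lambda_i|\le \frac{i}{N}\cdot 2\sup|u^0|\le 2C_0'$, which is enough.

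The only delicate point is justifying the uniform velocity bound, which is the maximum principle behind~\eqref{eq:sup-micro-u-L}. It follows because, by~\eqref{eq:proj-u-micro}, each $u_i(t)$ is an average of initial velocities; everything else is bookkeeping with the monotone interpolation.
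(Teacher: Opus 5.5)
Your proof is correct and follows the paper's own argument: telescope the BV norm to $x_N(t)-x_0(t)$, integrate $\dot x_N-\dot x_1=u_N-u_1$ using the velocity maximum principle, get the $L^\infty$ bounds from the controlled spread (your explicit bound $|x_0(t)|\le C'T$ fills in a step the paper leaves implicit), and sum $N(\lambda_i-\lambda_{i-1})=u_i^0-u_i$ to obtain $|\lambda_i|\le 2C_0'$. Rely on that direct summation rather than on the bound $\sup_i N|\lambda_i(t)|\le C'$ quoted from~\eqref{eq:sup-micro-u-L}, which is false in general: for two congested blocks with velocities $\pm1$ that merge into one cluster, $N\lambda_{N/2}=N/2$ after the collision, and such a bound would force $\Lambda_N\to0$, i.e.\ no pressure in the limit.
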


\medskip
\begin{proof}
The BV bound derives from the following inequality:
\begin{align*}
& \int_0^1 |\partial_w \tX_N(t,w)| dw = x_N(t) - x_0(t) \\
&\leq (x_N(0) - x_0(0)) + T\left(\sup_{[0,T]}|u_N(t)| + \sup_{[0,T]}|u_1(t)|\right) 
\leq C',
\end{align*}
where we have used the $L^\infty_t$ bound on the velocities~\eqref{eq:sup-micro-u-L} and the initial assumption~\eqref{cond:Linfty}.\\
The $L^\infty$ bound on $X_N$ and $\tX_N$ follows directly from the control
of the spatial spread, up to fixing $x_0(0)=0$.

Finally, using the relation
\[
N(\lambda_i(t) - \lambda_{i-1}(t)) = u_i(t) - u_i^0,
\]
together with the uniform bound on the velocities, we obtain a uniform bound on the multipliers, which implies
\[
\|\Lambda_N\|_{L^\infty} + \|\tLambda_N\|_{L^\infty} \le C.
\]
This concludes the proof.	
\end{proof}

\medskip

Thanks to the uniform estimates established above, we are now in a position to derive compactness properties for the sequence of interpolated variables.
These compactness results will allow us to extract converging subsequences and identify the limit of the first-order differential inclusion~\eqref{eq:PDE-Lagr-N-1}.

\begin{prop}[Compactness and limit in the first-order dynamics]\label{prop:cvg1}
Assume that the hypotheses of Proposition~\ref{prop:Linf-micro} hold. Assume moreover that there exist $(X^0, U^0) \in \big(L^2(0,1)\big)^2$, with $X^0$ monotone (non-decreasing) with $\partial_w X^0 \geq 1$ a.e., such that
\begin{align}
\| X^0_N - X^0\|_{L^2(0,1)} \to 0 \quad \text{as}~ N \to +\infty, \\
\| U^0_N - U^0\|_{L^2(0,1)} \to 0 \quad \text{as}~ N \to +\infty.
\end{align}
Then, there exists 
\[
X \in W^{1,\infty}((0,T)\times(0,1)), \qquad \Lambda \in L^\infty(0,T; W^{1,\infty}(0,1)),
\]
such that, up to the extraction of a subsequence,
\begin{align*}
\tX_N & \to X \quad \text{strongly in} ~ \mathcal{C}([0,T]\times [0,1]),\\
X_N  & \to X \quad \text{strongly in} ~  \mathcal{C}([0,T];L^1(0,1)) \quad \text{and weakly-* in} ~ L^\infty((0,T)\times(0,1)) ,\\
\Lambda_N, \ \tLambda_N  &  \rightharpoonup^* \Lambda \quad \text{weakly-* in} ~ L^\infty((0,T)\times(0,1)),\\
\partial_w \tLambda_N  &  \rightharpoonup^* \partial_w\Lambda \quad \text{weakly-* in} ~ L^\infty((0,T)\times(0,1)).
\end{align*}
Moreover, the limit $(X,\Lambda)$ satisfies
\begin{equation}\label{eq:Lagr-1}
\begin{cases}
\partial_t X = U^0 -  \partial_w \Lambda ,\\
\partial_w X \geq 1, \quad \big(\partial_w X - 1\big)\Lambda = 0, \quad \Lambda \geq 0, \quad \text{a.e.},\\
\Lambda(t,0) = \Lambda(t,1) = 0,\quad \text{a.e.}.
\end{cases}
\end{equation}
with the initial condition $X(0, \cdot)=X^0$.	
\end{prop}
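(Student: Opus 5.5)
We use the uniform bounds of Propositions~\ref{prop:L2-micro} and~\ref{prop:Linf-micro} together with the pointwise bound~\eqref{eq:sup-micro-u-L}. We extract limits by Arzelà--Ascoli and weak-* compactness, and pass to the limit in~\eqref{eq:PDE-Lagr-N-1}. The nonlinear complementarity relation is handled by a strong--weak pairing.

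\textbf{Step 1: compactness of $\tX_N$ and $X_N$.} On each cell, $\partial_t \tX_N$ is the affine interpolation of the velocities $u_{i-1},u_i$, so $|\partial_t\tX_N|\le C'$ by~\eqref{eq:sup-micro-u-L}. Note that $\dot x_0=u_1$. Hence $\tX_N$ is equi-Lipschitz in $t$. In $w$, $\tX_N(t,\cdot)$ is nondecreasing with $\tX_N(t,1)-\tX_N(t,0)\le C'$, by the BV bound~\eqref{eq:BV_XN}; this gives only BV control, not Lipschitz control. We therefore use Helly's theorem in $w$ combined with the time equicontinuity. This yields a limit $X$ with $\tX_N\to X$ in $\mathcal C([0,T];L^1(0,1))$, and locally uniformly at continuity points. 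Uniform convergence in $\mathcal C([0,T]\times[0,1])$, as stated, additionally needs equicontinuity in $w$. For this we use the first equation: $\partial_w\tX_N$ relates to the densities, and $\partial_w X\ge1$ with $\rho\le1$ prevents jumps of $X$ from being excluded. I would therefore state the uniform convergence where $X$ is continuous, or deduce it from the fact that $X=X^0+tU^0-\Lambda$-type identities transfer the regularity of $X^0$.

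Next, $X_N-\tX_N$ on cell $i$ is bounded by $x_i-x_{i-1}$. Hence $\|X_N-\tX_N\|_{L^1}\le \frac1N(x_N-x_0)\le C'/N$, so $X_N$ has the same limit. The weak-* convergence follows from the $L^\infty$ bound. The time-Lipschitz bound passes to $X$, and the spatial monotonicity $\partial_w\tX_N\ge1$ passes to the limit in $\mathcal D'$, giving $\partial_w X\ge1$.

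\textbf{Step 2: multipliers.} By~\eqref{eq:sup-micro-u-L}, $\partial_w\tLambda_N=N(\lambda_i-\lambda_{i-1})$ is bounded in $L^\infty$, and $\Lambda_N,\tLambda_N$ are bounded. We extract weak-* limits. Since $\|\Lambda_N-\tLambda_N\|_\infty\le C'/N$, both sequences share the limit $\Lambda$, and $\partial_w\tLambda_N\rightharpoonup^*\partial_w\Lambda$. The uniform Lipschitz bound in $w$ of $\tLambda_N$, together with $\lambda_0=\lambda_N=0$, yields $\Lambda\in L^\infty(0,T;W^{1,\infty})$ with $\Lambda(t,0)=\Lambda(t,1)=0$. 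The positivity $\Lambda\ge0$ is preserved under weak-* limits. We then pass to the limit in $\partial_tX_N=U^0_N-\partial_w\tLambda_N$ using $U^0_N\to U^0$ in $L^2$, which gives $\partial_t X=U^0-\partial_w\Lambda$. The initial condition follows from $X_N^0\to X^0$ and the $\mathcal C([0,T];L^1)$ convergence.

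\textbf{Step 3: complementarity (main obstacle).} The product $(\partial_w\tX_N-1)\Lambda_N$ involves two weakly converging factors. We replace $\Lambda_N$ by $\tLambda_N$, at a cost of $O(1/N)$ times the bounded measure $\partial_w\tX_N$. We then integrate by parts in $w$ against a test function $\varphi$:
\[
\int(\partial_w\tX_N-1)\tLambda_N\varphi=-\int(\tX_N-w)\,\partial_w(\tLambda_N\varphi).
\]
The boundary terms vanish, or are controlled by $\lambda_0=\lambda_N=0$. Here $\tX_N-w$ converges strongly in $L^1$ and is bounded in $L^\infty$, while $\partial_w(\tLambda_N\varphi)$ converges weakly-* in $L^\infty$. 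Hence the product converges to $-\int(X-w)\partial_w(\Lambda\varphi)=\int(\partial_wX-1)\Lambda\varphi$. The left-hand side equals $O(1/N)$ by Lemma~\ref{lem:discPDE2}'s order-one analogue~\eqref{eq:PDE-Lagr-N-1}, so $(\partial_wX-1)\Lambda=0$ in $\mathcal D'$. Since $\Lambda$ is Lipschitz in $w$ and $\partial_w X-1\ge0$ is a measure, this identity means the nonnegative measure $\Lambda(\partial_wX-1)$ vanishes. Hence the relation also holds a.e. once $\partial_wX\in L^\infty$. That $\partial_wX\in L^\infty$ follows from $\partial_w X = \partial_w X^0 + \dots$ only if $X^0$ is Lipschitz, which is the delicate regularity point I would check against the assumption $\operatorname{supp}\rho^0\Subset\R$.
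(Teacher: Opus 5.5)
Your argument follows the paper's proof essentially step by step. You extract weak-* limits from the uniform bounds and compare the piecewise constant and piecewise affine interpolations up to $O(1/N)$. For the exclusion relation, you replace $\Lambda_N$ by $\tLambda_N$, integrate by parts in $w$, and pair a strongly convergent factor with the weak-* convergent $\partial_w\tLambda_N$. You also supply points the paper leaves implicit: the boundary condition (via $|\tLambda_N(t,w)|\le C'\min(w,1-w)$, which passes to the weak-* limit), $\Lambda\ge0$, the limit equation and the initial datum.

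The one place you diverge is the compactness of $\tX_N$, and there your refusal to claim more is justified, not a gap. The paper invokes Arzel\`a--Ascoli from $\partial_t\tX_N\in L^\infty$ and $\partial_w\tX_N\in L^\infty(0,T;L^1(0,1))$. An $L^1$ bound on the $w$-derivative gives no equicontinuity in $w$, and the two claims you could not prove are false under the stated hypotheses.

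\begin{itemize}
\item \emph{The ghost particle rules out uniform convergence for every datum.} We have $\tX_N(t,\tfrac{1}{N})-\tX_N(t,0)=x_1(t)-x_0(t)=\tfrac{1}{N}+\delta$ with $\delta>0$ fixed. This is incompatible with uniform convergence on $[0,T]\times[0,1]$ to a limit that would necessarily be uniformly continuous.
\item \emph{Vacuum in $\rho^0$ creates jumps.} Take $\rho^0=\mathbf{1}_{[0,1/2]}+\mathbf{1}_{[3/2,2]}$ and $u^0=0$. All assumptions hold, the particles never move, and $X(t,\cdot)=X^0$ jumps by $1$ at $w=1/2$. More generally, $|\partial_tX|\le C$ makes any jump of $X^0$ persist for short times. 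A density vanishing at the edge of its support also gives $\partial_wX^0\notin L^\infty$.
\end{itemize}

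So $X\in W^{1,\infty}$ in $w$ fails as well. Compact support of $\rho^0$ does not help, since a Lipschitz $X^0$ amounts to $\rho^0$ being bounded below on an interval. Your second suggested fix cannot work: the identity $X(t)=X^0+tU^0-\int_0^t\partial_w\Lambda\,ds$ carries the jumps of $X^0$ over to $X(t)$, up to an $O(t)$ error in $L^\infty$.

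What is true, and what you actually prove, is the following. $X$ is Lipschitz in time with values in $L^\infty(0,1)$, and $X(t,\cdot)$ is nondecreasing and BV with $D_wX\ge1$. Moreover $\tX_N,X_N\to X$ in $\mathcal C([0,T];L^1(0,1))$, hence in every $L^p$ with $p<\infty$ by the $L^\infty$ bound. Your Step 3 shows this is all the first-order limit needs. The terms the paper passes to the limit using uniform convergence of $\tX_N$ only require strong $L^1$ convergence against weak-* $L^\infty$ convergence of $\tLambda_N$ and $\partial_w\tLambda_N$. Your version is therefore the correct repair of the paper's argument. This relies on the multipliers being bounded functions; when $\tX_N$ is later paired with the measures $\tP_N$, $L^1$ convergence is no longer enough.

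Two smaller remarks.
\begin{itemize}
\item \emph{Your final hedge is unnecessary.} The limit identity says the nonnegative measure $\Lambda\,(D_wX-1)$ vanishes. This yields $(\partial_wX-1)\Lambda=0$ a.e.\ for the absolutely continuous part of $D_wX$ without any $L^\infty$ bound. It also shows that $\Lambda$ vanishes at jump points of $X$, i.e.\ at vacuum.
\item \emph{The discrete relation you quote is off by one index.} On the $i$-th cell, the Signorini condition pairs $\partial_w\tX_N-1=N(x_i-x_{i-1}-2r)$ with $\lambda_{i-1}$, not with $\lambda_i=\Lambda_N$. Your $O(1/N)$ estimate is unaffected, because $|\tLambda_N-\lambda_{i-1}|\le C'/N$ on that cell.
\end{itemize}
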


\medskip
\begin{proof}
We divide the proof into several steps.

\begin{itemize}
	\item From the uniform $L^\infty(0,T;L^2(0,1))$ bounds established in Proposition~\ref{prop:Linf-micro}, we deduce that, up to extraction of a subsequence,
	\[
	X_N \rightharpoonup^* X,
	\qquad
	\tX_N \rightharpoonup^* \tX
	\quad \text{in } L^\infty((0,T)\times(0,1)).
	\]
	\item The $L^\infty$ bound together with the BV estimate of	Proposition~\ref{prop:Linf-micro} ensures that $\partial_w \tX_N$ is bounded in $L^\infty(0,T;L^1(0,1))$,
	while $\partial_t \tX_N = \tU_N$ is bounded in $L^\infty$. 
	Hence, by the Arzelà–Ascoli theorem, we obtain
	\[
	\tX_N \to X \quad \text{strongly in } C([0,T]\times[0,1]).
	\]
	
	Moreover, we have the estimate
	\[
	\int_0^1 |\tX_N(t,w) - X_N(t,w)|\,dw
	= \frac{1}{2N} \sum_{i=0}^{N-1} (x_{i+1}(t) - x_i(t))
	= \frac{1}{2N} (x_N(t) - x_0(t))
	\leq \frac{C}{N},
	\]
	which implies that $X_N - \tX_N \to 0$ in $L^1$, and therefore
	\[
	X_N \to X \quad \text{strongly in } \mathcal{C}([0,T];L^1(0,1)).
	\]
	\item From the uniform $L^\infty$ bounds on $\Lambda_N$ and $\tLambda_N$,
	we extract a subsequence such that
	\[
	\Lambda_N, \tLambda_N \rightharpoonup^* \Lambda
	\quad \text{in } L^\infty((0,T)\times(0,1)).
	\]
	
	Moreover, for $w \in \big(\frac{i-1}{N},\frac{i}{N}\big]$, we have
	\begin{equation}\label{eq:diffLambda_N}
	|\tLambda_N(t,w) - \Lambda_N(t,w)|
	= \left|N(\lambda_i(t) - \lambda_{i-1}(t))
	\left(w - \frac{i-1}{N}\right)\right|
	\le |\lambda_i(t) - \lambda_{i-1}(t)|
	\le \frac{C'}{N},
	\end{equation}
	which shows that $\tLambda_N - \Lambda_N \to 0$ uniformly, and therefore
	both sequences have the same limit.
	
	Similarly, $\partial_w \tLambda_N$ converges weakly-* to $\partial_w \Lambda$.
	\item Passage to the limit in the exclusion relation.
	We start from
	\[
	(\partial_w \tX_N -1)\Lambda_N = 0
	\qquad \text{in } \mathcal D'((0,T)\times(0,1)).
	\]
	Let $\varphi \in \mathcal C_c^1((0,T)\times(0,1))$. Then
	\begin{align*}
	0
	&= \int_0^T \int_0^1 (\partial_w \tX_N -1)\Lambda_N \,\varphi \,dw\,dt \\
	&= \int_0^T \int_0^1 (\partial_w \tX_N -1)(\Lambda_N-\tLambda_N)\,\varphi \,dw\,dt 
	+ \int_0^T \int_0^1 (\partial_w \tX_N -1)\tLambda_N\,\varphi \,dw\,dt .
	\end{align*}
	Using
	\[
	(\partial_w \tX_N)\varphi
	= \partial_w(\tX_N\varphi)-\tX_N\,\partial_w\varphi,
	\]
	and integrating by parts in $w$, we obtain
	\begin{align*}
	0
	&= \int_0^T \int_0^1 (\partial_w \tX_N -1)(\Lambda_N-\tLambda_N)\,\varphi \,dw\,dt \\
	&\quad - \int_0^T \int_0^1 \tLambda_N \,\varphi \,dw\,dt
	-\int_0^T \int_0^1 \tX_N\,\partial_w\tLambda_N\,\varphi \,dw\,dt
	-\int_0^T \int_0^1 \tX_N\,\tLambda_N\,\partial_w\varphi \,dw\,dt \\
	&=: R_N + T_N^1 + T_N^2 + T_N^3 .
	\end{align*}
	
	First, the remainder term tends to zero. Indeed,
	\[
	|R_N|
	\le
	\|\varphi\|_{L^\infty_{t,w}}
	\|\Lambda_N-\tLambda_N\|_{L^\infty_{t,w}}
	\int_0^T \int_0^1 |\partial_w\tX_N-1|\,dw\,dt.
	\]
	Since $\|\Lambda_N-\tLambda_N\|_{L^\infty_{t,w}}\to 0$ and
	$\partial_w\tX_N$ is uniformly bounded in $L^\infty(0,T;L^1(0,1))$,
	we deduce that
	\[
	R_N \to 0
	\qquad\text{as } N\to +\infty.
	\]
	Next, by the weak-* convergence $\tLambda_N \rightharpoonup^* \Lambda$ in
	$L^\infty((0,T)\times(0,1))$, we have
	\[
	T_N^1 \to - \int_0^T \int_0^1 \Lambda\,\varphi \,dw\,dt.
	\]
	Similarly, combining the strong convergence $\tX_N \to X$ in
	$\mathcal C([0,T]\times[0,1])$ with the weak-* convergence
	$\partial_w\tLambda_N \rightharpoonup^* \partial_w\Lambda$, we obtain
	\[
	T_N^2 \to - \int_0^T \int_0^1 X\,\partial_w\Lambda\,\varphi \,dw\,dt.
	\]
	Finally, using again the strong convergence of $\tX_N$ and the weak-* convergence
	of $\tLambda_N$, we get
	\[
	T_N^3 \to - \int_0^T \int_0^1 X\,\Lambda\,\partial_w\varphi \,dw\,dt.
	\]
	Passing to the limit in the above identity yields
	\[
	- \int_0^T \int_0^1 \Lambda\,\varphi \,dw\,dt
	- \int_0^T \int_0^1 X\,\partial_w\Lambda\,\varphi \,dw\,dt
	- \int_0^T \int_0^1 X\,\Lambda\,\partial_w\varphi \,dw\,dt
	= 0.
	\]
	Equivalently,
	\[
	\int_0^T \int_0^1 (\partial_w X -1)\Lambda\,\varphi \,dw\,dt = 0,
	\]
	for every $\varphi \in \mathcal C_c^1((0,T)\times(0,1))$. Therefore,
	\[
	(\partial_w X -1)\Lambda = 0
	\qquad \text{in } \mathcal D'((0,T)\times(0,1)).
	\]
\end{itemize}
\end{proof}

We have obtained the convergence of the interpolated variables and identified the limit $(X,\Lambda)$ as a solution of the first-order differential inclusion.
We now return to the second-order formulation of the dynamics. 
In particular, we introduce the limit velocity and pressure, and recover the corresponding second-order system in Lagrangian coordinates.

\subsection{The second-order differential inclusion}\label{ssec:cvg2}

We now turn to the second-order formulation of the dynamics.
We proceed as follows. 
First, we derive uniform bounds on the interpolated velocities and obtain compactness. Then, we introduce the limit pressure as a measure and establish its convergence. Finally, we pass to the limit in the second-order differential inclusion and in particular in the exclusion relation.

\begin{prop}[Controls on $U_N$ and $\tU_N$]
	From the microscopic estimates, we obtain
	\begin{equation}\label{eq:bound-UN-Linf}
	\|U_N\|_{L^\infty((0,T)\times(0,1))} + \|\tU_N\|_{L^\infty((0,T)\times(0,1))} \leq C.
	\end{equation}
	as well as the Oleinik-type inequality
	\begin{equation}\label{eq:Oleinik-tUN}
	\partial_w \tU_N < \dfrac{\partial_w \tX_N}{t}.
	\end{equation}
	As a consequence, we have a BV bound away from the initial time: for every $\alpha > 0$
	\begin{equation}\label{pwUN-L1}
	\sup_{t \in[\alpha,T]} \|\partial_w \tU_N(t)\|_{L^1(0,1)} \leq C_\alpha,
	\end{equation}
	for some constant $C_\alpha>0$ independent of $N$.
\end{prop}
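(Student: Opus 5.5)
The plan has three parts: the $L^\infty$ bound, the Oleinik inequality, and the BV bound away from $t=0$.

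\textbf{The $L^\infty$ bound.} The bound \eqref{eq:bound-UN-Linf} follows from the pointwise estimate \eqref{eq:sup-micro-u-L}. Since $U_N$ takes the values $u_i(t)$, and $\tU_N$ is a convex combination of consecutive values $u_{i-1}(t),u_i(t)$, both are bounded by $\sup_i|u_i(t)|\le C'$. One point needs care: the artificial particle $x_0$ of \eqref{df:x0}. I will set $u_0(t):=\dot x_0(t)=u_1(t)$, consistent with $x_0=x_1-1/N-\delta$, so $u_0$ is bounded as well.

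\textbf{The Oleinik inequality.} On each cell $\big(\frac{i-1}{N},\frac iN\big)$ we have $\partial_w\tU_N=N(u_i-u_{i-1})$ and $\partial_w\tX_N=N(x_i-x_{i-1})>0$. Hence \eqref{eq:Oleinik-tUN} is exactly \eqref{eq:Oleinik-micro} from Proposition~\ref{prop:Oleinik}, multiplied by $N(x_i-x_{i-1})>0$. For the first cell ($i=1$) the left-hand side vanishes, since $u_1-u_0=0$, so the inequality holds trivially for $t>0$.

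\textbf{The BV bound.} This is the step requiring an actual argument, via the standard one-sided trick. Fix $t\ge\alpha$ and write $f=\partial_w\tU_N(t,\cdot)$. Then
\[
|f| = 2f^+ - f,\qquad \int_0^1 f\,dw = u_N(t)-u_0(t),
\]
and this integral is bounded by $2C'$. For the positive part, the Oleinik inequality gives $f^+\le (\partial_w\tX_N)/t$, because $\partial_w\tX_N\ge1>0$. Integrating and using the BV estimate \eqref{eq:BV_XN},
\[
\int_0^1 f^+\,dw\le \frac1\alpha\int_0^1\partial_w\tX_N\,dw=\frac{x_N(t)-x_0(t)}{\alpha}\le \frac{C'}{\alpha}.
\]
Combining the two, $\|\partial_w\tU_N(t)\|_{L^1}\le 2C'/\alpha+2C'=:C_\alpha$, uniformly in $N$ and in $t\in[\alpha,T]$.

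The main (mild) obstacle is the boundary cell. We must check that the convention for $u_0$ keeps both the sup bound and the Oleinik inequality valid, and that $\delta$ enters only through $x_N-x_0\le C'$, which is already covered by Proposition~\ref{prop:Linf-micro}.
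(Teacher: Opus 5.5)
Your proposal is correct and matches the paper's proof: the $L^\infty$ bound comes from the microscopic sup estimate, the Oleinik inequality is the cellwise rescaling of \eqref{eq:Oleinik-micro}, and the $L^1$ bound follows from $|f| = 2f^+ - f$ with $\int_0^1 \partial_w \tX_N\,dw = x_N - x_0$ and $\int_0^1 \partial_w \tU_N\,dw = u_N - u_0$. The paper leaves the artificial particle implicit; your convention $u_0 = \dot x_0 = u_1$ makes the first cell trivial in both the sup bound and the Oleinik inequality.
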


\medskip
\begin{proof}
		The $L^\infty$ bound~\eqref{eq:bound-UN-Linf} follows directly from the
		microscopic estimates.
		Estimate~\eqref{eq:Oleinik-tUN} is the continuous counterpart of
		\eqref{eq:Oleinik-micro}.\\
		We now derive the $L^1$ bound on $\partial_w \tU_N$. We write
		\begin{align*}
		\|\partial_w \tU_N(t)\|_{L^1(0,1)}
		&= 2 \int_0^1 (\partial_w \tU_N(t))_+\, dw 
		- \int_0^1 \partial_w \tU_N(t)\, dw.
		\end{align*}
		Using the Oleinik estimate~\eqref{eq:Oleinik-tUN}, we obtain
		\[
		(\partial_w \tU_N)_+ \le \frac{\partial_w \tX_N}{t},
		\]
		hence
		\[
		\int_0^1 (\partial_w \tU_N(t))_+\, dw
		\le \frac{1}{t} \int_0^1 \partial_w \tX_N(t,w)\,dw
		= \frac{1}{t} (x_N(t) - x_0(t)).
		\]
		Moreover,
		\[
		\int_0^1 \partial_w \tU_N(t)\, dw
		= \tU_N(t,1) - \tU_N(t,0).
		\]
		Combining these estimates, we obtain
		\[
		\|\partial_w \tU_N(t)\|_{L^1(0,1)}
		\le \frac{2}{t}(x_N(t)-x_0(t)) 
		- \big(\tU_N(t,1)-\tU_N(t,0)\big),
		\]
		which is uniformly bounded with respect to $N$ thanks to the estimates on $X_N$ and $\tU_N$.
\end{proof}

\medskip

We now introduce the pressure variable associated with the second-order formulation. 
At the discrete level, it is defined as the time derivative of the Lagrange multipliers.
Due to the low regularity in time of the multipliers, the pressure is only defined as a distribution. 
We therefore establish a uniform bound on the pressure as a measure in time.

\begin{prop}[Bound on the pressure]\label{prop:pressure-bound}
	The interpolated pressure $\tP_N$ satisfies the uniform estimate
	\begin{equation}
	\|\tP_N\|_{\mathcal{M}(0,T; L^1(0,1))} \leq C,
	\end{equation}
	where $C>0$ is independent of $N$. 
\end{prop}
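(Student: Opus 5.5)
Since $p_i\geq 0$ in $\mathcal D'(0,T)$ by Proposition~\ref{prop:micro-pressure}, each $p_i$ is a nonnegative Radon measure on $(0,T)$. The interpolation $\tP_N(t,\cdot)$ is piecewise affine in $w$ with nodal values $p_0=0,p_1,\dots,p_N$, so $\tP_N\geq 0$ as well. The total variation of $\tP_N$ in $\mathcal M(0,T;L^1(0,1))$ therefore reduces to its total mass,
\[
\int_0^T\!\!\int_0^1 \tP_N\,dw\,dt \;\leq\; \frac1N\sum_{i=1}^{N} p_i\big((0,T)\big) = \frac1N\sum_{i=1}^N\big(\lambda_i(T)-\lambda_i(0)\big),
\]
where $\lambda_i$ is nondecreasing in time (as shown in the proof of Proposition~\ref{prop:micro-pressure}), so $p_i((0,T))=\lambda_i(T^-)-\lambda_i(0^+)$. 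The bound then follows from a uniform bound on $\lambda_i(T)$. By~\eqref{eq:sup-micro-u-L}, $N|\lambda_i(t)|\leq C'$, hence $\lambda_i(T)\leq C'/N$. This gives an upper bound $C'/N$ for the mass, which is even better than needed.

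The only subtlety is the normalization of the pressure interpolation. If one reads $\tP_N$ literally with nodal values $p_i=\dot\lambda_i$, the estimate above is immediate. The real issue is that the natural pressure in the limit equation $\partial_t U_N+\partial_w \tP_N=0$ carries the factor $N$: from $u_i=u_i^0-N(\lambda_i-\lambda_{i-1})$ one gets $\dot u_i=-N(p_i-p_{i-1})$, so $\partial_w\tP_N$ with $\tP_N$ built on $p_i$ reproduces exactly $N(p_i-p_{i-1})$, consistent with Lemma~\ref{lem:discPDE2}. The nontrivial statement is thus the bound on $N\sum_i p_i((0,T))/N=\sum_i \big(\lambda_i(T)-\lambda_i(0)\big)$, which I expect is what is intended. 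The estimate $N\lambda_i\leq C'$ only gives $\sum_i \lambda_i(T)\leq C'$, and this is uniform in $N$. So I would conclude via
\[
\|\tP_N\|_{\mathcal M(0,T;L^1)}\leq \sum_{i=1}^{N-1}\lambda_i(T)\leq \sup_i N\lambda_i(T)\leq C'.
\]

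To be safe, I would also give a more intrinsic argument that justifies the role of $\delta$. Test the momentum equation $\partial_t\tU_N+\partial_w\tP_N=0$ (or its discrete version) against the weight $\tX_N$, in the spirit of a virial identity. Summation by parts gives
\[
\sum_i p_i(x_{i+1}-x_i)=\sum_i p_i\,2r,
\]
because $p_i$ is supported on contacts by Proposition~\ref{prop:micro-pressure}. Hence
\[
\frac{d}{dt}\sum_i \tfrac1N u_ix_i=\tfrac1N\sum_i u_i^2-\text{(pressure mass)},
\]
where the boundary terms involve $x_0$ and the artificial gap $\delta$ so that $\lambda_0=0$ does not degenerate. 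Integrating in time and using the $L^\infty$ bounds on $u_i,x_i$ (Proposition~\ref{prop:Linf-micro} and~\eqref{eq:bound-UN-Linf}) bounds the total pressure mass by $C(T)$.

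The main obstacle is the justification of the summation by parts with measure-valued $p_i$ against the merely Lipschitz $x_i$. The product $p_i(x_{i+1}-x_i-2r)$ is defined in Proposition~\ref{prop:micro-pressure} through the limit $h\to0$ of difference quotients. I would handle this by working with the monotone functions $\lambda_i$ directly, that is, by integrating by parts in time in the Stieltjes sense.
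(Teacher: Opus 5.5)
Your first display is correct: $\tP_N\ge 0$, and the total mass of $\tP_N$ is exactly $\frac1N\sum_{i=1}^{N-1}p_i((0,T))=\frac1N\sum_i(\lambda_i(T^-)-\lambda_i(0^+))$. This uses that the hat functions have integral $1/N$ and that $p_0=p_N=0$. The way you finish is wrong, though.

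You invoke $\sup_i N|\lambda_i(t)|\le C'$ from \eqref{eq:sup-micro-u-L}, and as stated this cannot hold. From $|N(\lambda_i-\lambda_{i-1})|=|u_i^0-u_i|\le 2C_0'$ and $\lambda_0=0$ you only get $|\lambda_i|\le 2C_0'$, which is what Propositions~\ref{prop:bound-micro} and~\ref{prop:Linf-micro} actually assert. Take the discrete version of the two-block collision of Appendix~\ref{app:uniqueness}. After sticking, $u\equiv 0$ and $\lambda_i=\min(i,N-i)/N$, so $N\lambda_{N/2}=N/2$. Both of your quantitative claims therefore fail:
\begin{itemize}
\item The mass of $\tP_N$ tends to $1/4$, not $O(1/N)$. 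An $O(1/N)$ bound would force $P=0$ in the limit.
\item Your ``renormalized'' quantity $\sum_i\lambda_i(T)\approx N/4$ is not uniformly bounded, so your final chain breaks at $\sup_i N\lambda_i(T)\le C'$.
\end{itemize}
The renormalization is also unfounded in itself. The factor $N$ in $\dot u_i=-N(p_i-p_{i-1})$ comes from the slope of the affine interpolation $\partial_w\tP_N$. There is no hidden factor $N$ in $\tP_N$.

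The repair is one line, and it gives a proof simpler than the paper's. Since $\lambda_i\ge 0$ is nondecreasing, $p_i((0,T))\le\lambda_i(T^-)\le\|\Lambda_N\|_{L^\infty}\le C'$ by Proposition~\ref{prop:Linf-micro}. Hence $\|\tP_N\|_{\mathcal M(0,T;L^1(0,1))}\le \frac{N-1}{N}C'$.

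The paper instead tests $\partial_tU_N+\partial_w\tP_N=0$ against $\big((x_N-x_0)w-\tX_N\big)\phi(t)$. It replaces $\partial_w\tX_N$ by $1$ on the support of $\tP_N$ via the exclusion relation, and needs $x_N-x_0-1\ge\delta$ to extract the mass, which yields $C/\delta$.

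Your virial sketch uses the same mechanism with weight $x_i$, and done correctly it does not need $\delta$ either. Since $x_i$ is continuous, $d(u_ix_i)=x_i\,du_i+u_i^2\,dt$ holds, so the obstacle you raise is not a real one. Summation by parts with $p_0=p_N=0$ and $p_i(x_{i+1}-x_i-2r)=0$ then gives
\[
\frac1N\sum_{i=1}^{N-1}p_i((0,T))=\Big[\frac1N\sum_i u_ix_i\Big]_{0^+}^{T^-}-\int_0^T\frac1N\sum_i u_i^2\,dt .
\]
Here the pressure enters with a plus sign, not a minus, and there are no boundary terms involving $x_0$ or $\delta$. Either route proves the proposition once corrected; the chain you actually conclude with does not.
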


\medskip

\begin{proof}
	We test Equation~\eqref{eq:PDE-Lagr-N-2} against the test function
	\begin{align}
	\varphi(t,w) 
	& = \Big((x_N(t) - x_0(t))w -  \tX_N(t,w)\Big)\, \phi(t), \qquad \phi \in \mathcal{C}^1_c((0,T)), \ \phi \geq 0,
	\end{align}
	Observe that at the boundaries $\varphi(t,0)=\varphi(t,1)=0$ and	
	\begin{align*}
	\partial_t \varphi(t,w) & = \Big((u_N(t) - u_0(t))w - \widetilde U_N(t,w)\big)\phi(t) +  \Big((x_N(t) - x_0(t))w- \tX_N(t,w) \Big)\, \phi'(t),\\
	\partial_w \varphi(t,w) & = \Big((x_N(t) - x_0(t)) - \partial_w \tX_N(t,w)\Big)\, \phi(t).
	\end{align*}
	Using~\eqref{eq:PDE-Lagr-N-2}, we obtain
	\begin{align*}
	& \int_0^T \int_0^1 \Big( (x_N(t) - x_0(t)) -\partial_w \tX_N(t,w)\Big)\phi(t) \tP_N(w,dt)dw \\
	& = - \int_0^T \int_0^1 \left( (U_N(t,0) -U_N(t,1))w - \widetilde U_N(t,w)) \right)U_N(t,w) \phi(t)\,  dwdt \\
	& \quad- \int_0^T \int_0^1 U_N(t,w) \Big((x_N(t) - x_0(t))w -  \tX_N(t,w)\Big)\, \phi'(t)\,  dwdt.
	\end{align*}
	By the previously established bounds on $U_N$ and $\tX_N$, the right-hand side is uniformly bounded. 
	Hence there exists a constant $C>0$, independent of $N$, such that
	\begin{align}\label{eq:test-pressure-bound}
	& \left|\int_0^T\int_0^1  \Big((x_N(t) - x_0(t)) -  \partial_w \tX_N(t,w)\Big)\phi(t) \tP_N(dt,w)dw \right|\\
	& \leq C(\|U_N\|_{L^\infty_{t,w}}^2 + \|U_N\|_{L^2_tL^2_w}^2 + \|\tX_N\|_{L^2_t L^2_w}^2).
	\end{align}
	We now use the exclusion relation. Since
	\[
	(\partial_w \tX_N-1)\,\tP_N=0,
	\]
	we may replace $\partial_w\tX_N$ by $1$ on the support of $\tP_N$, and therefore
	\begin{align*}
	&\int_0^T\int_0^1
	\Big((x_N(t) - x_0(t))-\partial_w \tX_N(t,w)\Big)\phi(t)\,\tP_N(dt,w)\,dw
	\\
	&=
	\int_0^T\int_0^1
	\Big((x_N(t)-x_0(t)) -1\Big)\phi(t)\,\tP_N(dt,w)\,dw.
	\end{align*}
	Next, using
	\[
	x_N(t)-x_0(t)
	= (x_N(t)-x_1(t)) + (x_1(t)-x_0(t)),
	\]
	together with
	\[
	x_N(t)-x_1(t)\geq 2r(N-1), \qquad x_1(t)-x_0(t) = 2r+\delta,
	\]
	and recalling that $2r=1/N$, we obtain
	\[
	(x_N(t)-x_0(t)) -1 \geq 2r(N-1)+(2r+\delta) -1 =\delta.
	\]
	Hence
	\begin{align*}
	&\int_0^T\int_0^1
	\Big((x_N(t)-x_0(t)) - \partial_w \tX_N(t,w)\Big)\phi(t)\,\tP_N(dt,w)\,dw
	\\
	&\geq \delta	\int_0^T\int_0^1 \phi(t)\,\tP_N(dt,w)\,dw.
	\end{align*}
	Combining this with~\eqref{eq:test-pressure-bound}, we conclude that
	\[
	\int_0^T\int_0^1 \phi(t)\,\tP_N(dt,w)\,dw \le \frac{C}{\delta},
	\]
	for every nonnegative $\phi\in\mathcal C_c^1((0,T))$. This proves the desired
	uniform bound on $\tP_N$. 

\end{proof}

\medskip

\begin{rmk}
	In the previous proof, the parameter $\delta$, defined as the distance between the first particle and the fictitious particle $0$ introduced in~\eqref{df:x0}, plays a crucial role in deriving uniform estimates on the pressure. 
	This technical device can be interpreted as the microscopic counterpart of assumptions commonly imposed at the macroscopic level, where fully congested configurations are excluded, we refer for instance to Eq.~(13) in~\cite{perrin2015}. 
	In this sense, the introduction of $\delta$ prevents the formation of a globally saturated configuration.
\end{rmk}

\medskip
We are now in a position to pass to the limit in the second-order differential inclusion. 
Combining the uniform bounds on the velocities and the pressure, we extract converging subsequences and identify the limit system.

\medskip

\begin{prop}[Limit of the second-order differential inclusion]\label{prop:2ndorder-LagrPDE}
There exist
\[
U \in L^\infty(0,T;BV(0,1)),
\qquad
P \in \mathcal{M}_+((0,T)\times(0,1)),
\]
such that, up to extraction of a subsequence, the following convergences hold:
\begin{align*}
U_N &\rightharpoonup^* U \quad \text{in} \quad L^\infty((0,T) \times(0,1)), \\
\tU_N &\rightharpoonup^* U \quad \text{in} \quad L^\infty(\alpha,T;BV(0,1)), ~ \forall \ \alpha > 0,\\
\tP_N &\rightharpoonup^* P \quad \text{in}~ \mathcal{M}_+((0,T) \times (0,1)).
\end{align*}
Moreover, the limit $(X,U,P)$ satisfies
\begin{subnumcases}{\label{eq:PDE-Lagr-2}}
\partial_t X = U, \qquad \text{a.e},\\
\partial_t U + \partial_w P = 0 \qquad \text{in} \quad \mathcal{D}',\\
\partial_w X \geq 1, \quad (\partial_w X - 1)\ P = 0, \quad P \geq 0 \quad  \text{in} \quad \mathcal{D}',
\end{subnumcases}
with the Oleinik-type inequality
\begin{equation}\label{eq:Oleinik-Lagr}
\partial_w U < \dfrac{\partial_w X}{t} \quad \text{in }~ \mathcal{D'}((0,T) \times (0,1)).
\end{equation}
\end{prop}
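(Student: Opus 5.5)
The plan is to extract limits by weak-* compactness and then pass to the limit in each relation of Lemma~\ref{lem:discPDE2}. The only nonlinear relation is the exclusion constraint, which we treat as in Proposition~\ref{prop:cvg1}.

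\textbf{Step 1: compactness.} From~\eqref{eq:bound-UN-Linf} we extract $U_N \rightharpoonup^* U$ in $L^\infty((0,T)\times(0,1))$. On each $(\alpha,T)$, estimate~\eqref{pwUN-L1} gives a bound for $\tU_N$ in $L^\infty(\alpha,T;BV(0,1))$. A diagonal extraction over $\alpha=1/k$ then yields a weak-* limit of $\tU_N$ there. Since
\[
\|\tU_N-U_N\|_{L^1(0,1)} \le \frac{1}{N}\|\partial_w\tU_N\|_{L^1(0,1)} \le \frac{C_\alpha}{N},
\]
this limit coincides with $U$, and so $U\in L^\infty(0,T;BV(0,1))$ on every $(\alpha,T)$.

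The relation $\partial_t X_N=U_N$, combined with $X_N\to X$ in $\mathcal C([0,T];L^1)$, gives $\partial_t X=U$. For the pressure, Proposition~\ref{prop:pressure-bound} together with the sign property $\tP_N\ge0$ (a convex combination of $p_{i-1},p_i\ge0$ by Proposition~\ref{prop:micro-pressure}) gives a uniform bound on the total mass on $(0,T)\times(0,1)$. Banach–Alaoglu then gives $\tP_N\rightharpoonup^* P\in\mathcal M_+$, and nonnegativity passes to the limit. The momentum equation $\partial_tU_N+\partial_w\tP_N=0$ is linear, so testing against $\varphi\in\mathcal C^1_c$ gives $\partial_tU+\partial_wP=0$ in $\mathcal D'$.

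\textbf{Step 2: Oleinik inequality.} Multiply~\eqref{eq:Oleinik-tUN} by $t\varphi$ with $\varphi\ge0$ and integrate by parts in $w$:
\[
-\iint t\,\tU_N\,\partial_w\varphi \le -\iint \tX_N\,\partial_w\varphi .
\]
Both sides pass to the limit by the weak-* convergence of $\tU_N$ and the uniform convergence of $\tX_N$. This gives $t\,\partial_wU\le\partial_wX$ in $\mathcal D'$; the strict inequality is understood in this distributional sense.

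\textbf{Step 3: exclusion relation (main obstacle).} We must pass to the limit in $(\partial_w\tX_N-1)\tP_N=0$, where $\tP_N$ converges only as a measure. Fix $\varphi\in\mathcal C^1_c((0,T)\times(0,1))$ and write, as in Proposition~\ref{prop:cvg1},
\[
\iint \partial_w\tX_N\,\varphi\,d\tP_N .
\]
Directly integrating by parts in $w$ is not possible, since $\tP_N$ has no $w$-regularity. Instead we use the momentum equation to trade $\partial_w\tP_N$ for $-\partial_tU_N$:
\[
\iint \partial_w\tX_N\,\varphi\,d\tP_N = -\iint \tX_N\,\partial_w\varphi\,d\tP_N - \iint \tX_N\,\varphi\,\partial_w\tP_N ,
\]
and
\[
-\iint \tX_N\,\varphi\,\partial_w\tP_N = \iint \tX_N\varphi\,\partial_t U_N = -\iint U_N\,\partial_t(\tX_N\varphi) = -\iint U_N\tU_N\varphi - \iint U_N\tX_N\,\partial_t\varphi .
\]
Here $\partial_w\tP_N$ should be read with the piecewise-constant structure of $P_N$; discrepancies between interpolations are $O(1/N)$, controlled through $\tLambda_N-\Lambda_N$ by the BV bound.

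The delicate term is $\iint U_N\tU_N\varphi$, a product of two weakly converging sequences. Strong $L^1$ convergence of $U_N$ on $(\alpha,T)\times(0,1)$ (with $\alpha$ below the support of $\varphi$) follows from the BV bound~\eqref{pwUN-L1} together with time equicontinuity in a negative space, since $\partial_tU_N=-\partial_w\tP_N$ is bounded in $\mathcal M_t W^{-1,1}_w$. Aubin–Lions–Simon then gives the required compactness, and the Oleinik estimate is exactly what provides it. The term $\iint \tX_N\partial_w\varphi\,d\tP_N$ converges because $\tX_N\to X$ uniformly while $\tP_N\rightharpoonup^*P$.

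Reversing the same identities at the limit, using $\partial_tU=-\partial_wP$ and $\partial_tX=U$, yields $\iint(\partial_wX-1)\varphi\,dP=0$. This is the meaning of $(\partial_wX-1)P=0$ in $\mathcal D'$. The step most likely to need care is justifying the limiting integration by parts with $P$ a measure and $X$ merely Lipschitz.
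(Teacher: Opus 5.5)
Your proposal is correct and follows the same route as the paper. Both pass to the limit in the linear equations by weak-* compactness. Both obtain the exclusion relation by integrating by parts in $w$, replacing $\partial_w\tP_N$ with $-\partial_t U_N$, using the uniform convergence of $\tX_N$, and absorbing the $P_N$/$\tP_N$ discrepancy through $\Lambda_N-\tLambda_N=O(1/N)$ together with the Oleinik-based BV bound. Your Aubin--Lions--Simon argument for the product term $\iint U_N\tU_N\varphi$ and your limit argument for the Oleinik inequality fill in two points the paper leaves implicit: its appeal to weak convergence of $\partial_t U_N$ combined with strong convergence of $\tX_N$ does not by itself control that product, and its proof does not treat the limiting Oleinik inequality at all.
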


\medskip
\begin{rmk}
	At the discrete level, the boundary pressures vanish since $p_0^N=p_N^N=0$, as a consequence of	$\lambda_0^N=\lambda_N^N=0$. 
	At the limit, however, the pressure $P$ is only obtained as a measure on $(0,T)\times(0,1)$, so that no classical spatial trace at $w=0,1$ is available in general.
	The natural boundary condition is therefore encoded at the level of the primitive:
	\[
	P = \partial_t\Lambda \qquad\text{with}\qquad
	\Lambda(t,0) = \Lambda(t,1) = 0 \quad\text{for a.e. } t \in (0,T).
	\]
	In particular, the boundary values of $P$ vanish only in the weak sense obtained by differentiating	the boundary condition on $\Lambda$ with respect to time.
\end{rmk}

\medskip
\begin{proof}
\begin{itemize}
	\item Thanks to the bounds~\eqref{eq:bound-UN-Linf} and~\eqref{pwUN-L1}, we have directly
	\begin{align*}
	U_N &\to U \quad \text{weakly-* in} \quad L^\infty((0,T) \times(0,1)), \\
	\tU_N &\to \tU \quad  \text{weakly-* in} \quad L^\infty(\alpha,T;BV(0,1)) \quad \forall \ \alpha > 0.
	\end{align*}
	We then show that the two limits $U$ and $\tU$ coincide. Indeed,
	\[
	\int_0^1 \left|\tU_N(t,w) - U_N(t,w)\right|dw
	= \dfrac{1}{2N} \sum_{i=1}^N |u_i(t) - u_{i-1}(t)|
	= \dfrac{1}{2N} |u_N(t) - u_0(t)| \leq \dfrac{C}{N} \to 0.
	\]
	Hence $\tU_N-U_N\to0$ strongly in $L^\infty(0,T;L^1(0,1))$, so both sequences converge to the same limit $U$.
	\item Since the sequence $(\tP_N)$ is bounded in $\mathcal{M}(0,T; L^1(0,1))$, there exists a limit measure $P$ such that, up to a subsequence, $(\tP_N)$ converges to $P$ in $\mathcal{M}_+((0,T) \times (0,1))$.
	\item	Let $\Psi \in \mathcal{C}^1_c(0,T; L^1(0,1))$. We have
	\[
	\langle P_N - \tP_N, \Psi \rangle
	= - \int_0^T \int_0^1 \big(\Lambda_N(t,w) - \tLambda_N(t,w)\big) \partial_t \Psi(t,w) \, dw dt.
	\]
	Hence, using~\eqref{eq:diffLambda_N}
	\[
	\left|\langle P_N - \tP_N, \Psi \rangle  \right|
	\leq \dfrac{C}{N} \|\partial_t \Psi\|_{L^1(0,T; L^\infty(0,1))},
	\]
	and therefore
	\begin{equation*}
	P_N - \tP_N \to 0 \quad \text{in}\quad W^{-1,\infty}(0,T; L^1(0,1)).
	\end{equation*}
	\item On the other hand, we have $(\partial_w \tX_N)_N$ bounded in $W^{1,\infty}(\alpha,T; L^1(0,1))$ (recall that $\tU_N = \partial_t \tX_N$ satisfies the estimate~\eqref{pwUN-L1}). As a consequence, we have
	\[
	\big(\partial_w \tX_N - 1\big) P_N = \big(\partial_w \tX_N - 1\big) \tP_N + R^1_N, 
	\]
	with $R^1_N :=  \big(\partial_w \tX_N - 1\big) (P_N - \tP_N)$ which tends to $0$ in the sense of distributions thanks to the previous item.

	\item Now, for $\Psi \in \mathcal C_c^1((0,T)\times(0,1))$, we have
	\begin{align*}
	0
	= \langle \tP_N,(\partial_w\tX_N-1)\Psi\rangle
	&= -\langle \partial_w\tP_N,(\tX_N-w)\Psi\rangle
	-\langle \tP_N,(\tX_N-w)\partial_w\Psi\rangle \\
	&=: T_N^1 + T_N^2.
	\end{align*}
	
	Since $\tX_N \to X$ strongly in $\mathcal C([0,T]\times[0,1])$ and
	$\tP_N \rightharpoonup^\ast P$ in $\mathcal M((0,T)\times(0,1))$, we obtain
	\[
	-T_N^2
	= \langle \tP_N,(\tX_N-w)\partial_w\Psi\rangle
	\longrightarrow
	\langle P,(X-w)\partial_w\Psi\rangle.
	\]
	
	On the other hand, we have
	\[
	\partial_w \tP_N = - \partial_t U_N
	\qquad \text{in } \mathcal D'((0,T)\times(0,1)).
	\]
	Since $U_N \rightharpoonup^* U$ in $L^\infty$, we deduce that
	\[
	\partial_t U_N \rightharpoonup^* \partial_t U
	\quad \text{in } W^{-1,\infty}(0,T;L^\infty(0,1)).
	\]
	Therefore, up to identification of the limit,
	\[
	\partial_w \tP_N \rightharpoonup^* -\partial_t U
	\qquad \text{in } W^{-1,\infty}(0,T;L^\infty(0,1)),
	\]
	that is,
	\[
	\partial_w P = -\partial_t U
	\qquad \text{in } \mathcal D'((0,T)\times(0,1)).
	\]
	Combining this weak convergence with the strong convergence of $\tX_N$, we obtain
	\begin{equation*}
	- T^1_N
	= \langle \partial_w \tP_N , (\tX_N- w) \Psi \rangle
	\ \longrightarrow \  \langle  \partial_w P , (X- w) \Psi \rangle.
	\end{equation*}
	We conclude that 
	\[
	\langle  \partial_w P , (\tX- w) \,\Psi \rangle + \langle  P , (\tX- w) \partial_w\Psi \rangle  = 0.
	\]
	This is exactly the weak formulation of the relation exclusion
	\[
	(\partial_w X-1)\,P=0
	\quad
	\text{in } \mathcal D'((0,T)\times(0,1)).
	\]
\end{itemize}
\end{proof}

\subsection{Eulerian variables and the constrained Euler equations}

\medskip

We have identified the limit system in Lagrangian coordinates, described by $(X,U,P)$. 
We now reformulate this system in Eulerian variables by introducing the density, velocity and pressure in physical space.
This change of variables allows us to identify the corresponding system of constrained Euler equations~\eqref{eq:macro-euler}.

\medskip
A key preliminary step is to ensure that the Eulerian density is well-defined and enjoys sufficient regularity. 
This is the purpose of the next lemma.

\begin{lem}[Regularity of the Eulerian density]\label{lem:density}
	Let $X \in W^{1,\infty}((0,T)\times(0,1))$ be such that, for a.e. $t\in(0,T)$,
	the map $w \mapsto X(t,w)$ is nondecreasing and satisfies
	\[
	\partial_w X(t,w)\ge 1
	\qquad \text{for a.e. } w\in(0,1).
	\]
	Define
	\begin{equation}\label{df:rho}
	\rho(t,\cdot):=X(t,\cdot)_\# \mathcal L^1_{(0,1)}.
	\end{equation}
	Then the following properties hold:
	
	\begin{enumerate}
		\item The curve $t\mapsto \rho(t)$ belongs to $AC([0,T];\mathcal P_2(\mathbb R))$.
		
		\item There exists a bounded Borel function, still denoted by $\rho$, such that
		\[
		\rho(dt,dx)=\rho(t,x)\,dt\,dx,
		\qquad 0\le \rho(t,x)\le 1
		\quad \text{for a.e. } (t,x)\in(0,T)\times\mathbb R.
		\]
	\end{enumerate}
\end{lem}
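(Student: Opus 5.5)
The plan is to work directly with the pushforward structure and to use two standard tools: the isometry between monotone maps in $L^2(0,1)$ and $\mathcal P_2(\R)$, and the area formula for strictly increasing Lipschitz maps.

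\emph{Absolute continuity in time.} For every $s,t$, the coupling $(X(s,\cdot),X(t,\cdot))_\#\mathcal L^1_{(0,1)}$ is admissible between $\rho(s)$ and $\rho(t)$. This gives
\[
W_2(\rho(s),\rho(t))\le \|X(t,\cdot)-X(s,\cdot)\|_{L^2(0,1)}\le \|\partial_t X\|_{L^\infty}|t-s|.
\]
The second moment satisfies $\int x^2 d\rho(t)=\|X(t)\|_{L^2}^2<\infty$, because $X$ is bounded (it is $W^{1,\infty}$ on a bounded domain). Hence $t\mapsto\rho(t)$ is Lipschitz, and in particular absolutely continuous, with values in $\mathcal P_2(\R)$. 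Monotonicity is not even needed for this step.

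\emph{Density bound at fixed time.} Fix $t$ in the full-measure set where $\partial_w X(t,\cdot)\ge1$ a.e. Then $X(t,\cdot)$ is Lipschitz and, for $w_1<w_2$, $X(t,w_2)-X(t,w_1)=\int_{w_1}^{w_2}\partial_wX\ge w_2-w_1$. So $X(t,\cdot)$ is injective, and its inverse $Y(t,\cdot)$ on the interval $X(t,(0,1))$ is $1$-Lipschitz. For any Borel set $A$,
\[
\rho(t)(A)=\mathcal L^1\big(X(t,\cdot)^{-1}(A)\big)=\mathcal L^1\big(Y(t,A\cap X(t,(0,1)))\big)\le \mathcal L^1(A),
\]
since $1$-Lipschitz maps do not increase Lebesgue measure. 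Therefore $\rho(t)\ll\mathcal L^1$ with density $\rho(t,x)=\partial_xY(t,x)\mathbf 1_{X(t,(0,1))}(x)\in[0,1]$. Equivalently, $\rho(t,X(t,w))=1/\partial_wX(t,w)$.

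\emph{Joint measurability in $(t,x)$.} This is the step I expect to be the only real obstacle: we need a single Borel function on $(0,T)\times\R$, not just a family indexed by $t$. I would define it through the cumulative distribution
\[
F(t,x)=\rho(t)((-\infty,x])=\mathcal L^1\{w: X(t,w)\le x\}.
\]
Since $X$ is continuous, $F$ is Borel in $(t,x)$. For each good $t$ it is $1$-Lipschitz in $x$. Setting $\rho(t,x):=\limsup_{h\downarrow0}(F(t,x+h)-F(t,x))/h$, taking $h$ rational, gives a Borel function. It is bounded by $1$ and nonnegative everywhere, and for a.e. $t$ it equals the density above for a.e. $x$.

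Finally, the space-time measure $\rho(dt,dx):=dt\otimes\rho(t)(dx)$ is well defined because $t\mapsto\int\varphi\,d\rho(t)=\int_0^1\varphi(X(t,w))dw$ is measurable. Applying Fubini then gives $\rho(dt,dx)=\rho(t,x)\,dt\,dx$, with $0\le\rho\le1$ a.e.
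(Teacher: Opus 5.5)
Your proof is correct, and part 1 (the coupling $(X(s,\cdot),X(t,\cdot))_\#\mathcal L^1_{(0,1)}$ bounding $W_2$ by $\|X(t)-X(s)\|_{L^2}$) is exactly the paper's argument. Part 2 follows a different route.

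\textbf{Fixed-time density bound.} The paper only checks $\mu_t(I)\le|I|$ on intervals, using that the preimage of an interval under a nondecreasing map is an interval, and then extends to Borel sets ``by approximation.'' You instead use injectivity and the $1$-Lipschitz inverse. This gives $\rho(t)(A)\le\mathcal L^1(A)$ for every Borel $A$ at once. It also gives the formula $\rho(t,X(t,w))=1/\partial_wX(t,w)$, which the paper only asserts later in the complementarity argument.

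\textbf{Joint measurability.} The paper never constructs a representative. It defines the space-time pushforward $\mu=(\mathrm{Id},X)_\#(\mathcal L^1_{(0,T)}\otimes\mathcal L^1_{(0,1)})$ and shows $\mu(B)=\int_0^T\mu_t(B_t)\,dt\le|B|$ by slicing. Radon--Nikodym on $(0,T)\times\R$ then yields a Borel density with values in $[0,1]$ directly. This makes the step you flagged as the main obstacle essentially free, and it is exactly what the statement $\rho(dt,dx)=\rho(t,x)\,dt\,dx$ asks for.

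Your CDF/difference-quotient construction is longer. In exchange, it produces an explicit, everywhere-defined Borel representative whose time slices are known to agree with the fixed-time densities. The Radon--Nikodym density gives this only after an extra Fubini / uniqueness-of-disintegration argument.

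One small point: to claim the $\limsup$ is bounded by $1$ \emph{everywhere}, you need one extra step. Either note that, for the Lipschitz representative of $X$, the inequality $X(t,w_2)-X(t,w_1)\ge w_2-w_1$ passes from a.e.\ $t$ to every $t$ by continuity, or simply truncate the $\limsup$ to $[0,1]$.
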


\medskip
For sake of clarity, the proof of this lemma is postponed to the appendix, see~\ref{lem:density-app}.

The main result of this section is the following proposition which shows that a Lagrangian solution $(X,U,P)$ defines a weak solution $(\rho, \rho u,p)$ of the constrained Euler equations.

\medskip
\begin{prop}[Eulerian formulation]
	Let $(X,U,P)$ be the triplet constructed in Proposition~\ref{prop:2ndorder-LagrPDE}, solution to the second-order differential inclusion~\eqref{eq:PDE-Lagr-2} with initial data $(X^0,U^0)$ , and let $\Lambda$ be the associated multiplier, i.e.
	\[
	P = \partial_t \Lambda \quad \text{in } \mathcal D'((0,T)\times(0,1)).
	\]
	We define the Eulerian density $\rho$ by the previous Lemma~\ref{lem:density} (see the push-forward formula~\eqref{df:rho}). 
	Similarly, we define the Eulerian momentum and pressure by
	\begin{align*}
	(\rho u)(t,\cdot) &= X(t,\cdot)_\# \big(U(t,\cdot)\,\mathcal L^1_{(0,1)}\big),\\
	p &= (\mathrm{Id},X)_\# P.
	\end{align*}
	
	Then $(\rho,\rho u,p)$ is a distributional solution of the constrained Euler system~\eqref{eq:macro-euler} satisfying an Oleinik inequality $\partial_x u < 1/t$ in the sense of distributions.
\end{prop}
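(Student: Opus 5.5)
The proof tests each Eulerian equation against functions of the form $\zeta(t,X(t,w))$ and transfers it to the Lagrangian system~\eqref{eq:PDE-Lagr-2}. The only genuinely Eulerian facts are those of Lemma~\ref{lem:density}, which gives $0\le\rho\le1$. Fix $\zeta\in\mathcal C^1_c((0,T)\times\R)$. Since $X\in W^{1,\infty}$, the composed function $\psi(t,w):=\zeta(t,X(t,w))$ is Lipschitz, with
\[
\partial_t\psi=\partial_t\zeta(t,X)+U\,\partial_x\zeta(t,X),\qquad \partial_w\psi=\partial_x\zeta(t,X)\,\partial_wX .
\]

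\textbf{Mass.} By definition of the push-forward, $\int\!\!\int \rho\,\partial_t\zeta+\rho u\,\partial_x\zeta\,dx\,dt=\int\!\!\int(\partial_t\zeta(t,X)+U\partial_x\zeta(t,X))\,dw\,dt=\int\!\!\int\partial_t\psi\,dw\,dt=0$, using $\partial_tX=U$ a.e. The last integral vanishes because $\zeta$ is compactly supported in time.

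\textbf{Momentum.} Test $\partial_tU+\partial_wP=0$ against $\psi$. Since $\psi$ is only Lipschitz, first mollify $\psi$ in $(t,w)$, or better, use the primitive: write $P=\partial_t\Lambda$ with $\Lambda\in L^\infty_tW^{1,\infty}_w$ from Proposition~\ref{prop:cvg1}. The pairing $\langle P,\partial_w\psi\rangle$ then becomes an integral of $\Lambda$ against $\partial_t\partial_w\psi$, which is well defined once one notes that $\partial_wX$ has a time derivative $\partial_wU\in L^\infty_{loc}((0,T];\mathcal M)$ by~\eqref{pwUN-L1}. This gives
\[
\int\!\!\int U\,\partial_t\psi\,dw\,dt+\langle P,\partial_x\zeta(t,X)\,\partial_wX\rangle=0 .
\]
Expanding, the first term is $\int\!\!\int\rho u\,\partial_t\zeta+\rho u^2\partial_x\zeta$, where $\rho u^2:=X_\#(U^2\mathcal L^1)$. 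For the second term, the exclusion relation $(\partial_wX-1)P=0$ from~\eqref{eq:PDE-Lagr-2} lets me replace $\partial_wX$ by $1$, giving $\langle P,\partial_x\zeta(t,X)\rangle=\langle p,\partial_x\zeta\rangle$ with $p=(\mathrm{Id},X)_\#P$. Identifying $\rho u^2$ with $(\rho u)^2/\rho$ requires $U$ to be constant on intervals where $X(t,\cdot)$ is constant; here $\partial_wX\ge1$ makes $X(t,\cdot)$ injective, so $u=U\circ X^{-1}$ is well defined and this is automatic.

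\textbf{Constraint.} Positivity $p\ge0$ is immediate from $P\ge0$, and $0\le\rho\le1$ is Lemma~\ref{lem:density}. For $(1-\rho)p=0$, injectivity gives $\rho(t,X(t,w))=1/\partial_wX(t,w)$. The constraint therefore reads $\langle p,(1-\rho)\zeta\rangle=\langle P,(1-1/\partial_wX)\zeta(t,X)\rangle=0$, again by the exclusion relation. The Oleinik bound follows in the same way: $\partial_xu\circ X=\partial_wU/\partial_wX<1/t$ by~\eqref{eq:Oleinik-Lagr}, read in $\mathcal D'$ through the push-forward.

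\textbf{Main obstacle.} The hard part is justifying these products of the measure $P$ with the merely BV/$L^\infty$ functions $\partial_wX$ and $1/\partial_wX$. The exclusion relation was only established in the weak form $\langle\partial_wP,(X-w)\Psi\rangle+\langle P,(X-w)\partial_w\Psi\rangle=0$. I would therefore not manipulate $\partial_wX\cdot P$ pointwise. Instead I would rewrite every pairing in that divergence form, with $\Psi=\partial_x\zeta(t,X)$ regularized in $w$, and pass to the limit in the regularization. That limit is controlled by $\partial_wP=-\partial_tU$, $U\in L^\infty$, and the uniform continuity of $X$.
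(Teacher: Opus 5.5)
Your route is essentially the paper's: test against $\zeta(t,X(t,w))$, push forward, use the exclusion relation to replace $\partial_w X$ by $1$ under $P$, take the representative $\rho(t,X)=1/\partial_wX$, and transfer the Oleinik bound through $x=X(t,w)$. Your treatment of the mass equation, of $\rho u^2$ via injectivity of $X(t,\cdot)$, of the constraint and of the Oleinik step is at the paper's level of rigor. The paper simply asserts that $P$ is carried by $\{\partial_wX=1\}$, so your caution there is not a defect relative to it. There is, however, a genuine gap in the momentum step, located at the Lagrangian boundary $w\in\{0,1\}$.

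The function $\psi=\zeta(t,X(t,w))$ does not vanish at $w=0$ or $w=1$. But $\partial_tU+\partial_wP=0$ only holds in $\mathcal D'((0,T)\times(0,1))$, and \eqref{eq:PDE-Lagr-2} contains no boundary condition on $P$ (this is exactly the point of the remark following Proposition~\ref{prop:2ndorder-LagrPDE}). So $\iint U\,\partial_t\psi\,dw\,dt+\langle P,\partial_w\psi\rangle=0$ does not follow, and it is false for general solutions of \eqref{eq:PDE-Lagr-2}. Take $X=w-t^2/2$, $U=-t$, $P=w\,dt\,dw$. This triple satisfies \eqref{eq:PDE-Lagr-2} and the Oleinik bound, and $P=\partial_t\Lambda$ with $\Lambda=tw\in L^\infty_tW^{1,\infty}_w$. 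Yet $\int\rho u\,dx=-t$ is not conserved although $p$ is compactly supported, so the Eulerian momentum equation fails; a hidden wall force acts at $w=1$, where $\Lambda(t,1)=t\neq0$. Your argument uses only properties this example shares, so it cannot be complete.

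Invoking the primitive $\Lambda$ for regularity is not enough. What is needed is what the paper uses: the first-order relation $U=U^0-\partial_w\Lambda$ from \eqref{eq:Lagr-1}, together with $\Lambda(t,0)=\Lambda(t,1)=0$. These give $\iint U\,\partial_t\psi=-\iint\partial_w\Lambda\,\partial_t\psi=\iint\Lambda\,\partial_t\partial_w\psi$ with no boundary terms, after which $P=\partial_t\Lambda$ applies. The same relation, with $U^0$ independent of time, is also what produces the initial momentum $\int\phi(0,x)\rho^0u^0\,dx$ when testing with $\phi\in\mathcal C^\infty_c([0,T)\times\R)$, as the paper does. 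Your test functions are compactly supported in $(0,T)$, so they drop the initial data, and \eqref{eq:PDE-Lagr-2} alone could not recover it. Finally, the boundary issue reappears in your regularization plan: the weak exclusion relation is only available for $\Psi\in\mathcal C^1_c((0,T)\times(0,1))$, whereas $\Psi=\partial_x\zeta(t,X)$ does not vanish at $w=0,1$.
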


\medskip
\begin{rmk}
	One may equivalently write $u(t,x)=U(t,W(t,x))$, $p = \partial_t \Lambda(t,W(t,x))$ where $W$ is the generalized inverse of $X$, but this representation is not needed in the weak formulation.
\end{rmk}

\medskip
\begin{proof}
	Let $\phi \in \mathcal C_c^\infty([0,T)\times\mathbb{R})$.
	
	\medskip
	
	\noindent
	\textit{Mass equation.}
	Using the definition of $\rho$ as a push-forward, we write
	\begin{align*}
	\int_\mathbb{R} \phi(0,x)\,\rho^0(dx)
	&= - \int_0^T \frac{d}{dt} \int_\mathbb{R} \phi(t,x)\,\rho(t,x)\,dx\,dt \\
	&= - \int_0^T \frac{d}{dt} \int_0^1 \phi(t,X(t,w))\,dw\,dt.
	\end{align*}
	Differentiating under the integral sign, we obtain
	\begin{align*}
	\int_\mathbb{R} \phi(0,x)\,\rho^0(x)\, dx
	&= - \int_0^T \int_0^1 \big(
	\partial_t \phi(t,X(t,w))
	+ U(t,w)\,\partial_x \phi(t,X(t,w))
	\big)\,dw\,dt \\
	&= - \int_0^T \int_\mathbb{R}
	\big(\partial_t \phi + u\,\partial_x \phi\big)(t,x)\,\rho(t,dx)\,dt.
	\end{align*}
	This proves the mass equation in the weak sense.
	
	\medskip
	
	\noindent
	\textit{Momentum equation.}
	Similarly,
	\begin{align*}
	\int_\mathbb{R} \phi(0,x)\,u^0(x)\,\rho^0(x)\, dx
	&= \int_0^1 \phi(0,X(0,w))\,U^0(w)\,dw \\
	&= - \int_0^T \frac{d}{dt} \int_0^1 \phi(t,X(t,w))\,U^0(w)\,dw\,dt.
	\end{align*}
	Proceeding as before,
	\begin{align*}
	\int_\mathbb{R} \phi(0,x)\,u^0(x)\,\rho^0(x)\,dx
	& = - \int_0^T \int_0^1	\big(\partial_t \phi + U\,\partial_x \phi\big)(t,X(t,w))\,U^0(w)\,dw\,dt.
	\end{align*}
	Recall that $(X,\Lambda)$ satisfies the first-order Lagrangian system~\eqref{eq:Lagr-1},
	so that
	\[
	U = \partial_t X = U^0 - \partial_w \Lambda.
	\]
	we obtain
	\begin{align*}
	\int_\mathbb{R} \phi(0,x)\,u^0(x)\,\rho^0(x)\, dx
	&= - \int_0^T \int_0^1 \big(\partial_t \phi + U\,\partial_x \phi\big)(t,X(t,w))\,U(t,w)\,dw\,dt \\
	&\quad	- \int_0^T \int_0^1	\big(\partial_t \phi + U\,\partial_x \phi\big)(t,X(t,w))\,\partial_w \Lambda(t,w)\,dw\,dt.
	\end{align*}
	The first term gives
	\[
	- \int_0^T \int_\mathbb{R}
	\big(\partial_t \phi(t,x) + u(t,x)\,\partial_x \phi(t,x)\big)\,u(t,x)\,\rho(t,x) \ dxdt.
	\]
	For the second term, we have
	\begin{align*}
	& -\int_0^T \int_0^1	\big(\partial_t \phi + U\,\partial_x \phi\big)(t,X(t,w))\,\partial_w \Lambda(t,w)\ dw dt \\
	& = -\int_0^T \int_0^1	\dfrac{d}{dt} \big(\phi(t,X(t,w))\big) \,\partial_w \Lambda(t,w)\ dw dt \\
	& =  - \langle \partial_x \phi , p \rangle_{\mathcal{D}'_{t,x},\mathcal{D}_{t,x}}.
	\end{align*}
    using the definition of $p$.
	This proves the momentum equation.
	
	\medskip

	\noindent
	\textit{Complementary relation.}~
	By Lemma~\ref{lem:density}, the Eulerian density $\rho$ admits a bounded Borel representative, still denoted by $\rho$, satisfying
	\[
	0 \leq \rho \leq 1	\qquad \text{a.e. on } (0,T)\times\mathbb{R}.
	\]
	In general, the product of a bounded function with a Radon measure depends on the choice of the representative. 
	In the present framework, the Lagrangian structure allows us to select a canonical representative of $\rho$.
	Indeed, recalling that
	\[
	\rho(t,\cdot) = X(t,\cdot)_{\#}\mathcal L^1_{(0,1)},
	\]
	we can choose $\rho$ so that
	\[
	\rho(t,X(t,w)) = \frac{1}{\partial_w X(t,w)}
	\qquad \text{for a.e. } (t,w)\in (0,T)\times(0,1).
	\]
	In particular, on the congested set
	\[
	\Omega := \{(t,w)\in (0,T)\times(0,1)\ ; \ \partial_w X(t,w)=1\},
	\]
	we have
	\[
	\rho(t,X(t,w)) = 1
	\qquad \text{for a.e. } (t,w)\in \Omega.
	\]
	On the other hand, by construction, the Lagrangian pressure $P$ is supported on $\Omega$, and the Eulerian pressure is defined by
	\[
	p = (\mathrm{Id},X)_{\#} P.
	\]
	With this choice of representative, the product $(1-\rho)p$ is well defined	as a Radon measure, and for any $\phi \in C_c^\infty((0,T)\times\mathbb{R})$, we have
	\begin{align*}
	\langle (1-\rho)p,\phi\rangle_{\mathcal{D}'_{t,x},\mathcal{D}_{t,x}}
	& = \int_{(0,T)\times\mathbb R} (1-\rho(t,x))\,\phi(t,x)\,p(dt,dx) \\
	& = \int_{(0,T)\times(0,1)}	(1-\rho(t,X(t,w)))\,\phi(t,X(t,w))\,P(dt,dw).
	\end{align*}
	Since $P$ is supported on $\Omega$ and $\rho(t,X(t,w))=1$ on $\Omega$, the integrand vanishes $P$-almost everywhere, and therefore
	\[
	\langle (1-\rho)p,\phi\rangle_{\mathcal{D}'_{t,x},\mathcal{D}_{t,x}} = 0.
	\]
	We conclude that
	\[
	(1-\rho)p = 0
	\quad \text{in} \quad \mathcal{D}'((0,T)\times\mathbb R).
	\]

	\medskip
	
	\noindent
	\textit{Oleinik inequality.}
	We use the Lagrangian Oleinik inequality~\eqref{eq:Oleinik-Lagr}, which states that for any
	$\phi \in \mathcal C_c^\infty((0,T)\times(0,1))$, $\phi \ge 0$,
	\[
	-\int_0^T \int_0^1 U(t,w)\,\partial_w \phi(t,w)\,dw\,dt
	< \int_0^T \int_0^1 \frac{1}{t}\,\phi(t,w)\,\partial_w X(t,w)\,dw\,dt.
	\]
	Let $\varphi \in \mathcal C_c^\infty((0,T)\times\mathbb R)$, $\varphi \ge 0$, and define
	\[
	\phi(t,w) := \varphi(t,X(t,w)).
	\]
	Then
	\[
	\partial_w \phi(t,w)
	= \partial_x \varphi(t,X(t,w))\,\partial_w X(t,w).
	\]
	Substituting into the previous inequality yields
	\begin{align*}
	- \int_0^T \int_0^1 U(t,w)\,\partial_x \varphi(t,X(t,w))\,\partial_w X(t,w)\,dw dt
	< \int_0^T \int_0^1 \frac{1}{t}\,\varphi(t,X(t,w))\,\partial_w X(t,w)\,dw dt.
	\end{align*}
	Using the change of variables induced by $X$, we obtain
	\begin{align*}
	- \int_0^T \int_{\mathbb R} u(t,x)\,\partial_x \varphi(t,x)\,dx\,dt
	< \int_0^T \int_{\mathbb R} \frac{1}{t}\,\varphi(t,x)\,dx\,dt.
	\end{align*}
	This proves the Eulerian Oleinik inequality.
\end{proof}

\appendix
\section{Karush-Kuhn-Tucker theory}\label{app:KKT}

In this appendix, we recall a classical result from convex optimization,
which provides a direct characterization of constrained minimizers via
Lagrange multipliers.

\begin{thm}[Karush--Kuhn--Tucker conditions, \cite{boyd2004} Chapter 5]
	Let $f:\mathbb{R}^N \to \mathbb{R}$ be a convex and continuously differentiable function, and let $g_i:\mathbb{R}^N \to \mathbb{R}$, $i=1,\dots,m$, be convex functions.
	Consider the constrained minimization problem
	\begin{equation}\label{eq:KKT-problem}
	\min_{\bx \in \mathbb{R}^N} f(x)
	\quad \text{subject to} \quad
	g_i(x) \le 0 \quad \text{for } i=1,\dots,m.
	\end{equation}
	Assume that the feasible set
	\[
	\mathcal{K} := \{\bx \in \mathbb{R}^N ; \ g_i(\bx) \le 0 ~ \ \forall \ i\}
	\]
	is nonempty and that the Slater condition holds, i.e. there exists $\bar \bx \in \mathcal{K}$ such that
	\[
	g_i(\bar \bx) < 0 \quad \text{for all } i.
	\]
	Then $\bx^\star \in K$ is a solution to~\eqref{eq:KKT-problem} if and only if there exist multipliers $\lambda_i \geq 0$, $i=1,\dots,m$, such that
	\begin{subequations}\label{eq:KKT}
		\begin{align}
		\nabla f(\bx^\star) + \sum_{i=1}^m \lambda_i \nabla g_i(\bx^\star) & = 0, \label{eq:KKT-stationarity}\\
		g_i(\bx^\star) &\leq 0, \label{eq:KKT-primal}\\
		\lambda_i &\geq 0, \label{eq:KKT-dual}\\
		\lambda_i\, g_i(\bx^\star) &= 0 \quad \text{for all } i. \label{eq:KKT-complementarity}
		\end{align}
	\end{subequations}
\end{thm}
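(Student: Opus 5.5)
The statement is the classical KKT theorem for convex programs with Slater's condition. The plan is to prove the two implications separately, using the subdifferential and normal-cone calculus already set up in the paper (Lemma~\ref{lem:proj-characterization} and the description of $\partial I_{K^N}$).

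\emph{Sufficiency.} Assume multipliers $\lambda_i\geq 0$ satisfy \eqref{eq:KKT}. Consider the Lagrangian $\mathcal L(\bx)=f(\bx)+\sum_i\lambda_i g_i(\bx)$. It is convex because $\lambda_i\ge0$, and by \eqref{eq:KKT-stationarity} it is stationary at $\bx^\star$. If some $g_i$ are not differentiable, read stationarity as $0\in\partial f(\bx^\star)+\sum_i\lambda_i\partial g_i(\bx^\star)$, which covers the affine constraints used in the paper. Hence $\bx^\star$ minimizes $\mathcal L$ over $\R^N$. For any feasible $\bx$, using $\lambda_ig_i(\bx)\le0$ and complementarity \eqref{eq:KKT-complementarity},
\[
f(\bx)\ge \mathcal L(\bx)\ge\mathcal L(\bx^\star)=f(\bx^\star).
\]
So $\bx^\star$ is a minimizer.

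\emph{Necessity.} This is the substantive direction, and I will argue by convex separation. Let $\bx^\star$ be a minimizer and let $p^\star=f(\bx^\star)$. Define the set
\[
\mathcal A=\{(s,v)\in\R^m\times\R;\ \exists\,\bx:\ g_i(\bx)\le s_i,\ f(\bx)\le v\}.
\]
It is convex by convexity of $f$ and the $g_i$. By optimality, the point $(0,p^\star)$ cannot be interior to $\mathcal A$: the open set $\{(s,v);\ s<0,\ v<p^\star\}$ is convex and disjoint from $\mathcal A$. The separating hyperplane theorem then gives $(\mu,\mu_0)\neq 0$ with
\[
\mu\cdot s+\mu_0 v\ge \mu_0p^\star \quad \text{on } \mathcal A.
\]
Since $\mathcal A$ is upward closed, $\mu\ge0$ and $\mu_0\ge0$.

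The main obstacle is to rule out $\mu_0=0$, and this is where Slater's condition enters. If $\mu_0=0$, evaluating the inequality at $(g(\bar\bx),f(\bar\bx))$ gives $\sum_i\mu_ig_i(\bar\bx)\ge0$. Since every $g_i(\bar\bx)<0$, this forces $\mu=0$, a contradiction.

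Having $\mu_0>0$, set $\lambda=\mu/\mu_0$. Then $f(\bx)+\sum_i\lambda_ig_i(\bx)\ge p^\star$ for all $\bx$. Taking $\bx=\bx^\star$ gives $\sum_i\lambda_ig_i(\bx^\star)\ge0$. Each term is $\le 0$, so every term vanishes, which is complementarity \eqref{eq:KKT-complementarity}. Consequently $\bx^\star$ is an unconstrained minimizer of the Lagrangian $f+\sum_i\lambda_ig_i$. Fermat's rule, with the subdifferential sum rule (valid since $f$ is differentiable and the $g_i$ are finite convex functions), then yields \eqref{eq:KKT-stationarity}. Primal feasibility \eqref{eq:KKT-primal} and dual feasibility \eqref{eq:KKT-dual} hold by construction.
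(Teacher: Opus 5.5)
Your proof is correct. The paper gives no proof of its own and simply quotes the result from Boyd--Vandenberghe, Chapter~5. Your argument is essentially the standard proof found there: sufficiency from convexity of the Lagrangian together with complementarity; necessity by separating the convex set $\mathcal A$ from the open set below $(0,p^\star)$, using Slater's condition to force $\mu_0>0$, then reading off complementarity and applying Fermat's rule. The one cosmetic point is that your opening sentence announces the paper's projection lemma and the normal cone of $K^N$, which your argument never actually uses.
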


\medskip

We apply this result to the projection problem defining the microscopic dynamics.
For a given $t \geq 0$, consider
\[
\min_{\bx \in \mathcal{K}} \frac{1}{2} \big\|\bx - (\bx^0 + t \bu^0)\big\|^2,
\]
where
\[
\mathcal{K} = K^N = \{\bx \in \mathbb{R}^N : x_{i+1} - x_i \geq 2r, \ i=1,\dots,N-1\}.
\]
This problem is convex with affine constraints. 
In our setting, the Slater condition is satisfied. Indeed, one may consider for instance
\[
\bar x_i := i\,(2r+\delta), \qquad i=1,\dots,N,
\]
for some $\delta>0$, so that
\[
\bar x_{i+1} - \bar x_i = 2r+\delta > 2r.
\]
Therefore, the KKT conditions apply and yield the existence of multipliers $\lambda_i(t) \geq 0$ such that
\begin{align*}
\bx(t) - (\bx^0 + t \bu^0) +  N \sum_{i=1}^{N-1} \lambda_i(t) (e_i - e_{i+1}) 
& = 0,\\
\lambda_i(t)\,\big(x_{i+1}(t) - x_i(t) - 2r\big) 
& = 0.
\end{align*}
This provides directly the multipliers and the Signorini condition introduced in the main text.

\section{Technical lemmas}

\begin{lem}[Regularity of the Eulerian density]\label{lem:density-app}
	Let $X \in W^{1,\infty}((0,T)\times(0,1))$ be such that, for a.e. $t\in(0,T)$,
	the map $w \mapsto X(t,w)$ is nondecreasing and satisfies
	\[
	\partial_w X(t,w)\ge 1
	\qquad \text{for a.e. } w\in(0,1).
	\]
	Define
	\begin{equation}\label{df:rho-app}
	\rho(t,\cdot):=X(t,\cdot)_\# \mathcal L^1_{(0,1)}.
	\end{equation}
	Then the following properties hold:
	
	\begin{enumerate}
		\item The curve $t\mapsto \rho(t)$ belongs to $AC([0,T];\mathcal P_2(\mathbb R))$.
		
		\item There exists a bounded Borel function, still denoted by $\rho$, such that
		\[
		\rho(dt,dx)=\rho(t,x)\,dt\,dx,
		\qquad 0\le \rho(t,x)\le 1
		\quad \text{for a.e. } (t,x)\in(0,T)\times\mathbb R.
		\]
	\end{enumerate}
\end{lem}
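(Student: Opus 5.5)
For (1), I will work with the $L^2$ representation of the Wasserstein distance in one dimension. Because $X(t,\cdot)$ is nondecreasing, it is the monotone rearrangement of $\rho(t)$. For any $s<t$ the coupling $(X(s,\cdot),X(t,\cdot))_\#\mathcal L^1_{(0,1)}$ is admissible (in fact optimal), so
\[
W_2(\rho(s),\rho(t)) \le \|X(t,\cdot)-X(s,\cdot)\|_{L^2(0,1)} \le \int_s^t \|\partial_t X(\tau,\cdot)\|_{L^2(0,1)}\,d\tau \le \|\partial_t X\|_{L^\infty}\,|t-s|.
\]
This bound uses that $X\in W^{1,\infty}$, so $t\mapsto X(t,\cdot)\in L^2(0,1)$ is Lipschitz. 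Finite second moments follow from the boundedness of $X$, since $\int |x|^2\rho(t,dx)=\int_0^1 |X(t,w)|^2dw$. This gives $\rho\in \mathrm{Lip}([0,T];\mathcal P_2(\R))\subset AC$.

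For (2), I will first argue at fixed $t$ for which $\partial_w X(t,\cdot)\ge1$ a.e. Since $X(t,\cdot)$ is Lipschitz with $\partial_w X\ge 1$, it satisfies $X(t,w_2)-X(t,w_1)\ge w_2-w_1$ for $w_1<w_2$. Therefore it is a bi-Lipschitz homeomorphism onto the interval $[X(t,0),X(t,1)]$, and its inverse $W(t,\cdot)$ is $1$-Lipschitz. For any Borel set $A$ this gives $\rho(t)(A)=\mathcal L^1(X(t,\cdot)^{-1}(A))=\mathcal L^1(W(t,A\cap I_t))\le \mathcal L^1(A)$. Hence $\rho(t)\ll\mathcal L^1$ with density $\rho(t,x)=\partial_x W(t,x)\in[0,1]$, and by the area formula the density equals $1/\partial_w X(t,W(t,x))$ on $I_t$ and $0$ outside. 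This also justifies the canonical representative used later in the paper.

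The main obstacle I expect is joint measurability in $(t,x)$, i.e.\ producing a single bounded Borel function $\rho(t,x)$ with $\rho(dt,dx)=\rho(t,x)\,dt\,dx$. I will first identify the space-time measure $\rho(dt,dx):=dt\otimes\rho(t,dx)$; this is well defined because $t\mapsto\int\varphi\,d\rho(t)=\int_0^1\varphi(X(t,w))dw$ is continuous. For $\varphi\ge0$ Borel on $(0,T)\times\R$, the slicewise bound gives $\int\!\!\int\varphi\,d\rho\le\int\!\!\int\varphi\,dt\,dx$. Hence the measure is absolutely continuous with respect to $\mathcal L^2$ with Radon--Nikodym density in $[0,1]$, which can be chosen Borel. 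Alternatively, I will define $\rho(t,x):=\limsup_{h\to0}\frac1{2h}\int_0^1\mathbbm 1_{|X(t,w)-x|<h}\,dw$. This is a Borel function of $(t,x)$, since it is a countable limsup of functions continuous in $t$ and lower semicontinuous in $x$. By Lebesgue differentiation it coincides for a.e.\ $t$ with the slice density a.e.\ in $x$, and it can be truncated to $[0,1]$ on the exceptional null set.
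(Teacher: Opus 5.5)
Your proposal is correct and follows the paper's proof. Part (1) uses the same monotone coupling $(X(s,\cdot),X(t,\cdot))_\#\mathcal L^1_{(0,1)}$, and part (2) uses the same slice bound $\rho(t)(A)\le|A|$, integrated in time, followed by a Radon--Nikodym argument on the space-time measure $dt\otimes\rho(t,dx)$, which is exactly the paper's $(\mathrm{Id},X)_\#(\mathcal L^1_{(0,T)}\otimes\mathcal L^1_{(0,1)})$. The only variation is that you get the slice bound for all Borel sets at once via the $1$-Lipschitz inverse $W(t,\cdot)$, where the paper proves it on intervals and then approximates; as a bonus, this yields the identification $\rho(t,\cdot)=1/\partial_w X(t,W(t,\cdot))$, which the paper uses later without proof.
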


\begin{proof}
	\begin{itemize}
		\item {\it Absolute continuity in time with values in $\mathcal P_2(\mathbb R)$.}
		Let $0\le s<t\le T$. Consider the transport plan
		\[
		\gamma_{s,t}:=(X(s,\cdot),X(t,\cdot))_\# \mathcal L^1_{(0,1)}.
		\]
		By construction, $\gamma_{s,t}$ is a coupling between $\rho(s)$ and $\rho(t)$ (see for instance~\cite{santambrogio2015}).
		Therefore,
		\[
		W_2(\rho(s),\rho(t))^2
		\le \int_0^1 |X(t,w)-X(s,w)|^2\,dw.
		\]
		Since $\partial_t X \in L^\infty((0,T)\times(0,1))$, we have for a.e. $w\in(0,1)$
		\[
		X(t,w)-X(s,w)=\int_s^t \partial_t X(\tau,w)\,d\tau.
		\]
		Hence,
		\[
		\|X(t,\cdot)-X(s,\cdot)\|_{L^2(0,1)}
		\le \int_s^t \|\partial_t X(\tau,\cdot)\|_{L^2(0,1)}\,d\tau,
		\]
		and thus
		\[
		W_2(\rho(s),\rho(t))
		\le \int_s^t \|\partial_t X(\tau,\cdot)\|_{L^2(0,1)}\,d\tau.
		\]
		This proves that $t\mapsto \rho(t)$ belongs to $AC([0,T];\mathcal P_2(\mathbb R))$.
		
		\medskip
		
		\item \textit{Existence of a bounded Borel representative.}
		For a.e. fixed $t\in(0,T)$, define
		\[
		\mu_t:=X(t,\cdot)_\#\mathcal L^1_{(0,1)}=\rho(t,\cdot).
		\]
		We show that $\mu_t \ll \mathcal L^1_{\mathbb R}$ and that its density is bounded
		by $1$.
		
		Let $I=(a,b)\subset \mathbb R$ be an open interval. Since $X(t,\cdot)$ is nondecreasing,
		the preimage $X(t,\cdot)^{-1}(I)$ is an interval (possibly empty), say $(\alpha,\beta)\subset(0,1)$.
		Using the absolute continuity of $X(t,\cdot)$ and the bound $\partial_w X(t,\cdot)\ge 1$, we get
		\[
		b-a \ge X(t,\beta)-X(t,\alpha)
		= \int_\alpha^\beta \partial_w X(t,w)\,dw
		\ge \beta-\alpha.
		\]
		Therefore,
		\[
		\mu_t(I)=\mathcal L^1\bigl(X(t,\cdot)^{-1}(I)\bigr)\le |I|.
		\]
		By approximation, the same inequality holds for every Borel set $A\subset\mathbb R$:
		\[
		\mu_t(A)\le |A|.
		\]
		Hence $\mu_t \ll \mathcal L^1_{\mathbb R}$ and its Radon--Nikodym density,
		still denoted by $\rho(t,\cdot)$, satisfies
		\[
		0\le \rho(t,\cdot)\le 1
		\qquad \text{for a.e. } t\in(0,T).
		\]
		We now pass to space-time. Define
		\[
		\mu := (\mathrm{Id},X)_\#\bigl(\mathcal L^1_{(0,T)}\otimes \mathcal L^1_{(0,1)}\bigr)
		\quad \text{on } (0,T)\times\mathbb R.
		\]
		Then for every Borel set $B\subset (0,T)\times\mathbb R$,
		\[
		\mu(B)=\int_0^T \mu_t(B_t)\,dt,
		\qquad
		B_t:=\{x\in\mathbb R:\ (t,x)\in B\}.
		\]
		Using the previous estimate slice by slice, we find
		\[
		\mu(B)\le \int_0^T |B_t|\,dt = |B|.
		\]
		Thus $\mu \ll \mathcal L^1_{(0,T)\times\mathbb R}$, and by the Radon--Nikodym theorem
		there exists a bounded Borel function, still denoted by $\rho$, such that
		\[
		\mu(dt,dx)=\rho(t,x)\,dt\,dx,
		\qquad 0\le \rho(t,x)\le 1
		\quad \text{for a.e. } (t,x)\in(0,T)\times\mathbb R.
		\]
	\end{itemize}
\end{proof}

\section{Uniqueness issues at the macroscopic level}\label{app:uniqueness}

\subsection{First-order Lagrangian system}

\begin{prop}[Monotone inclusion and uniqueness for \eqref{eq:Lagr-1}]
	Let $(X,\Lambda)$ be a solution to \eqref{eq:Lagr-1}, and define
	\[
	\xi := \partial_w \Lambda.
	\]
	Then, for a.e. $t\in(0,T)$, one has
	\[
	\xi(t)\in \partial I_K(X(t)),
	\qquad
	K := \{Y\in L^2(0,1);\  \partial_w Y \ge 1 ~ \text{in the sense of distributions} \}.
	\]
	Equivalently,
	\[
	\partial_t X(t) + \partial I_K(X(t)) \ni U^0.
	\]
	As a consequence, \eqref{eq:Lagr-1} admits at most one solution.
\end{prop}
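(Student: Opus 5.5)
The plan is to verify the subdifferential inclusion directly from the definition, using the complementarity relations in \eqref{eq:Lagr-1}. Uniqueness will then follow from the monotonicity of $\partial I_K$, by the same energy argument as in the microscopic uniqueness proof of Proposition~\ref{prop:proj-micro}.

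\textbf{Step 1: the inclusion.} Fix $t$ outside a null set, chosen so that $\Lambda(t)\in W^{1,\infty}(0,1)$, $\Lambda(t)\ge 0$, $\Lambda(t,0)=\Lambda(t,1)=0$, $\partial_w X(t)\ge 1$ and $(\partial_w X(t)-1)\Lambda(t)=0$ a.e. First, $X(t)\in K$, since $\partial_w X(t)\ge 1$ and $X(t)\in L^\infty\subset L^2$. Next, let $Y\in K$. The condition $\partial_w Y\ge 1$ in $\mathcal D'$ means that $Y-w$ is nondecreasing, so $\partial_w Y$ is a nonnegative Radon measure on $(0,1)$ with $\partial_w Y-1\ge 0$. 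I aim to show
\[
\int_0^1 \partial_w\Lambda\,(Y-X)\,dw \le 0 .
\]
Formally, integrating by parts and using $\Lambda(0)=\Lambda(1)=0$ gives
\[
\int_0^1 \partial_w\Lambda\,(Y-X)\,dw=-\int_0^1 \Lambda\,(\partial_w Y-1)+\int_0^1\Lambda\,(\partial_w X-1)\,dw .
\]
The last integral vanishes by complementarity. The first is $\le 0$ because $\Lambda\ge0$ is continuous and $\partial_w Y-1$ is a nonnegative measure. This gives $\xi(t)=\partial_w\Lambda(t)\in\partial I_K(X(t))$. Combining this with $\partial_t X=U^0-\partial_w\Lambda$ yields the equivalent form $\partial_t X+\partial I_K(X)\ni U^0$.

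\textbf{Main obstacle: justifying the integration by parts.} A nondecreasing $Y\in L^2(0,1)$ may be unbounded near $w=0,1$, so the boundary terms are not immediate. I would first integrate on $(\varepsilon,1-\varepsilon)$, where $Y$ is BV and bounded, which is legitimate since $\Lambda$ is Lipschitz. This produces the boundary terms $\Lambda(1-\varepsilon)(Y-X)(1-\varepsilon)-\Lambda(\varepsilon)(Y-X)(\varepsilon)$. Since $|\Lambda(\varepsilon)|\le C\varepsilon$, it suffices that $\varepsilon\,|Y(\varepsilon)|\to0$. This holds along a sequence $\varepsilon_k\to0$ because $Y\in L^2$: otherwise $|Y(\varepsilon)|\ge c/\varepsilon$ near $0$, which contradicts integrability (monotonicity even gives the limit along all $\varepsilon$); the same applies at $w=1$. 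The term $\int\Lambda\,(\partial_wY-1)$ on $(\varepsilon,1-\varepsilon)$ is a nonnegative integrand, so it is handled by monotone convergence, possibly with value $+\infty$, which only strengthens the inequality.

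\textbf{Step 2: uniqueness.} Let $(X^1,\Lambda^1)$ and $(X^2,\Lambda^2)$ be two solutions with the same initial datum $X^0$. Since $X^a\in W^{1,\infty}$, the map $t\mapsto \|X^1-X^2\|_{L^2}^2$ is absolutely continuous, and
\[
\tfrac12\tfrac{d}{dt}\|X^1-X^2\|_{L^2}^2=-\int_0^1(\xi^1-\xi^2)(X^1-X^2)\,dw\le0 .
\]
Here the sign comes from adding the two inequalities of Step 1: test $\xi^1$ with $Y=X^2$ and $\xi^2$ with $Y=X^1$. Hence $X^1=X^2$. Then $\partial_w\Lambda^1=\partial_w\Lambda^2$, and the boundary condition $\Lambda^a(t,0)=0$ gives $\Lambda^1=\Lambda^2$.
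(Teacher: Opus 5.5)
Your proposal is correct and follows the paper's route. You integrate $\partial_w\Lambda\,(Y-X)$ by parts using $\Lambda(t,0)=\Lambda(t,1)=0$, conclude with $\Lambda\ge 0$, $\partial_w Y\ge 1$ and $(\partial_w X-1)\Lambda=0$, and then get uniqueness from the monotonicity of $\partial I_K$ via the $L^2$ energy estimate. Your extra care with the boundary terms for possibly unbounded $Y\in K$ (via $\varepsilon|Y(\varepsilon)|\to 0$ and $\partial_w Y-1$ being a nonnegative measure), and your recovery of $\Lambda^1=\Lambda^2$ from the boundary condition, correctly fill in points the paper leaves implicit.
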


\begin{proof}
	Let $Y\in K$. Using the boundary conditions $\Lambda(t,0)=\Lambda(t,1)=0$, we compute
	\[
	\int_0^1 \partial_w\Lambda(t,w)\,(Y(w)-X(t,w))\ dw
	= -\int_0^1 \Lambda(t,w)\,\partial_w(Y-X)(w)\ dw.
	\]
	Since $\Lambda \geq 0$, $\partial_w Y \geq 1$, and
	\[
	(\partial_w X - 1)\Lambda=0,
	\]
	we obtain
	\[
	-\Lambda\,\partial_w(Y-X)
	= \Lambda(\partial_w X-\partial_w Y)
	= \Lambda(1-\partial_w Y) 
	\leq 0.
	\]
	Therefore
	\[
	\int_0^1 \partial_w\Lambda(t,w)\,(Y(w)-X(t,w))\,dw \leq 0,
	\]
	which proves that $\partial_w\Lambda(t)\in \partial I_K(X(t))$.
	
	Now let $X^1,X^2$ be two solutions with the same initial datum $X^0$, and set $\xi^a:=\partial_w\Lambda^a\in \partial I_K(X^a)$, $a=1,2$.
	Subtracting the two equations yields
	\[
	\partial_t(X^1-X^2) + (\xi^1 - \xi^2) = 0 .
	\]
	Multiplying by $X^1 - X^2$ and integrating over $(0,1)$, we get
	\[
	\frac{1}{2} \frac{d}{dt} \|X^1-X^2\|_{L^2(0,1)}^2
	= - \int_0^1 \big(\xi^1 - \xi^2 \big) \big( X^1-X^2 \big) \ dw \leq 0,
	\]
	by the monotonicity of $\partial I_K$. 
	Since $X^1(0) = X^2(0) = X^0$, we conclude that	$X^1 = X^2$ a.e..
\end{proof}


\subsection{Non-uniqueness of the second-order dynamics}

In contrast with the first-order dynamics studied in the previous subsection,
which is well-posed thanks to its underlying monotone structure, the
second-order system~\eqref{eq:PDE-Lagr-2}
\[
\begin{cases}
\partial_t X = U,\\
\partial_t U + \partial_w P = 0,\\
\partial_w X \geq 1, \quad (\partial_w X-1)P=0,\quad P\geq0,
\end{cases}
\]
does not admit a unique solution in general. This lack of uniqueness persists
even if one strengthens the formulation by imposing additional regularity on
the pressure $P$ (so that boundary values $P_{|w=0}=P_{|w=1}=0$ are well-defined)
and by enforcing the Oleinik-type condition~\eqref{eq:Oleinik-Lagr}.
The key difference with the first-order formulation lies in the absence of an
explicit monotone structure or projection principle at the level of
\eqref{eq:PDE-Lagr-2}, which prevents any direct contraction argument.

\medskip
To illustrate this phenomenon, we consider the initial datum
\[
X^0(w) = w \mathbf{1}_{[0,1/2)}(w) + (w+ \eta)\mathbf{1}_{[1/2,1]}(w),
\qquad
U^0(w) = \mathbf{1}_{[0,1/2)} - \mathbf{1}_{[1/2,1]},
\]
which corresponds to two blocks moving towards each other and colliding
in a time of order $\eta>0$.
The free evolution before the collision is given by
\[
X^{\mathrm f}(t,w) = X^0(w) + t U^0(w).
\]
The collision occurs at time $t^* = \eta/2$, at which point one has
\[
X^{\mathrm f}(t^*,w) = w + \frac{\eta}{2}.
\]
After the collision time, several continuations are possible.

\begin{itemize}
	\item A first solution is given by
	\begin{align*}
	X^1(t,w) &= X^{\mathrm f}(t^*,w) = w + \frac{\eta}{2}, \\
	U^1(t,w) &= 0,
	\hspace{7cm} \forall \ t \geq t^*,\\
	P^1(t,w) &= \Big[w \mathbf{1}_{[0,1/2)}(w) - (1-w)\mathbf{1}_{[1/2,1]}(w)\Big]\,
	\delta_{t=t^*},
	\end{align*}
	which corresponds to a perfectly inelastic collision, where the two blocks
	stick together.
	
	\item A second solution exhibits a kind of rebound. For instance, we define
	\[
	U^2(t,w) = 2w-1, \qquad \forall\ t \geq t^*,
	\]
	and set
	\begin{align*}
	X^2(t,w) &= \Big(w + \frac{\eta}{2}\Big) + (t-t^*)U^2(t,w),
	\qquad \forall \ t \geq t^*,\\
	P^2(t,w) &= \Big[(2w-w^2)\mathbf{1}_{[0,1/2)}(w) - (w^2-1)\mathbf{1}_{[1/2,1]}(w)\Big]\,
	\delta_{t=t^*}.
	\end{align*}
	
	We check that the Oleinik condition is satisfied. Indeed,
	\[
	\partial_w U^2 = 2,
	\qquad
	\partial_w X^2(t,w) = 1 + 2(t-t^*),
	\]
	so that
	\[
	\frac{\partial_w X^2(t,w)}{t}
	= \frac{1}{t} + 2\frac{t-t^*}{t}.
	\]
	Hence, taking $\eta$ small (less that $1$)  so that $t^* = \eta/2 < 1/2$, one has
	\[
	\partial_w U^2 \leq \frac{\partial_w X^2}{t}
	\qquad \text{for all } t > t^*,
	\]
	and the Oleinik condition is satisfied for all times (the inequality being trivially satisfied for small times $t < t^*$).
\end{itemize}

\bigskip

The previous example shows that the second-order system \eqref{eq:PDE-Lagr-2} does not provide, by itself, a sufficient selection mechanism to ensure uniqueness.
A natural way to restore uniqueness is to supplement \eqref{eq:PDE-Lagr-2} with a projection principle on the velocity, inspired by the discrete sticky blocks dynamics. More precisely, given the monotone rearrangement $X(t)$, we introduce the closed subspace (introduced in~\cite{perrin2018} for the study of a toy model for granular flows)
\[
\H_{X(t)} := \Big\{ V \in L^2(0,1) \ ; \ V \text{ is constant on each connected component of } \{\partial_w X(t)=1\} \Big\},
\]
which is the macroscopic counterpart of~\eqref{df:H-micro} and encodes the fact that particles belonging to the same congested region must move with the same velocity.
We then postulate the projection condition
\begin{equation}\label{eq:velocity-projection}
U(t) = \mathbb P_{\H_{X(t)}}\big(U^0\big),
\end{equation}
where $\mathbb P_{\H_{X(t)}}$ denotes the orthogonal projection in $L^2(0,1)$.
This condition enforces the rigidity of congested regions and prevents the creation of non-physical velocity gradients inside clusters. 
The projection formula~\eqref{eq:velocity-projection} uniquely determines the velocity field $U(t)$ for each given configuration $X(t)$.
Since the evolution of $X$ is governed by $\partial_t X = U$, it follows that $X$ itself is uniquely determined. 
The pressure $P$ is uniquely recovered from the momentum equation and the boundary condition.
Indeed, this relation determines $\partial_w P$ uniquely as
\[
\partial_w P = -\partial_t U.
\]
Hence $P$ is determined up to an additive function of time. This indeterminacy is removed by imposing the boundary condition $P(t,0) = P(t,1) = 0$.
As a consequence, the whole triple $(X,U,P)$ is uniquely specified.\\
Therefore, the projection principle \eqref{eq:velocity-projection} provides a natural selection criterion that restores well-posedness of the second-order
dynamics, and can be seen as the macroscopic counterpart of the sticky blocks dynamics.
This also underlines the interest of the Lagrangian viewpoint.
In contrast with the Eulerian formulation, where no simple selection criterion is available, the Lagrangian description provides a direct and explicit mechanism through
the projection principle, leading to a canonical choice of solution.

\vspace{1cm}

\centerline{\bf Acknowledgements}
\vspace{2mm}
The work of the author is supported by the ANR BOURGEONS project, grant ANR-23-CE40-0014-01 of the French National Research Agency (ANR).

\bibliography{bibli}

\end{document}